\setlist[enumerate,1]{label=\textup{(\alph*)}}
\definecolor{solarized-base03}{HTML}{002b36}
\definecolor{solarized-base02}{HTML}{073642}
\definecolor{solarized-base01}{HTML}{586e75}
\definecolor{solarized-base00}{HTML}{657b83}
\definecolor{solarized-base0}{HTML}{839496}
\definecolor{solarized-base1}{HTML}{93A1A1}
\definecolor{solarized-base2}{HTML}{EEE8D5}
\definecolor{solarized-base3}{HTML}{FDF6E3}
\definecolor{solarized-yellow}{HTML}{B58900}
\definecolor{solarized-orange}{HTML}{CB4B16}
\definecolor{solarized-red}{HTML}{DC322F}
\definecolor{solarized-magenta}{HTML}{D33682}
\definecolor{solarized-violet}{HTML}{6C71C4}
\definecolor{solarized-blue}{HTML}{268BD2}
\definecolor{solarized-cyan}{HTML}{2AA198}
\definecolor{solarized-green}{HTML}{859900}
\numberwithin{equation}{section}
\newtheorem{theorem}{Theorem}[section]
\newtheorem{remark}[theorem]{Remark}
\newtheorem*{iremark}{Remark}
\newtheorem*{noremark}{Notational Remark} %
\newtheorem*{mthm}{Main Theorem}
\newtheorem*{mresult}{Main Result}
\newtheorem{proposition}[theorem]{Proposition}
\newtheorem{corollary}[theorem]{Corollary}
\newtheorem*{ocorollary}{Corollary}
\newtheorem{lemma}[theorem]{Lemma}
\newtheorem*{isp}{Inverse spectral problem}
\newtheorem*{idp}{Inverse dynamical problem}
\newtheorem{definition}[theorem]{Definition}
\newcommand{\N}{\mathbb{N}}
\newcommand{\Z}{\mathbb{Z}}
\newcommand{\R}{\mathbb{R}}
\newcommand{\D}{\mathbb{D}}
\newcommand{\T}{\mathbb{T}}
\newcommand{\A}{\mathbb{A}}
\newcommand{\cD}{\mathcal{D}}
\newcommand{\Om}{\Omega}
\newcommand{\inv}{^{-1}}
\newcommand{\id}{\textup{Id}}
\newcommand{\cM}{\mathcal M}
\newcommand{\invo}{\mathcal I}
\newcommand{\refl}{\mathcal R}
\newcommand{\Ls}{L^1_{*,\symm}}
\newcommand{\Line}{L}
\newcommand{\eex}[1]{#1^{\dagger}}
\newcommand{\xA}{\eex A}
\newcommand{\xhA}{\eex {\hat A}}
\newcommand{\xF}{\eex F}
\newcommand{\Coc}{\Psi} 
\newcommand{\rema}{\mathcal R}  
\newcommand{\I}{\text{I}}
\newcommand{\II}{\text{II}}
\newcommand{\admX}{X_{*,\expo}}
\newcommand{\xll}{x_\Lazf}\newcommand{\yll}{y_\Lazf}
\newcommand{\ifrac}[2]{#1/#2}
\newcommand{\family}[1]{(#1_\famt)_{|\famt| \le 1}}
\newcommand{\famt}{\tau}
\newcommand{\famgammap}{\gamma}
\newcommand{\famgammam}[1]{\famgammap(#1,\cdot)}
\newcommand{\famgamma}[2]{\famgammap(#1,#2)}
\newcommand{\dotProd}[2]{\langle#1,#2\rangle}
\newcommand{\Norm}{N}
\newcommand{\dist}{\textup{dist}}
\newcommand\resp{resp.\ }
\newcommand\parball{[-1,1]}
\newcommand{\symm}{\textup{sym}}
\newcommand{\binf}{b} 
\newcommand{\nl}{u} 
\newcommand{\nlf}{\hat\nl} 
\newcommand{\expo}{\gamma}
\newcommand{\LS}{\mathcal L} 
\newcommand{\Sp}{\text{Sp}}
\newcommand{\perim}[1]{l_{\partial #1}}
\newcommand{\lop}{\mathcal{T}}
\newcommand{\tlop}{\tilde\lop}
\newcommand{\tlopr}{\tilde\lop_{*,\text{R}}}
\newcommand{\nc}[1]{_{C^{#1}}}
\newcommand{\rightSmoothness}{8}
\newcommand{\st}{\ \textup{s.t.}\ }
\newcommand{\nsob}{_{\expo}}
\newcommand{\defspace}{C_\symm^r}
\newcommand{\ie}{i.e.\ }
\newcommand{\eg}{e.g.\ }
\newcommand{\Lq}{L_q}
\newcommand{\lPrivate}{\ell}
\newcommand{\lfunc}[1]{\lPrivate_{#1}}
\newcommand{\tlfunc}[1]{\tilde\lPrivate_{#1}}
\newcommand{\Lazf}{\El}
\newcommand{\lazf}{\el}
\newcommand{\cO}{O}
\newcommand{\dec}[2]{\delta_{#1|#2}}
\def\eps{\varepsilon}
\def\vphi{\varphi}
\def\A{\mathbb A}
\def\R{\mathbb R}
\def\T{\mathbb T}
\def\Z{\mathbb Z}
\def\S{\mathbb T^1}
\def\cM{\mathcal M}
\def\cD{\mathcal D}
\def\cS{\mathcal S}
\def\~{\tilde}
\def\dt{\delta}
\def\lb{\lambda}
\def\p{\partial}
\def\bdef{\begin{definition}}
\def\endef{\end{definition}}
\def\bthm{\begin{theorem}}
\def\ethm{\end{theorem}}
\def\blm{\begin{lemma}}
\def\elm{\end{lemma}}
\def\brm{\begin{remark}}
\def\erm{\end{remark}}
\def\bprop{\begin{proposition}}
\def\eprop{\end{proposition}}
\def\bcor{\begin{corollary}}
\def\ecor{\end{corollary}}
\def\be{\begin{eqnarray}}
\def\ee{\end{eqnarray}}
\def\beal{\begin{aligned}}
\def\enal{\end{aligned}}
\def\beaa{\begin{eqnarray*}}  \def\eeaa{\end{eqnarray*}}
\def\bea{\begin{eqnarray}}            \def\eea{\end{eqnarray}}
\newcommand{\svect}{{\bf s}}
\newcommand{\apar}{\xi} 
\newcommand{\Sop}{\Sigma}
\newcommand{\Sfo}[2]{\sigma_{#2,#1}}
\DeclareFontFamily{U}{wncy}{}
\DeclareFontShape{U}{wncy}{m}{n}{<->wncyr10}{}
\DeclareSymbolFont{mcy}{U}{wncy}{m}{n}
\DeclareMathSymbol{\El}{\mathord}{mcy}{"4C}
\DeclareMathSymbol{\el}{\mathord}{mcy}{"6C}
\begin{document}
\title[Dynamical Spectral Rigidity]
{Dynamical Spectral rigidity among\\
  \texorpdfstring{$\Z_2$}{ℤ₂}-symmetric strictly convex domains\\
  close to a circle}%
\hypersetup{pdftitle={Dynamical Spectral rigidity among ℤ₂-symmetric strictly convex domains close to a circle},
    pdfauthor={Jacopo De Simoi, Vadim Kaloshin, Qiaoling Wei, with an appendix by Hamid Hezari and the authors}
}
\author{Jacopo De Simoi}
\address{Jacopo De Simoi\\
  Department of Mathematics\\
  University of Toronto\\
  40 St George St. Toronto, ON, Canada M5S 2E4} \email{{\tt jacopods@math.utoronto.ca}}
\urladdr{\href{http://www.math.utoronto.ca/jacopods}{http://www.math.utoronto.ca/jacopods}} %

\address{Hamid Hezari\\
Department of Mathematics\\
University of California, Irvine\\
Irvine, CA 92697, USA.} \email{{\tt hezari@math.uci.edu}}
\urladdr{\href{http://www.math.uci.edu/~hezari/}{http://www.math.uci.edu/~hezari/}}%

\author{Vadim Kaloshin}
\address{Vadim Kaloshin\\
  Department of Mathematics\\
  University of Maryland, College Park\\
  20742 College Park, MD, USA.}  \email{{\tt vadim.kaloshin@gmail.com}}
\urladdr{\href{http://www.math.umd.edu/~vkaloshi/}{http://www.math.umd.edu/~vkaloshi/}}%
\author{Qiaoling Wei}
\address{Qiaoling Wei\\
  Department of Mathematics, Capital Normal University, Beijing 100048, PR CHINA}  \email{{\tt wql03ster@gmail.com}}
\begin{abstract}
  We show that any sufficiently (finitely) smooth $\Z_2$-symmetric strictly convex domain sufficiently close to a circle
  is dynamically spectrally rigid, \ie all deformations among domains in the same class which preserve the length of all
  periodic orbits of the associated billiard flow must necessarily be isometric deformations.  This gives a partial
  answer to a question of P. Sarnak (see ~\cite{Sa}).
\end{abstract}

\maketitle

\section{Introduction}
In this paper we study a problem motivated by the famous question of M. Kac~\cite{Ka}: ``Can one hear the shape of a
drum?''  More formally: let $\Om\subset \R^2$ be a planar domain, and denote by
$\Sp(\Omega) = \{0 < \lambda_0\le\lambda_1\le\cdots\le\lambda_k\le\cdots\}$ the \emph{Laplace Spectrum} of $\Om$ with
some specified boundary conditions (\eg one can consider Dirichlet boundary conditions\footnote{ From the physical point
  of view, the Dirichlet eigenvalues $\lb$ correspond to the eigenfrequencies of a vibrating membrane of shape $\Om$
  which is fixed along its boundary.}).  In other words, $\Sp(\Om)$ is the set (with multiplicities) of positive real
numbers $\lb$ that satisfy the eigenvalue problem
\begin{align*}
  \Delta u + \lb^2 u&=0, &
  u &= 0 \text{ on }\partial \Om.
\end{align*}
Given a class $\cM$ of domains and a domain $\Om\in\cM$, we say that $\Om$ is \emph{spectrally determined in $\cM$} if
it is the unique element (modulo isometries) of $\cM$ with its Laplace Spectrum: if $\Om,\Om'\in\cM$ are {\it isospetral}, i.e. $\Sp(\Om') = \Sp(\Om)$, then $\Om'$ is the image of $\Om$ by an isometry (\ie a composition of translations and
rotations).

The question of Kac can be thus formulated as follows, assuming we have fixed a class of domains $\cM$:
\begin{isp}
  Is every $\Om\in\cM$ spectrally determined?
\end{isp}
If $\cM$ is the space of all planar domains, the answer is well known to be negative (see \eg \cite{GWW}, which
generalizes some results previously obtained for compact manifolds without boundary (see~\cite{Sunada,Vi})).\footnote{
  Remarkably, Sunada (see~\cite{Sunada}) exhibits isospectral sets (\ie sets of isospectral manifolds) of arbitrarily
  large cardinality.}  However, all known examples of domains that are not spectrally determined are not convex,
moreover, they are bounded by curves that are only piecewise analytic (e.g. plane domains with corners).  On the other hand, Zelditch proved
in~\cite{ZelditchAnalytic} that the inverse spectral problem has a positive answer when $\cM$ is a generic class of
analytic $\Z_2$-symmetric convex domains (\ie symmetric with respect to reflection about a given axis).

The problem for non-analytic domains is substantially more challenging.  In the $C^\infty$ category,
Osgood-Phillips-Sarnak~\cite{OPS1,OPS2,OPS3} showed that isospectral sets are necessarily compact in the $C^\infty$
topology.  Sarnak (see~\cite{Sa}) also conjectured that an isospectral set consists of isolated domains.  In other
words, $C^\infty$-close to a $C^\infty$ domain there should be no isospectral domains, except those that can be obtained
by an isometry.  A weaker version of this conjecture can be stated as follows: a domain $\Om$ is said to be
\emph{spectrally rigid in $\cM$} if any $C^1$-smooth one-parameter isospectral family $\family{\Om}\subset\cM$ with
$\Om_0 = \Om$ is necessarily an isometric family.  We can then ask: ``Are all $C^\infty$ domains spectrally rigid?''

The problem of spectral rigidity is in principle much simpler than the inverse spectral problem;
yet it turns out to be extremely challenging.  Hezari--Zelditch (see~\cite{HZ}) provided a result
in the affirmative direction: let $\Om_0$ be bounded by an ellipse $\mathcal E$, then any
one-parameter isospectral $C^\infty$-deformation $\family\Om$ which additionally preserves
the $\Z_2 \times \Z_2$ symmetry group of the ellipse is necessarily flat (i.e., all derivatives have
to vanish for $\famt = 0$).\footnote{ Results of this kind are usually referred to as
\emph{infinitesimal spectral rigidity}.}  Popov--Topalov~\cite{PT} recently extended these results
(see also~\cite{PT1}).

Further historical remarks on the inverse spectral problem can also be found in~\cite{HZ} and
in the surveys~\cite{ZelditchSurvey04} and~\cite{ZelditchSurvey15}.

\subsection{The Length Spectrum and its relation with the Laplace spectrum}
There is a remarkable relation between the Laplace spectrum of a domain and a dynamically defined object
that we now proceed to define.  The \emph{Length Spectrum of $\Om$} is defined as the set
\begin{align*}
  \LS(\Omega) = \N\,\{\text{length of all closed geodesics of }\Omega\} \cup \N\,\{\perim{\Omega}\},
\end{align*}
where $\perim\Omega$ denotes the length of the boundary $\partial\Omega$ and $\N = \{1,2,\cdots\}$.  By \emph{closed
  geodesic of $\Om$} above we mean a periodic trajectory of the billiard flow (\ie geodesic flow in the interior of
$\Om$ with optical reflections on $\partial\Om$).

\vskip 0.1in

Andersson--Melrose (see~\cite[Theorem (0.5)]{AM}, which generalized earlier results in~\cite{Chaza,Dui}) showed that, for
strictly convex $C^{\infty}$ domains, the following relation between the singular support of the \emph{wave trace} and
the Length Spectrum holds:
\begin{align} \label{eq:laplace-length}
  \textup{sing\ supp}\left(t\mapsto \sum_{\lb_j\in  \Sp(\Om)}
  \exp (i \lb_j\,t)\right)\subset\pm\mathcal
  L(\Om)\cup\{0\}.
\end{align}
Indeed, the above inclusion holds for non-convex $C^\infty$ domains in arbitrary dimension (see~\cite[Theorem
5.4.6]{PS}).  Moreover, under generic conditions (see Remark~\ref{rm:spectra-equality} for more details) it can be shown
that the above inclusion is indeed an equality and the Laplace Spectrum determines the Length Spectrum.

It is natural to pose the same questions as above in this dynamical setting.  We say that $\Om$ is \emph{dynamically
  spectrally determined in $\cM$} if it is the unique element (modulo isometries) of $\cM$ with its Length Spectrum.

\begin{idp}
  Is every $\Om\in\cM$ dynamically spectrally determined?
\end{idp}

All counterexamples to the inverse spectral problem mentioned earlier also constitute counterexamples to the inverse
dynamical problem.  Likewise, at present, there is no known counterexample realized by convex domains.  Moreover, the
above mentioned result by Zelditch (in~\cite{ZelditchAnalytic}) also holds in the dynamical context.  In the case of
sufficiently smooth convex domain, the problem is open and presents the same challenges as the inverse spectral problem.
Let us now define the dynamical notion corresponding to spectral rigidity: we say that a domain $\Om_0\in\cM$ is
\emph{dynamically spectrally rigid in $\cM$} if any $C^1$-smooth one-parameter dynamically isospectral family
$\family\Om\subset\cM$ is necessarily an isometric family.  We can now present our result, which will be more precisely
stated in Section~\ref{s_statements}.

\vskip 0.1in

\begin{mresult}
  Let $\cM$ be the set of strictly convex domains with sufficiently (finitely) smooth boundary,
  axial symmetry and that are sufficiently close to a circle.  Then any $\Om\in\cM$
  is dynamically  spectrally rigid in $\cM$.
\end{mresult}

\subsection{Related prior results}
The problem of isospectral deformations of manifolds without boundary were considered
in some early works on variations of the spectral functions and wave invariants.

\vskip 0.1in

Let $(M,g)$ be a compact boundaryless Riemannian manifold.  A family
$(g_\famt)_{|\famt| \le 1}$ of Riemannian metrics on $M$
depending smoothly on the parameter $|\famt|\le 1$ is called a \emph{deformation
of the metric $g$} if $g_0 = g$.  A deformation is called
{\it trivial} if there exists a one-parameter
family of diffeomorphisms $\vphi_\famt:M \to M$ such that $\vphi_0 = \id$, and
$g_\famt = (\vphi_\famt)^*\,g_0$. For each homotopy class
of closed curves in $M$,
consider the infimum of $g$-lengths of curves belonging to
the given homotopy class.
The \emph{Length Spectrum} $\mathcal L(M,g)$ is defined as the union of these lengths over
all homotopy classes.  The \emph{inverse spectral problem} in this setting is to show that
two metrics with the same Length Spectrum are isometric.

\vskip 0.1in

Likewise, a deformation $(g_\famt)_{|\famt| \le 1}$ is said to
be {\it isospectral} if
$\mathcal L(M,g_\famt)=\mathcal L(M,g)$.  We say that a Riemannian manifold $(M,g)$ is
{\it spectrally rigid} if it does not admit non-trivial isospectral deformations.

Guillemin--Kazhdan in~\cite{GKaz} showed that any negatively curved surface is spectrally
rigid among negatively curved surfaces.  This result has been later extended to compact
manifolds of negative curvature in~\cite{CS}. Remark that an open question is that if one can generalize the result of ~\cite{GKaz} to hyperbolic billiards.

\vskip 0.1in

Our result is {\it an analog of~\cite{GKaz} 
  for $\Z_2$-{}symmetric convex domains close to a disk.}

\vskip 0.1in

It is also worth mentioning that for such systems there is a partial solution of the inverse spectral problem due
independently to Croke \cite{Cr} and Otal~\cite{Ot} which can be stated as follows: any negatively curved manifold is
uniquely determined by its \emph{Marked Length Spectrum}.\footnote{ The \emph{Marked Length Spectrum} is given by the
  collection of lengths of closed geodesics paired with their homotopy type.}

\vskip 0.1in

Another example of deformational spectral rigidity appears in De la Llave, Marco and Moriy\'on~\cite{dMM}.  Recall that
one can associate to a symplectic map a generating function. Then, for each periodic orbit, one can define the
corresponding \emph{action} by summing the generating function along the orbit.  This value of the action is invariant
under symplectic coordinate changes.  The union of the values all these actions over all periodic orbits is called the
action spectrum of the symplectic map.  In~\cite{dMM}, it is shown that there are no non-trivial deformations of
exact symplectic mappings $B_\famt,\ \famt \in [-1,1]$, leaving the action spectrum fixed, when $B_\famt$ are
Anosov's mappings on a symplectic manifold.  One of the reasons for symplectic rigidity in~\cite{dMM} is that all
periodic points of $B_\famt$ are hyperbolic and form a dense set.

\subsection*{Outline of our paper}
In Section~\ref{s_statements}, after introducing the necessary objects, we give a formal statement of the main
result.  In Section~\ref{s_normalized}, we reduce any family $\family\Om$ of axially symmetric domains to a
{\it normalized} family $\family{\tilde\Om}$ by rotations and translations, so that they share the same
symmetry axis and their boundaries share a common point on this axis; we then restate our main result for
normalized families (see Theorem~\ref{t_main''}).  In Section~\ref{s_billiard}, we prove the existence of
maximal symmetric periodic orbits of period $q$ and rotation number $1/q$ for any $q>1$, \ie axially symmetric
$q$-gons of maximal perimeter.  If a family $\family\Om$ is isospectral, then to each orbit of this type we
can associate an isoperimetric functional $\ell_{\Om,q}$ which vanishes in the direction of the perturbation.
Using these linear functionals $(\ell_{\Om,q})_{q>1}$ we define {\it a linearized isospectral operator}
$\lop_\Om$ (see~\eqref{eq:LOO-def}) and reduce the main result to the claim that this operator $\lop_\Om$ is
injective (Theorem~\ref{t_main}).  In Section~\ref{s_proof}, we introduce a modification of Lazutkin
coordinates designed to study the behaviour of $\lop_\Om$ and we prove Theorem~\ref{t_main} using the modified
Lazutkin coordinates and some explicit computations that we obtain in Appendix~\ref{Appendix-with-Hezari},
which is joint with H. Hezari.  In Section~\ref{s_finiteDim}, we outline the proof of a generalization of our
result to domains that are not necessarily close to a circle.  In Section~\ref{s_conclusions}, we add some
remarks on the challenges that we expect when trying to prove our result in a more general setting.  In
Appendix~\ref{s_Lazutkin}, we derive required properties of the modified Lazutkin coordinates.

\section{Definitions and statement of results}\label{s_statements}
We now provide more precise definitions of the objects introduced in the previous introductory section in the billiard table setting.

Denote by $\cD^r$ the set of strictly convex open planar domains $\Om$ whose boundary is $C^{r+1}$ smooth\footnote{ We
  use the superscript with $r$, because the associated billiard map is $C^r$ (see Section~\ref{s_billiard}).} . {For
  each domain $\Omega\in \cD^r$ denote by $\rho_\Omega$ the radius of the curvature of the boundary $\partial \Omega$.}
We will always consider the underlying class of domains $\cM\subset\cD^r$ for\footnote{ The fact that the boundary is at
  least $C^3$ guarantees that the (broken) geodesic flow is complete (see e.g.~\cite{Halpern}).}  $r\ge2$.  By
convention, we set the \emph{positive} orientation of $\p\Om$ to be counterclockwise.

By definition, geodesics in a bounded planar domain Ω are geodesics (straight lines) that get reflected at the boundary
according to the optical law ``angle of reflection = angle of incidence''. Such geodesics are often called broken geodesics. In particular,
\begin{definition}
A \emph{closed geodesic} in $\Om$ is a (not necessarily convex) polygon
inscribed in $\p\Om$ such that at each vertex, the angles formed by each of
the two sides joining at the vertex with the tangent line to $\p\Om$ are equal.
The  \emph{perimeter} of the polygon is called the \emph{length of the geodesic}.
\end{definition}
\begin{definition}
  The \emph{Length Spectrum}
  of a domain $\Omega$ is the set of positive real numbers
\begin{align*}
  \LS(\Omega) = \N\,\{\text{length of all closed geodesics of }
  \Omega\} \cup \N\,\{\perim{\Omega}\},
\end{align*}
where $\perim\Omega$ is the length of the boundary $\partial\Omega$.
\end{definition}
Let us introduce the notion of a deformation of a domain.  Recall the standard notation $\S = \R/\Z$.
\begin{definition}
  We say that $\family\Om$ is a \emph{$C^1$ one-parameter family of domains in $\cM$} if $\Om_\famt\in\cM$ for any
  $|\famt|\le1$ and there exists
  \begin{align*}
    \famgammap:\parball\times\S\to\R^2
  \end{align*}
  such that $\famgamma \famt{\apar}$ is continuously differentiable in $\famt$ and, for any $\famt\in\parball$, the map
  $\famgammam{\famt}$ is a $C^{r+1}$ diffeomorphism of $\S $ onto $\p\Omega_\famt$.  The function $\gamma$ is said to be
  a \emph{parameterization} of the family.
\end{definition}
\begin{noremark}
  We adopt the following typographical conventions for parameterizations. The symbol $\famt$ is always used to denote
  different elements of the family $\Om_\famt$.  The symbol $\apar$ denotes an arbitrary parameterization of the
  boundary of some domain $\Om$.  The symbol $s$ always denotes \emph{arc-length} parameterization of the boundary of
  some domain $\Om$.  In Section~\ref{s_proof} we introduce the \emph{Lazutkin} parameterization of the boundary of a
  domain $\Om$: it is always denoted by the symbol $x$.
\end{noremark}
\begin{definition}
  {\newcommand{\isom}{\mathcal{I}} A family $\family\Om$ is said to be \emph{isometric} (or \emph{trivial}) if there
    exists a family $\family{\isom}$ of isometries $\isom_\famt:\R^2\to\R^2$ (i.e.\ composition of a rotation and a
    translation) such that $\Omega_\famt = \isom_\famt\Omega_0$.}  A family $\family\Om$ is said to be \emph{constant}
  if $\Om_\famt = \Om_0$ for all $|\famt|\le 1$.
\end{definition}
\begin{remark}
  For a given family $\family\Om$, the parameterization $\gamma$ is, of course, not unique.  In fact, $\gamma$ and
  $\tilde\gamma$ parameterize the same family $\family\Om$ if and only if there exists a $C^1$-family of $C^{r+1}$
  circle diffeomorphisms $\apar:\parball\times\S\to\S$, such that
  $\tilde\gamma(\famt,\tilde \apar) = \gamma(\famt,\apar(\famt,\tilde \apar))$.  We call two parameterizations
  \emph{equivalent} if they correspond to the same family of domains.  Furthermore, notice that we do not consider
  families which differ by a \emph{time reparameterization} to be equivalent.
\end{remark}
We now proceed to define the main object of our work: families of isospectral domains.
\begin{definition}
  A family $\family\Om$ is said to be \emph{dynamically isospectral}\footnote{ In the literature this notion is also
    known as \emph{length-isospectrality}.} if $\LS(\Om_\famt) = \LS(\Om_{\famt'})$ for any $\famt,\famt' \in\parball$.
\end{definition}
Equipped with the above definition, we can define the \emph{dynamical spectral rigidity}
of a domain $\Om$.
\begin{definition}\label{def:DSR}
  A domain $\Om\in\cM$ is said to be \emph{dynamically spectrally rigid in $\cM$} if any dynamically isospectral family
  of domains $\family\Om$ in $\cM$ with $\Om_0 = \Om$ is an isometric family.
\end{definition}
We are going to show that if $r$ is sufficiently large, any domain that is sufficiently close to a circle and axially
symmetric is dynamically spectrally rigid in this class of domains.  We now proceed to define the class.

\begin{definition}
  We say that $\Om$ is \ $\Z_2$-\emph{symmetric} (or \emph{axially symmetric}) if there exists a reflection of the plane
  $\refl:\R^2\to\R^2$ such that $\refl\Om = \Om$.  Denote by $\cS^r$ the set of all $\Z_2$-symmetric domains in $\cD^r$.
\end{definition}

To introduce the notion of \emph{closeness to a circle}, recall that
a closed curve is a circle if and only if its curvature is constant.
\begin{definition}
  Let $\Omega\in\cD^r$ of perimeter $1$ parameterized in arc-length by $\gamma$ and let $\D_\Om$ be a disk of perimeter
  $1$ that is tangent to $\Omega$ at the point $s = 0$, parameterized in arc-length by $\gamma_{\D}$.  For $\delta > 0$,
  $\Omega$ is said to be \emph{$\delta$-close to a circle} if
  \begin{align*}
    \left\|\gamma-\gamma_\D\right\|_{C^{r+1}} \le \delta.
  \end{align*}
  A domain $\Omega\in\cD^r$ of arbitrary perimeter is said to be \emph{$\delta$-close to a circle} if its rescaling of
  perimeter $1$ is $\delta$-close to a circle.
\end{definition}
We denote by $\cD^r_\delta$ (\resp $\cS^{r}_\delta$) the set of domains in $\cD^r$ (\resp $\cS^{r}$) that are
$\delta$-close to a circle.  We are finally able to state the main result of this paper.
\begin{mthm}%
  Let $r = \rightSmoothness$; there exists $\delta > 0$ such that any domain $\Om\in\cS^r_\delta$, is dynamically
  spectrally rigid in $\cS^r_\delta$.
\end{mthm}
\begin{remark}\label{rm:spectra-equality}
  It turns out that the Laplace spectrum generically determines the length spectrum.  More precisely, assume that the
  following generic conditions are met
  \begin{enumerate}
  \item no two distinct periodic orbits have the same length;
  \item the Poincar\'e map of any periodic orbit of the associated billiard ball map (see~\eqref{def:bill-map}) is
    non-degenerate.
  \end{enumerate}
  Then we can replace the ``$\subset$'' symbol in~\eqref{eq:laplace-length} with an ``$=$'' sign (see~\cite[Chapter
  7]{PS}; indeed the same result holds in arbitrary dimension, and one can even drop the convexity assumption in the
  case of planar domains).
\end{remark}
In view of the above remark, our Main Theorem has an immediate rephrasing in terms of the spectral rigidity problem.
\begin{ocorollary}
  Let $r = \rightSmoothness$ and let $\tilde\cS^r_\delta\subset\cS^r_\delta$ be the set of domains in $\cS^r_\delta$
  that satisfy the generic assumptions listed in Remark~\ref{rm:spectra-equality}.  There exists $\delta > 0$ such that
  any $\Om\in\tilde\cS^r_\delta$ is spectrally rigid in $\tilde\cS^r_\delta$.
\end{ocorollary}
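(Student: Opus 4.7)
The plan is to deduce the Corollary directly from the Main Theorem, using Remark~\ref{rm:spectra-equality} as the single bridge from the Laplace spectrum to the length spectrum. Concretely, the two generic conditions imposed in the definition of $\tilde\cS^r_\delta$ are precisely what is needed to upgrade the Andersson--Melrose inclusion~\eqref{eq:laplace-length} to an equality, at which point Laplace-isospectrality of a family in $\tilde\cS^r_\delta$ forces dynamical isospectrality, and the Main Theorem finishes the job.

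First, I would fix the $\delta>0$ produced by the Main Theorem and take an arbitrary $C^1$ one-parameter Laplace-isospectral family $\family\Om\subset\tilde\cS^r_\delta$ with $\Om_0=\Om$. Because each $\Om_\famt$ lies in $\tilde\cS^r_\delta$, no two distinct periodic billiard orbits of $\Om_\famt$ share a length and every Poincaré map of a periodic orbit is non-degenerate, so the hypotheses of Remark~\ref{rm:spectra-equality} are in force for every $\famt\in\parball$. The remark then promotes~\eqref{eq:laplace-length} to the equality
\[
\textup{sing\ supp}\!\left(t\mapsto\sum_{\lambda_j\in\Sp(\Om_\famt)}e^{i\lambda_j t}\right)=\pm\LS(\Om_\famt)\cup\{0\}.
\]

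Next, I would exploit the standing assumption $\Sp(\Om_\famt)=\Sp(\Om_0)$ for every $|\famt|\le1$: the left-hand side of the displayed identity is independent of $\famt$, and comparing the right-hand sides yields $\LS(\Om_\famt)=\LS(\Om_0)$, so $\family\Om$ is dynamically isospectral. Since $\tilde\cS^r_\delta\subset\cS^r_\delta$, the family is in particular a $C^1$ one-parameter dynamically isospectral family in $\cS^r_\delta$, and applying the Main Theorem to it immediately gives that $\family\Om$ is isometric, which by Definition~\ref{def:DSR} applied inside $\tilde\cS^r_\delta$ is exactly the required spectral rigidity.

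There is essentially no analytic obstacle: the one delicate point is the upgrade of~\eqref{eq:laplace-length} from an inclusion to an equality, but this is precisely the content of Remark~\ref{rm:spectra-equality} and is secured by hypothesis on each member of the family. No additional work beyond citing the Main Theorem is required.
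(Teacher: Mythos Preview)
Your proposal is correct and follows exactly the route the paper intends: the paper itself offers no explicit proof, merely introducing the Corollary with ``In view of the above remark, our Main Theorem has an immediate rephrasing in terms of the spectral rigidity problem,'' and your argument is precisely that immediate rephrasing spelled out. The only step you added beyond the paper's one-line justification---reading off $\LS(\Om_\famt)=\LS(\Om_0)$ from the equality of wave-trace singular supports---is routine, since the length spectrum sits inside $(0,\infty)$.
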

In other words, finitely smooth $\Z_2$-symmetric convex domains close to a circle are generically spectrally rigid.
\begin{remark}
  Hezari, in a recent preprint (see~\cite{He}), using the method of this paper combined with wave trace invariants of
  Guillemin--Melrose and the heat trace invariants of Zayed for the Laplacian with Robin boundary conditions, show that
  one can generalize the Dirichlet/Neumann spectral rigidity claimed in the above corollary to the case of Robin
  boundary conditions (see \cite{He} for the references).
\end{remark}
\section{A preliminary reduction}
\label{s_normalized}
It is natural to introduce a notion of normalization, which allows us to restate our result in a simpler manner; this
will be accomplished in Theorem~\ref{t_main''}.  Let $\Om\in\cS^r$; in the case that $\Om$ admits more than one axes of
symmetry, let us choose (arbitrarily) one of such axes and refer it as \emph{the} symmetry axis of $\Om$.  Since the
domain $\Om$ is convex, its symmetry axis intersects $\p\Om$ in two points. Choose (arbitrarily) one of such points: we
refer to it as \emph{the marked point of $\p\Om$}; the other point will be referred to as \emph{the auxiliary point of
  $\p\Om$}.  From now on, whenever we consider a domain $\Om$, we assume that a choice for the symmetry axis, the marked
point and the auxiliary point has been made.  Observe that once they have been chosen for $\Om_0$, then, by continuity,
they are unambiguously determined for any element of the family $\family\Om$.  Furthermore, we also assume that the
parameterization $\gamma$ of the family $\family\Om$ is such that $\gamma(\famt,0)$ is the marked point of $\Om_\famt$.

\begin{definition}
  A domain $\Omega\in\cS^r$ is said to be \emph{normalized} if the marked point of $\p\Omega$ is at the origin of $\R^2$
  and the auxiliary point lies on the positive $x$-semi-axis.  A family $\family\Om$ is said to be \emph{normalized} if
  $\Om_\famt$ is normalized for any $|\famt|\le 1$.
\end{definition}
Naturally, given a family $\family\Om$, we can always use isometries to construct an associated normalized family
$\family{\tilde\Om}$ as follows:
\begin{itemize}
\item translate each domain so that the marked point of $\p\Omega_{\tau}$ is at the origin
  of $\R^2$.
\item rotate the domain around the origin so that the auxiliary point of $\p\Omega_{\tau}$ lies on the positive horizontal semi-axis
\end{itemize}
We call $\family{\tilde\Om}$ the \emph{normalization} of the family $\family\Om$. Since $\family\Om$ is a $C^1$-family,
we gather that $\family{\tilde\Om}$ is also a $C^1$-family. Observe that, as each $\tilde\Omega_\famt$ is obtained from
$\Omega_\famt$ via an isometry, we have $\LS(\Omega_\famt) = \LS(\tilde\Omega_\famt)$. In particular, $\family\Om$ is a
dynamically isospectral family if and only if so is $\family{\tilde\Om}$.

We can now give an equivalent statement of our Main Theorem as follows:
\begin{theorem}\label{t_main''}
  Let $r = \rightSmoothness$; there exists $\delta > 0$ such that if $\family\Om$ is a normalized, dynamically
  isospectral $C^1$-family of domains in $\cS^r_\delta$, then $\family\Om$ is a constant family.
\end{theorem}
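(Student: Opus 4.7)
The plan is to reduce Theorem~\ref{t_main''} to the injectivity of a linear operator on admissible infinitesimal perturbations of $\p\Om_0$, and then to establish injectivity via explicit asymptotic analysis in modified Lazutkin coordinates.

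First, I would linearize the problem. Given a normalized, dynamically isospectral $C^1$-family $\family\Om$, write $\famgammap(\famt,\cdot)$ for an arc-length parameterization of $\p\Om_\famt$ and let $\nu$ be the normal component of $\partial_\famt \famgammap\big|_{\famt = 0}$; the tangential component amounts to a reparameterization and carries no geometric information. Normalization forces $\nu$ to be even with respect to the reflection $\refl$ and to vanish at the marked and auxiliary points, producing what I will call the \emph{admissible class}. A standard openness-and-connectedness argument reduces the theorem to the assertion that, at every $\Om$ sufficiently close to a circle, any admissible $\nu$ annihilating the linearized isospectral conditions at $\Om$ must vanish identically.

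Second, for each integer $q > 1$ I would extract a testing functional from isospectrality. Using strict convexity and $\Z_2$-symmetry, one shows that the perimeter functional on axially symmetric inscribed $q$-gons of rotation number $1/q$ attains a non-degenerate maximum on an orbit whose length $L_q(\famt)$ depends in $C^1$ fashion on $\famt$. Dynamical isospectrality forces $L_q(\famt)$ to be locally constant (since $L_q$ is $q$ times the length of a single closed geodesic belonging to $\LS(\Om_\famt)$), so the classical first variation formula for perimeters of inscribed polygons yields
\begin{equation*}
  0 = \frac{d}{d\famt}\bigg|_{\famt = 0} L_q(\famt) = \sum_{k = 0}^{q-1} \nu(s_k)\cos\theta_k =: \ell_{\Om,q}(\nu),
\end{equation*}
where $(s_k,\theta_k)$ encode the arc-length positions and incidence angles of the vertices of the maximal symmetric $q$-orbit on $\p\Om$. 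Collecting these functionals as $q$ varies defines the linearized isospectral operator $\lop_\Om$, and the theorem reduces to the injectivity of $\lop_\Om$ on the admissible class.

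Third, to prove injectivity I would introduce the modified Lazutkin coordinate $x$ constructed in Section~\ref{s_proof} and Appendix~\ref{s_Lazutkin}. Its defining feature is that, up to rapidly decaying corrections in $q$, the vertices of the maximal symmetric $q$-orbit occupy the equidistributed points $k/q$ for $k = 0,\ldots,q-1$. After the change of variables $\nu(s)\,ds = f(x)\,dx$, the functional $\ell_{\Om,q}(\nu)$ becomes, up to a smooth nonvanishing factor $\mu(x)$, the $q$-th Riemann sum of $\mu f$; by Poisson summation this Riemann sum equals the $q$-th Fourier coefficient of $\mu f$ modulo a remainder controlled by Sobolev norms. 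Invoking the explicit expansions of Appendix~\ref{Appendix-with-Hezari}, and exploiting $\Z_2$-symmetry of both $f$ and $x$, the system $\{\ell_{\Om,q}(\nu) = 0 : q > 1\}$ takes the form ``identity plus small operator'' on the Fourier cosine sequence of $\mu f$, which is invertible for $\delta$ sufficiently small; this forces $f \equiv 0$ and hence $\nu \equiv 0$.

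The hard part is this last step. The functionals $\ell_{\Om,q}$ coincide with Fourier coefficients only to leading order, and the correction terms depend on the geometry of the maximal symmetric orbits and on derivatives of the Lazutkin generating function. Closing the perturbative scheme demands remainder bounds that are simultaneously uniform in $q$ and summable across $q$; this is the source of the two quantitative constraints in the statement, namely the smallness of $\delta$ (which keeps the Lazutkin parameterization close to arc-length and the orbit combinatorics close to the circular model, uniformly in $q$) and the finite smoothness $r = \rightSmoothness$ (which, via repeated integration by parts, provides the polynomial decay in $q$ needed to absorb the corrections into an invertible perturbation of the identity).
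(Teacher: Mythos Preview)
Your overall strategy matches the paper's: linearize, extract one functional $\ell_{\Om,q}$ per period from maximal symmetric orbits, pass to Lazutkin coordinates, and show the resulting operator is injective. Two of your intermediate steps, however, contain genuine gaps.

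\textbf{Constancy of $L_q(\famt)$.} You write that isospectrality forces $L_q$ to be locally constant ``since $L_q$ is $q$ times the length of a single closed geodesic belonging to $\LS(\Om_\famt)$''. Membership in $\LS(\Om_\famt)=\LS(\Om_0)$ alone does not give constancy: a continuous function can wander inside a fixed set. The paper supplies the missing ingredient, namely Lemma~\ref{l_spectrumNullSet}, which uses Sard's theorem on the $C^2$ length function~\eqref{e_lengthFunctional} to show that $\LS(\Om_0)$ has Lebesgue measure zero; combined with the intermediate value property (Corollary~\ref{c_isospectral}) this forces any continuous $\Delta_q$ with values in $\LS(\Om_0)$ to be constant. (Incidentally, $L_q$ is the length of the $q$-periodic orbit, not $q$ times it.)

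\textbf{Non-degeneracy and $C^1$-dependence.} You assert that the perimeter functional on symmetric $q$-gons has a \emph{non-degenerate} maximum, hence $L_q(\famt)$ is $C^1$. This is not proved and the paper deliberately avoids it: $\Delta_q$ is shown only to be Lipschitz, and the vanishing of the first variation is obtained via the lower differential (Lemma~\ref{upp-dif} and Proposition~\ref{deform}), which requires no regularity of the maximizer beyond existence. For domains $\delta$-close to a circle one could likely prove non-degeneracy by checking it for the circle and perturbing, but that is a separate argument you have not supplied; without it your ``classical first variation'' step is unjustified.

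A smaller point: the invertibility step is not literally ``identity plus small operator''. In the paper's decomposition (Lemma~\ref{l_decomposition}) one must first strip off two exceptional directions $\binf_l$ and $\binf_\bullet$ (coming from the perimeter constraint and from the $1/q^2$ term in~\eqref{e_periodicLazutkinDynamics-phi}), and even then the remaining matrix has a diagonal block $(1+\Sfo{0}{q}+\beta_0/q^2)\dec qj$ whose deviation from the identity is of order $\zeta(3)-1+\pi^2\zeta(3)/24\approx 0.7$, independent of $\delta$. The $\delta$-smallness is used only to control the off-diagonal remainder $\mathcal R_{qj}$; the diagonal part is handled by an explicit numerical estimate. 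Your sketch elides this structure.
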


We now proceed to set up yet another equivalent statement of our Main Theorem, which will be stated as
Theorem~\ref{t_main'}.

 Given a parameterization $\gamma$ of a family $\family\Om$ in $\cS^r$, we define the
{\it infinitesimal deformation function}:
\begin{align*}
  n_\gamma(\famt,\apar) = \dotProd{\partial_\famt\gamma(\famt,\apar)}{\Norm_\gamma(\famt,\apar)},
\end{align*}
where $\dotProd\cdot\cdot$ is the usual scalar product in $\R^2$ and $\Norm_\gamma(\famt,\apar)$ is the outgoing unit
normal vector to $\p\Omega_\famt$ at the point $\gamma(\famt,\apar)$.  Observe that $n$ is continuous in $\famt$ and
$n(\famt,\cdot)\in C^r(\S,\R)$ for any $\famt\in\parball$.
By the normalization condition of $(\Om_{\famt})_{|\famt|\leq 1}$, we conclude that $n_\gamma(\famt,\cdot)$ is an even
function, \ie $n_\gamma(\famt,\apar) = n_\gamma(\famt,-\apar)$, and
 $n_\gamma(\famt,0) = 0$ for any $\famt\in\parball$.
\begin{lemma}
  Let $\family\Om$ be a family of domains in $\cD^r$ and let $\gamma(\famt,\apar)$ be a parameterization of the family. Then
  \begin{enumerate}
    \item\label{i_0is0} for any other parameterization
    $\tilde\gamma(\famt,\tilde \apar) = \gamma(\famt,\apar(\famt,\tilde \apar))$ we have
    $n_{\tilde\gamma}(\famt,\tilde\apar) = n_\gamma(\famt,\apar(\tau,\tilde\apar))$.  In particular, if for some
    $\famt\in\parball$ we have $n_\gamma(\famt,\cdot)$ is identically $0$, then $n_{\tilde\gamma}(\famt,\cdot)$ is
    identically $0$.
  \item \label{i_global} $n_\gamma(\famt,\apar) = 0$ for all $(\famt,\apar)\in\parball\times\S$ if and only if $\family\Om$ is a
    constant family.
  \end{enumerate}
\end{lemma}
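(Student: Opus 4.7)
The plan is to exploit the fact that $n_\gamma$ captures exactly the \emph{normal} component of the velocity $\partial_\famt\gamma$, so a vanishing $n_\gamma$ reflects a purely tangential motion, which is just a reparameterization of the boundary curve and not a genuine deformation of the set $\p\Om_\famt$.

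For part (a), I would differentiate the identity $\tilde\gamma(\famt,\tilde\apar) = \gamma(\famt,\apar(\famt,\tilde\apar))$ in $\famt$ via the chain rule, obtaining
\begin{equation*}
  \partial_\famt\tilde\gamma(\famt,\tilde\apar) = \partial_\famt\gamma(\famt,\apar(\famt,\tilde\apar)) + \partial_\apar\gamma(\famt,\apar(\famt,\tilde\apar))\,\partial_\famt\apar(\famt,\tilde\apar).
\end{equation*}
The vector $\partial_\apar\gamma$ is tangent to $\p\Om_\famt$ at the point in question and thus orthogonal to the outward unit normal; moreover the normal is a geometric invariant of the point on the curve, so $\Norm_{\tilde\gamma}(\famt,\tilde\apar) = \Norm_\gamma(\famt,\apar(\famt,\tilde\apar))$. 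Taking the scalar product with this normal kills the tangential term and yields the claimed identity; the ``in particular'' conclusion follows since $\apar(\famt,\cdot):\S\to\S$ is a diffeomorphism.

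For part (b), the direction ``constant $\Rightarrow$ $n_\gamma\equiv 0$'' is essentially trivial: if $\Om_\famt\equiv\Om_0$, then by the uniqueness-up-to-reparameterization observation in the earlier remark there is a $C^1$ family of $C^{r+1}$ circle diffeomorphisms $\phi_\famt$ with $\gamma(\famt,\apar) = \gamma(0,\phi_\famt(\apar))$, making $\partial_\famt\gamma$ tangent to $\p\Om_0$ and therefore orthogonal to $\Norm_\gamma$. The substantive direction assumes $n_\gamma\equiv 0$ and produces an explicit reparameterization witnessing constancy: since $\partial_\apar\gamma$ spans the tangent line and $\partial_\famt\gamma\perp\Norm_\gamma$, I may write $\partial_\famt\gamma(\famt,\apar) = a(\famt,\apar)\,\partial_\apar\gamma(\famt,\apar)$ for a unique continuous function $a$ which is $C^r$ in $\apar$. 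Guided by the chain-rule identity of part (a), I would solve the non-autonomous ODE on $\S$
\begin{equation*}
  \partial_\famt\apar(\famt,\tilde\apar) = -a(\famt,\apar(\famt,\tilde\apar)), \qquad \apar(0,\tilde\apar) = \tilde\apar,
\end{equation*}
and set $\tilde\gamma(\famt,\tilde\apar) = \gamma(\famt,\apar(\famt,\tilde\apar))$. The same chain-rule computation then yields $\partial_\famt\tilde\gamma = (a+\partial_\famt\apar)\,\partial_\apar\gamma\equiv 0$, so $\tilde\gamma$ is independent of $\famt$; consequently $\p\Om_\famt = \tilde\gamma(\famt,\S) = \tilde\gamma(0,\S) = \p\Om_0$, and since each $\Om_\famt$ is the bounded component of $\R^2\setminus\p\Om_\famt$, the family is constant.

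The only technical point — modest here — is to check that the above ODE admits a global solution on $\parball\times\S$ producing a $C^1$-family of $C^{r+1}$ orientation-preserving circle diffeomorphisms. Since $a$ is continuous in $\famt$ and $C^r$ in $\apar$ on the compact manifold $\S$, the flow of the non-autonomous vector field $-a$ exists for all $\famt\in\parball$ and is automatically an orientation-preserving diffeomorphism. This is the closest thing to an obstacle in the argument; the geometric content is entirely contained in the tangent/normal decomposition used above.
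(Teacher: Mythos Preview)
Your argument for part~(a) is essentially identical to the paper's: differentiate the reparameterization identity in $\famt$, use that the normal vector depends only on the point of $\p\Om_\famt$, and kill the tangential term. The only cosmetic difference is that the paper differentiates the inverse relation $\gamma = \tilde\gamma\circ\tilde\apar$ rather than $\tilde\gamma = \gamma\circ\apar$.

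For part~(b), your approach is correct but genuinely different from the paper's. The paper observes that $n_\gamma\equiv 0$ forces $\partial_\famt\gamma$ to be a multiple of $\partial_\apar\gamma$, so $d\gamma$ has rank~$1$ everywhere on $\parball\times\S$, and then invokes the Constant Rank Theorem to conclude that the image of $\gamma$ is a one-dimensional manifold---hence $\p\Om_\famt = \p\Om_0$ for all $\famt$. You instead construct an explicit straightening: solve the non-autonomous ODE $\partial_\famt\apar = -a(\famt,\apar)$ on $\S$ and check that the resulting reparameterization has $\partial_\famt\tilde\gamma\equiv 0$. Your route is more constructive and arguably more elementary (it avoids quoting a global theorem and makes the reparameterization visible), while the paper's is shorter once one is willing to cite the Constant Rank Theorem. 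One small overclaim: the flow of a $C^r$ vector field is $C^r$, not $C^{r+1}$, in the spatial variable; but this is irrelevant, since all you need is that the time-$\famt$ map is a bijection of $\S$, which follows automatically for flows on compact manifolds.
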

\begin{proof}
  Let us fix $\famt$ and assume that $\gamma(\famt,\cdot)$ and $\tilde\gamma(\famt,\cdot)$ are two parameterization of
  $\Om_\famt$, \ie $\gamma(\famt,\apar) = \tilde\gamma(\famt,\tilde \apar(\famt,\apar))$.  Differentiation with
  respect to $\famt$ reads:
  \begin{align*}
    \p_\famt\gamma(\famt,\apar) = \p_\famt\tilde\gamma(\famt,\tilde \apar(\famt,\apar))+\p_\apar\tilde\gamma(\famt,\tilde \apar(\famt,\apar))\p_\apar\tilde \apar(\famt,\apar).
  \end{align*}
  By taking the scalar product with $N_\gamma(\famt,\apar) = N_{\tilde\gamma}(\famt,\tilde \apar(\famt,\apar))$ we
  conclude that $n_\gamma(\famt,\apar) = n_{\tilde\gamma}(\famt,\tilde \apar(\famt,\apar))$ which proves item ~\ref{i_0is0}.

  In order to prove item ~\ref{i_global}, observe that if $n_\gamma(\famt,\apar) = 0$, then the vector $\p_\famt\gamma(\famt,\apar)$ is
  necessarily a multiple of $\p_{\apar}\gamma(\famt,\apar)$; since $\gamma(\famt,\cdot)$ is assumed to be a diffeomorphism,
we  conclude that $d\gamma(\famt,\apar)$ has rank $1$
everywhere.  It follows from the Constant Rank Theorem that
the image of  $\gamma$ is a manifold of dimension $1$ which can be parameterized by $\gamma(0,\cdot)$.  This implies that
  $\Om_\famt = \Om_0$ for any $|\famt|\le 1$.
The converse holds trivially by item~\ref{i_0is0}.
\end{proof}

We can thus further restate Theorem~\ref{t_main''} (and thus our Main Theorem) as follows.
\begin{theorem}\label{t_main'}
  Let $r = \rightSmoothness$, then there exists $\delta > 0$ such that if $\family\Om$ is a normalized dynamically isospectral
  $C^1$-family of domains in $\cS^r_\delta$ then $n_\gamma = 0$ for all parameterizations $\gamma$.
\end{theorem}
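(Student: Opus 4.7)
The plan is to implement the roadmap sketched in the outline of the paper: build, from maximal symmetric periodic orbits, a countable family of linear functionals that annihilate the infinitesimal deformation $n_\gamma(\famt,\cdot)$, and show that this family is rich enough to force $n_\gamma \equiv 0$. It suffices to work at $\famt = 0$, since the entire construction is uniform in $\famt \in \parball$; write $\Om = \Om_0$ and $n := n_\gamma(0,\cdot)$, an even $C^r$ function on $\S$ with $n(0) = 0$ thanks to the normalization. The goal is to show $n \equiv 0$.

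Step 1 produces, for every integer $q \ge 2$, a canonical element of $\LS(\Om_\famt)$ depending $C^1$-smoothly on $\famt$. Since $\Om \in \cS^r_\delta$ is strictly convex, $\Z_2$-symmetric, and close to a disk, the problem of maximizing the perimeter over $\refl$-invariant inscribed $q$-gons admits a unique smooth maximizer $P_q(\Om)$, whose vertices form a symmetric periodic billiard orbit of rotation number $1/q$. Its perimeter $L_q(\Om) := \perim{P_q(\Om)}$ lies in $\LS(\Om)$; by continuity in $\famt$ and dynamical isospectrality, $L_q(\Om_\famt)$ is constant in $\famt$.

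Step 2 differentiates $L_q(\Om_\famt) \equiv L_q(\Om_0)$ at $\famt = 0$. Because the vertices of $P_q(\Om)$ are critical points of the perimeter functional, the envelope theorem eliminates all contributions coming from vertex motion, and the derivative reduces to a weighted sum of values of $n$ at the vertex parameters $\apar_j^{(q)}$, producing a linear identity of the form
\begin{align*}
  \ell_{\Om,q}(n) \;=\; \sum_{j=1}^{q} 2\sin\varphi_j^{(q)}\, n(\apar_j^{(q)}) \;=\; 0,
\end{align*}
where $\varphi_j^{(q)}$ is the reflection angle of $P_q(\Om)$ at the $j$-th vertex. Packaging these identities defines the linearized isospectral operator $\lop_\Om : f \mapsto (\ell_{\Om,q}(f))_{q \ge 2}$ on the space of even $C^r$ functions on $\S$ vanishing at $0$, and Theorem~\ref{t_main'} reduces to showing that $\lop_\Om$ is injective on this space. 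Step 3 analyses $\lop_\Om$ in a modified Lazutkin parameter $x$ adapted to $\Om$: for the disk $\D$, the vertices of $P_q(\D)$ are equispaced at $x_j = j/q$ and the weights reduce, up to controlled corrections, to a universal factor $c_q = 2\sin(\pi/q)(1+O(q^{-2}))$. Expanding a test function in the Fourier basis, $\ell_{\D,q}$ becomes (via Poisson summation) a projection onto the Fourier modes indexed by multiples of $q$, and the explicit computation carried out jointly with Hezari in Appendix~\ref{Appendix-with-Hezari} shows that $\lop_\D$ is bounded below on the subspace of interest. For $\Om \in \cS^r_\delta$ with $\delta$ small, uniform $C^{r+1}$ estimates on $P_q(\Om)$ yield $\lop_\Om = \lop_\D + E_\Om$ with $\|E_\Om\| = O(\delta)$ in the appropriate operator norm, and injectivity persists by a Neumann/perturbation argument.

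The principal obstacle is Step 3: converting the infinite, inhomogeneous family $(\ell_{\Om,q})_{q \ge 2}$ into a genuine analytic invertibility statement. Two ingredients are essential and technically delicate. First, one must show that the modified Lazutkin chart normalizes the geometry of \emph{all} orbits $P_q(\Om)$ uniformly in $q$, with errors controlled simultaneously by $\delta$ and by negative powers of $q$; this is precisely the role of Appendix~\ref{s_Lazutkin}, and it dictates the choice of the smoothness exponent $r = \rightSmoothness$. Second, the computation that $\lop_\D$ has no kernel on the subspace of even functions vanishing at $0$ has a combinatorial flavour that cannot be bypassed without the explicit Fourier-side identities of the joint appendix. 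Once both inputs are in hand, the linear-functional scheme of Steps 1–2 converts injectivity of $\lop_\Om$ into $n \equiv 0$, completing the proof.
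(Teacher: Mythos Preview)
Your strategy is the paper's: derive $\ell_{\Om,q}(n)=0$ from maximal symmetric periodic orbits, then prove the linearized isospectral operator is injective via Lazutkin coordinates and a perturbation argument. Two points in your sketch diverge from the paper's treatment and would require justification you have not supplied.

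First, you assume the maximal symmetric $q$-orbit is \emph{unique} and depends smoothly on $\famt$, so that an envelope theorem applies directly. The paper makes no such assumption: $\Delta_q(\famt)$ is only shown to be Lipschitz, and the identity $\ell_{\Om_\famt,q}(n(\famt,\cdot))=0$ is obtained through the lower differential (Lemma~\ref{upp-dif} and Proposition~\ref{deform}), which yields the conclusion for \emph{every} maximizer at every $\famt$. Your uniqueness claim is plausible for domains close to the circle (via an implicit-function argument from the nondegenerate regular $q$-gon), but you have not argued it, and uniformity in $q$ would need care.

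Second, your operator is $(\ell_{\Om,q})_{q\ge2}$ acting on $\{u\text{ even},\ u(0)=0\}$, whereas the paper's $\lop_\Om$ also includes $\ell_{\Om,0}$ (from perimeter invariance, see~\eqref{e_definitionellp}) and $\ell_{\Om,1}$, and acts on all of $\defspace$. The injectivity proof in Lemma~\ref{l_decomposition} uses $\ell_0$ explicitly to peel off the constant direction via $\binf_l\in\ell^\infty\setminus h_{*,\expo}$ before analyzing $\tlopr$. Without $\ell_0$ your injectivity statement is still true---the zero-average condition can be recovered from the $q\to\infty$ behavior of $\tlfunc q$, since $(\binf_l)_q\to1$ while the $\binf_\bullet$ and $\tlopr$ contributions decay---but this is an extra step your sketch does not address, and it is not what Appendix~\ref{Appendix-with-Hezari} provides (that appendix computes $\tlfunc q(e_j)$ for general $\Om$; the ``bounded below'' statement is Lemma~\ref{l_decomposition}).
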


\section{Billiard dynamics of \texorpdfstring{$\Z_2$}{ℤ₂}-symmetric domains}\label{s_billiard}
Let $\Om\in\cS^r$; for definiteness we fix the perimeter of its boundary to be $1$. Recall that $s$ denotes the
arc-length parameterization and that we conventionally assume that the marked point has coordinate $s = 0$; moreover,
since $\Om$ has perimeter $1$, the auxiliary point has coordinate $s = 1/2$.
We consider the billiard dynamics on $\Om$, which is described as follows: a point particle travels with constant
velocity in the interior of $\Om$; when the particle hits $\p\Om$, it bounces according to the law of optical
reflection: angle of incidence $=$ angle of reflection.  Periodic trajectories of the billiard dynamics are thus,
essentially\footnote{ A periodic trajectory and its time-reversal trace in fact the same closed geodesic; the exceptions
  are \emph{bouncing ball trajectories}, which are invariant for time-reversal and thus correspond $1$-to-$1$ to their
  closed geodesics.} in $2$-to-$1$ correspondence to closed geodesics of $\Om$.  It is customary to study the billiard
dynamics by passing to a discrete-time version of it, i.e. to a map on the canonical Poincar\'e section
$M = \p\Om\times [-1,1]$.  The first coordinate (parameterized\footnote{ We abuse notation and also denote by
  $\gamma:\S\to\R^2$ the parameterization of the single domain $\Om$.}  by $\gamma(s)$) identifies the point at which
the particle has collided with $\p\Om$ and the second coordinate $y$ equals $\cos \varphi $, where $\varphi$ is the
angle that the outgoing trajectory forms with the positively oriented tangent to $\p\Om$.
The billiard ball map $f$ on $M$ is then defined as \be \label{def:bill-map} \beal
f: \p\Om\times [-1,1]&\to \p\Om\times [-1,1]\\
(s,{y })&\mapsto (s',{y '}), \enal \ee where $s'$ is the coordinate of the point at which the trajectory emanating from
$s$ with angle $\varphi$ collides once again with $\p\Om$ and {$y '=\cos \varphi'$}, where $\varphi'$ is the angle of
incidence of the trajectory with the negatively oriented tangent to $\p\Om$ at $s'$.  The map $f$ is an exact twist
diffeomorphism which preserves the area form {$ds\wedge dy $}.
Let us denote by
\[L(s,s')=\|\gamma(s)-\gamma(s')\|\] the Euclidean distance between the two points on $\partial \Omega$ parameterized by
$s$ and $s'$.  Notice that $L$ is a generating function of the billiard ball map, \ie we have:
\begin{align*}
  \begin{cases}
    \frac{\p L}{\p s}(s,s')=-y \\
    \frac{\p L}{\p s'}(s,s')=\phantom- y'.
  \end{cases}
\end{align*}

Given $\Om_0,\Om_1\in\cS^r$, both normalized and both of length $1$, let $\gamma_0$ and $\gamma_1$ be the corresponding
arc-length parameterizations of their boundaries; let us define $\dist(\Om_0,\Om_1) = \|\gamma_0-\gamma_1\|\nc{r+1}$;
then by the above considerations we gather that for any $\delta' > 0$ there exists a $\delta$ such that if
$\dist(\Om_0,\Om_1) < \delta$, then the corresponding generating functions will also be $C^{r+1}$-close to each other on
the set $\{s\ne s'\}$; hence $\|f_{\Om_0}-f_{\Om_1}\|\nc{r} < \delta'$.

Once that we have defined the billiard map $f$, we can prove a simple but
important property of the Length Spectrum.
\begin{lemma}\label{l_spectrumNullSet}
  For any $\Om\in\cD^r$ with $r\ge 2$, $\LS({\Om})$ has zero Lebesgue measure.
\end{lemma}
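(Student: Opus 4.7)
I would first reduce the claim to a fixed-period statement. The length spectrum admits the decomposition
\[
  \LS(\Om) = \N\cdot\{\perim\Om\}\,\cup\,\bigcup_{q\ge 2}\N\cdot L^{(q)}(\Om),
\]
where $L^{(q)}(\Om)$ denotes the set of lengths of closed geodesics of $\Om$ with exactly $q$ bounces. Since $\N\cdot\{\perim\Om\}$ is countable and both countable unions and integer dilations preserve Lebesgue-nullness, it suffices to show $L^{(q)}(\Om)$ has measure zero for each $q\ge 2$.

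I would then identify $L^{(q)}(\Om)$ with the set of critical values of the length functional
\[
  L_q : U_q \to \R, \qquad L_q(s_1,\dots,s_q) = \sum_{i=1}^q \|\gamma(s_i)-\gamma(s_{i+1})\|
\]
on $U_q = \{(s_1,\dots,s_q)\in(\p\Om)^q : s_i\ne s_{i+1}\}$ (indices mod $q$). Since $\gamma$ is $C^{r+1}$, so is $L_q$, and the reflection law at each vertex is precisely the Euler--Lagrange condition $\nabla L_q = 0$; hence every $q$-bounce closed geodesic is a critical point of $L_q$. For $q \le r+1$ the classical Morse--Sard theorem (which on a $q$-dimensional domain with $1$-dimensional target requires smoothness at least $q$) applies directly to $L_q$ and yields that $L^{(q)}(\Om)$ is null.

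The principal obstacle is the case $q \ge r+2$: the fixed smoothness $r+1$ is below the regularity threshold required by classical Morse--Sard in dimension $q$. I would handle this by descending to the two-dimensional phase space $M = \p\Om\times[-1,1]$. Using the generating-function identities $\p_s L = -y,\ \p_{s'} L = y'$ and telescoping over iterates of $f$, the period-$q$ action
\[
  \Lambda_q(s,y) := \sum_{i=0}^{q-1} L\bigl(\pi_1 f^i(s,y),\,\pi_1 f^{i+1}(s,y)\bigr)
\]
is a $C^r$-function on $M$ whose differential at any $(s,y)$ reduces to $d\Lambda_q = -y\,ds + y_q\,ds_q$, where $(s_q,y_q) = f^q(s,y)$. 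At a point of $P_q := \mathrm{Fix}(f^q)$ this becomes $y(ds_q - ds)$, which vanishes on every tangent vector to $P_q$ (the kernel of $Df^q - \mathrm{Id}$). Consequently $\Lambda_q|_{P_q}$ is locally constant on the smooth part of $P_q$, so its image is countable on that part. For the singular part of $P_q$ (points where $Df^q - \mathrm{Id}$ fails to have maximal rank), I would apply Morse--Sard to the $C^r$ map $f^q - \mathrm{Id}:M\to\R^2$, which in dimension two requires only $r\ge 1$, to conclude that the contribution to $\Lambda_q(P_q)$ from the singular stratum is null.

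The technical heart of the argument is the uniform-in-$q$ descent from the configuration-space critical-value problem (in dimension $q$) to the phase-space picture (in dimension $2$), together with the careful bookkeeping of the singular stratum of $P_q$. Once this is carried out, the null sets obtained for each $q$ combine into a countable union, establishing that $\LS(\Om)$ is Lebesgue-null.
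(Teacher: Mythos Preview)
Your descent to the two-dimensional phase space is the right idea and matches the paper's strategy, but your execution contains a gap, and the case split on $q$ is unnecessary.

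The gap is in your treatment of the singular stratum. Morse--Sard applied to $g:=f^q-\mathrm{Id}:M\to\R^2$ controls only the set of critical \emph{values} of $g$ in $\R^2$; it says nothing about the critical \emph{points} in $M$, and nothing at all about the image of those points under the unrelated scalar function $\Lambda_q$. In particular, $0\in\R^2$ may well be a critical value of $g$, and $P_q\cap\mathrm{Crit}(g)$ can then be a complicated subset of $M$ with no a priori bound on $\Lambda_q\bigl(P_q\cap\mathrm{Crit}(g)\bigr)\subset\R$. So ``apply Sard to $f^q-\mathrm{Id}$ and conclude the singular contribution is null'' does not follow.

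The paper sidesteps this with a small but decisive change in the choice of functional. Instead of your open action $\Lambda_q$ (last edge $s_{q-1}\to s_q$), it uses the \emph{closed} action
\[
  \tilde L_q(s,y)=L(s_0,s_1)+L(s_1,s_2)+\cdots+L(s_{q-1},s_0),\qquad (s_k,y_k)=f^k(s,y),
\]
forcing the polygon to close back to $s_0=s$. This factors as the configuration-space perimeter $L_q(s_0,\dots,s_{q-1})$ composed with the orbit map $(s,y)\mapsto(s_0,\dots,s_{q-1})$. At a $q$-periodic point the closing edge $s_{q-1}\to s_0$ coincides with the genuine billiard edge $s_{q-1}\to s_q$, so the reflection law holds at \emph{every} vertex, including $s_0$; hence $\nabla L_q=0$ in configuration space and, by the chain rule, $d\tilde L_q=0$ in phase space. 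Periodic points are therefore honest critical points of the $C^r$ function $\tilde L_q$ on the $2$-manifold $M$, and a single application of Sard (needing only $r\ge 2$) shows the set of period-$q$ lengths is null---for every $q$, with no stratification of $P_q$ and no separate argument for small $q$.
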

\begin{proof}
  Recall that Sard's Lemma implies that the set of critical values of a real valued
  $C^r$-function defined on an $n$-dimensional manifold has zero Lebesgue
  measure provided that $r\ge n$.  For any $q$, let us define the function
  \begin{align}\label{e_lengthFunctional}
    \tilde L_q(s,y) = L(s_{0},s_{1})+L(s_1,s_2)+\cdots+L(s_{q-1},s_0),
  \end{align}
  where $(s_k,y_k) = f^k(s,y)$.  Periodic orbits of period $q$ of the billiard map correspond to critical
  points of $\tilde L_q$.  Indeed, if the $q$-tuple $(s_0,s_1,\cdots,s_{q-1})$ identifies the vertices of a
  periodic orbit, equality of the angle of reflection and the angle of incidence of the trajectory at any
  given $s_k$ implies that partial derivative of the right hand side of~\eqref{e_lengthFunctional} with
  respect to $s_k$ equals zero.  Since we can express $s_k = s_k(s,y)$, using the chain rule we conclude that
  if $(s,y)$ is a periodic point, then it is a critical point of $\tilde L_q(s,y)$; the set of lengths of such
  orbits thus corresponds to the set of critical values of $\tilde L_q$.  Since the billiard map $f$ is $C^r$
  with $r \ge 2$, we conclude that the set of lengths of periodic orbits of period $q$ has zero Lebesgue
  measure; by taking the (countable) union over $q$ we conclude that $\LS(\Om)$ has zero Lebesgue measure.
\end{proof}
\begin{iremark}
  Note that it is possible to construct (non-generic) examples of smooth domains $\Om$ whose length spectrum has
  positive Hausdorff dimension.
\end{iremark}
The above lemma will be used to impose constraints on isospectral families by means of the following immediate corollary
\begin{corollary}\label{c_isospectral}
  Let $\family\Om$ be a family of domains in $\cD^r$ and let $\Delta:\parball\to\R$ be a Darboux function\footnote{ Recall that
    a function is said to be \emph{Darboux} if it has the intermediate value property.  In particular the corollary
    applies to continuous functions. } so that $\Delta(\famt)\in\LS(\Om_\famt)$.  If $\family\Om$ is isospectral, then
  $\Delta$ is constant.
\end{corollary}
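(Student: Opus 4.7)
The plan is to exploit two facts in combination: first, the isospectrality assumption collapses the family of length spectra to a single set; second, Lemma~\ref{l_spectrumNullSet} tells us that this set is small (Lebesgue null). The Darboux hypothesis on $\Delta$ then forces constancy, because a function with the intermediate value property cannot have an image contained in a null set unless it is constant.

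More concretely, I would proceed as follows. First, by the isospectrality of $\family\Om$, there is a single set $\LS_0 := \LS(\Om_0)$ such that $\LS(\Om_\famt) = \LS_0$ for every $\famt\in\parball$. Hence the image of $\Delta$ is contained in $\LS_0$. By Lemma~\ref{l_spectrumNullSet}, $\LS_0$ has zero Lebesgue measure.

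Next, suppose for contradiction that $\Delta$ is not constant, so that there exist $\famt_1,\famt_2\in\parball$ with $\Delta(\famt_1)\ne\Delta(\famt_2)$; without loss of generality $\Delta(\famt_1)<\Delta(\famt_2)$. Since $\Delta$ is a Darboux function, for every value $v$ in the interval $[\Delta(\famt_1),\Delta(\famt_2)]$ there exists some $\famt\in\parball$ with $\Delta(\famt)=v$. Thus
\begin{equation*}
  [\Delta(\famt_1),\Delta(\famt_2)] \;\subset\; \operatorname{Image}(\Delta) \;\subset\; \LS_0.
\end{equation*}
But the left-hand side is a nondegenerate interval and so has positive Lebesgue measure, while the right-hand side has zero Lebesgue measure, a contradiction. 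Hence $\Delta$ is constant.

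There is no real obstacle here; the only subtlety is recognizing that Darboux plus ``image in a null set'' immediately gives constancy, which is the whole point of introducing the Darboux property in the statement. The proof is essentially a two-line deduction from the previous lemma.
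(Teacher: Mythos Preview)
Your proof is correct and follows essentially the same approach as the paper: both argue by contradiction, use the Darboux property to produce a nondegenerate interval inside the image of $\Delta$, and then invoke Lemma~\ref{l_spectrumNullSet} together with isospectrality to conclude that this interval lies in the null set $\LS(\Om_0)$.
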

\begin{proof}
  Assume that there exists $\famt\in\parball$ such that $\Delta(\famt)\ne\Delta(0)$; since $\Delta$ is Darboux, we
  conclude that $\Delta([0,\famt])$ contains an open set.  Since the family is isospectral,
  $\Delta(\famt)\in\LS(\Om_\famt) = \LS(\Om_0)$; we conclude that $\Delta([0,\famt])\subset\LS(\Om_0)$, but this
  contradicts Lemma~\ref{l_spectrumNullSet}.
\end{proof}
In the sequel we assume $\family\Om$ to be fixed together with a parameterization $\gamma$; without risk of confusion we
thus drop all subscripts $\gamma$.  The symbol $\Om$ simply denotes an arbitrary element of the family.
Let us start with a simple case: let $\Delta_0(\famt)$ denote the perimeter of $\Om_\famt$, that is:
\begin{align*}
  \Delta_0(\famt) &= \int_0^1 \|\p_s\gamma(\famt,\apar)\|d\apar.
\end{align*}
By definition, $\Delta_0$ is continuous and $\Delta_0(\famt)\in\LS(\Om_\famt)$; we conclude by
Corollary~\ref{c_isospectral} that $\Delta_0$ is constant; hence:
\begin{align*}
  0 &= \Delta_0'(\famt) = \int_0^1\langle\p_{\famt \apar}\gamma(\famt,\apar),T(\famt,\apar)\rangle d\apar
\end{align*}
where $T(\famt,\apar) = \p_s\gamma(\famt,\apar)/\|\p_s\gamma(\famt,\apar)\|$ is the positively oriented unit tangent vector
to $\Om_\famt$ at the point $\gamma(\famt,\apar)$.  Integrating by parts we obtain:
\begin{align*}
  0 &= -\int_0^1\langle\p_\famt\gamma(\famt,\apar),\p_s T(\famt,\apar)\rangle d\apar = \\
    &= \int_0^1 \frac{n(\famt,\apar)}{\rho_{\Om_\famt}(\apar)}\frac{ds}{d\apar}d\apar
\end{align*}
where $\frac{ds}{d\apar}$ accounts for the change of variable from arc-length $s$ to $\apar$ {and, recall,
  $\rho_{\Om_\tau}$ is the radius of the curvature of $\partial\Om_\tau$}.

For any $\Om$ (parameterized by $\apar$), we define the linear functional
\begin{align}\label{e_definitionellp}
  \lfunc{\Om,0}(\nu) = \int_0^1 \frac{\nu(\apar)}{\rho_\Om(\apar)}\frac{ds}{d\apar}d\apar.
\end{align}
By our above discussion we conclude that if $\family\Om$ is isospectral, then for any $\famt\in\parball$ we have
$\lfunc{\Om_\famt,0}(n(\famt,\cdot)) = 0$.

We will now proceed to define a sequence of functionals that are related to the variation of lengths of a special class
of periodic orbits of the billiard map.  Consider a periodic orbit of period $q$ and let $p\in\Z$ denote its winding
number.\footnote{ We can define the winding number as the number of times that the associated polygon wraps around the
  boundary $\p\Om$; alternatively, by considering a lift $\hat f$ of $f$ to the universal cover $\R\times[-1,1]$, we
  have that $\hat f^q(\hat s,y) = (\hat s+p,y)$, where $p\in\Z$ defines the winding number.}  Then we define the
\emph{rotation number} of the orbits as the ratio $p/q$.  The following lemma is a simple consequence of the fact that
$\Om$ has $\Z_2$-symmetry.
\begin{lemma}\label{orbit}
  Let $\Om\in\cS^r$; for any $q\ge 2$, there exists a periodic orbit of rotation number $1/q$ passing through the marked
  point of $\p\Om$ and having maximal length among other periodic orbits passing through the marked point.  We call such
  an orbit \emph{marked symmetric maximal periodic orbit} and denote it by $S^q(\Om)$.
\end{lemma}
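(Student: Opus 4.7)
The approach will be variational: I would realize $S^q(\Om)$ as a maximizer of the $q$-gon perimeter functional restricted to the set of symmetric inscribed $q$-gons through the marked point, and then use the axial symmetry to upgrade the constrained critical point to a genuine billiard periodic orbit. Fix an arc-length parameterization of $\p\Om$ so that $\refl$ acts as $s \mapsto 1 - s$; then the marked point is at $s = 0$ and the auxiliary point at $s = 1/2$. Let $Y_q^0$ be the open simplex of configurations $0 < s_1 < \cdots < s_{q-1} < 1$, identified with the set of inscribed $q$-gons in $\Om$ with one vertex at the marked point. On $Y_q^0$ the perimeter
\[
L_q(s_1,\ldots,s_{q-1}) = L(0,s_1) + L(s_1,s_2) + \cdots + L(s_{q-1},0)
\]
is smooth, and its critical points are exactly the configurations satisfying the billiard reflection law at all of $s_1,\ldots,s_{q-1}$.

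The involution $\refl$ induces a linear isometry of $Y_q^0$ by $(s_1,\ldots,s_{q-1}) \mapsto (1-s_{q-1},\ldots,1-s_1)$, under which $L_q$ is invariant. Its fixed-point set $X_q$ consists of the $\refl$-symmetric $q$-gons through the marked point: the constraints $s_{q-k} = 1 - s_k$ reduce $X_q$ to the open simplex $\{0 < s_1 < \cdots < s_d < 1/2\}$ with $d = \lfloor (q-1)/2 \rfloor$ (and, in the case $q = 2m$, the additional fixed vertex $s_m = 1/2$). Since $\overline{X_q}$ is compact and $L_q$ is continuous, the restriction $L_q|_{\overline{X_q}}$ attains a maximum at some $\svect^*$.

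The key non-degeneracy step is to argue that $\svect^*$ lies in the interior of $X_q$. The boundary $\p X_q$ consists of degenerate configurations where some symmetric pair of adjacent vertices has collapsed (or where $s_1 = 0$, or where $s_d = 1/2$ in the even case). From any such degenerate point I would show that spreading apart a collided symmetric pair yields a nearby configuration in $X_q$ with strictly greater perimeter, by invoking the strict triangle inequality applied to three non-collinear points of $\p\Om$, which is guaranteed by strict convexity. Granted this, Palais' principle of symmetric criticality applied to the linear isometric $\Z_2$ action on $Y_q^0$ promotes $\svect^*$ to a critical point of $L_q$ on the ambient space $Y_q^0$, which gives the billiard reflection law at $s_1^*,\ldots,s_{q-1}^*$.

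Finally, the reflection law at the marked point $s_0 = 0$ follows from symmetry alone, without any variational argument: the tangent line to $\p\Om$ at $s_0$ is orthogonal to the symmetry axis (the axis is $\refl$-invariant and passes through $s_0$), and the identification $s_{q-1}^* = 1 - s_1^*$ forces the incoming edge from $s_{q-1}^*$ and the outgoing edge to $s_1^*$ to be mirror images of each other through the axis; they therefore make equal angles with the tangent at $s_0$. This completes the reflection law, so $\svect^*$ realizes a genuine periodic billiard orbit of period $q$ and rotation number $1/q$ through the marked point, maximal by construction among symmetric $q$-gons (hence among periodic orbits of this type through that vertex). The principal obstacle I anticipate is the non-degeneracy step: while the strict triangle inequality supplies the heuristic, one must verify that the symmetric spreading perturbation stays inside $X_q$ and strictly increases $L_q$ on every boundary stratum, including the boundary faces where $s_1 \to 0$ (i.e.\ an interior vertex collides with the marked point) and, for even $q$, where $s_{d} \to 1/2$ (collision with the auxiliary point).
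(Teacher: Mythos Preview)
Your proposal is correct and follows essentially the same variational approach as the paper: maximize the perimeter over symmetric inscribed $q$-gons through the marked point, use strict convexity (via the strict triangle inequality) to rule out boundary degenerations, and then exploit the $\Z_2$-symmetry to obtain the reflection law at the constrained vertices. The only packaging difference is that the paper splits into even/odd $q$ and verifies the Euler--Lagrange equations by hand on the reduced half-configuration, whereas you treat both parities uniformly and invoke Palais' symmetric criticality to pass from the constrained maximum on $X_q$ to a genuine critical point on $Y_q^0$; both routes yield the same orbit.
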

\begin{proof}
  Let us recall that $s$ denotes the parameterization in arc-length; we distinguish the cases of even and odd period.
  \vskip 0.1in \textbf{Case 1:} $q=2k$ is even.  We claim there exists a $q$-periodic orbit passing through the marked
  point and the auxiliary point.  Indeed, let us fix $s_0=0$ and $s_{k}=1/2$ and consider the problem of maximizing the
  function
  \begin{subequations}
    \label{e_variationalProblems}
    \begin{align}
      L_q(\svect):=\label{var1}2\sum_{i=0}^{k-1} L(s_i,s_{i+1})
    \end{align}
    where $\svect=(s_1,\cdots,s_{k-1})$ belongs to the compact set $0 = s_0\le s_1\le\cdots\le s_{k-1}\le s_k = 1/2$.
    Let $\bar\svect = (\bar{s}_1,\dots,\bar{s}_{k-1})$ be a maximum of $L_q$; observe that by the triangle inequality
    and strict convexity we have $\bar s_0 < \bar s_1 < \cdots < \bar s_{k-1} < \bar s_k$.  If we fix conventionally
    $\bar{s}_0=0$, $\bar{s}_{k}=1/2$:
    \begin{align*}
      \p_1 L(\bar{s}_i,\bar{s}_{i+1})=-\p_2L(\bar{s}_{i-1},\bar{s}_{i}),\quad i=1,\dots,k-1.
    \end{align*}

    Completing $\bar{s}_{2k-i}=-\bar{s}_i$, $i=1,\dots,k-1$, we obtain a periodic orbit of period $2k = q$, which is of
    maximal length among symmetric orbits.

\vskip 0.1in

    \textbf{Case 2:} $q=2k+1$ is odd.  We claim there exists a periodic orbit passing through the marked point and so
    that the segment $\overline{\gamma(s_k)\gamma(s_{k+1}) }$ is perpendicular to the symmetry axis.  Indeed, let us fix
    $s_0=0$ and consider the problem of maximizing the function
    \begin{align}
      \label{var2}L_q(\svect):=\sum_{i=0}^{k-1}
	2L(s_i,s_{i+1})+ L(s_k,-s_{k}),
    \end{align}
  \end{subequations}
  where $\svect=(s_1,\cdots,s_{k})$ belongs to the compact set $0 =s_0\le s_1\le\cdots\le s_{k}\le 1/2$.  Once again by
  the triangle inequality and strict convexity, the maximum is attained at a critical point
  $\bar\svect = (\bar{s}_1,\dots,\bar{s}_{k})$ so that $0 < \bar s_1 < \cdots < \bar s_{k-1} < 1/2$.  Moreover, if we
  fix conventionally $\bar{s}_0=0$, we have
  \begin{align*}
    0&=\p_1 L(\bar{s}_i,\bar{s}_{i+1})+\p_2L(\bar{s}_{i-1},\bar{s}_{i}),\quad i=1,\dots,k-1\\
    0&=\p_2 L(\bar{s}_{k-1},\bar{s}_k)+\frac{1}{2}\p_1 L(\bar{s}_k,-\bar{s}_k)-\frac{1}{2}\p_2 L(\bar{s}_k,-\bar{s}_k)\\
     &=\p_2 L(\bar{s}_{k-1},\bar{s}_k)+ \p_1 L(\bar{s}_k,-\bar{s}_k).
  \end{align*}
  Completing $\bar{s}_{2k+1-i}=-\bar{s}_i$, $i=1,\dots k-1$, we obtain a periodic orbit of period $2k+1 = q$ which is of
  maximal length amongst all symmetric orbits.
\end{proof}
Let us define $L(\famt,s,s')=\|\gamma(\famt,s)-\gamma(\famt,s')\|$.  For $q\geq 2$, let $\Lq(\famt;\svect)$ denote the
function defined in~\eqref{e_variationalProblems} for $\Om = \Omega_\famt$.  Correspondingly, let $\Delta_q(\famt)$
denote the length of the marked symmetric maximal periodic orbits of rotation number $1/q$ for the domain
$\Omega_\famt$, that is:
\begin{align*}
  \Delta_q(\famt)=\max_{\svect}\Lq(\famt;\svect).
\end{align*}
\begin{lemma}
  For any $q\geq 2$, the function $\Delta_q(\famt): \parball\to \R$ is a Lipschitz function.
\end{lemma}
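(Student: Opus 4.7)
The plan is to exploit that $\Delta_q$ is a supremum of a uniformly Lipschitz family of functions, and hence itself Lipschitz. The strategy follows the standard envelope-type argument.

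First, I would observe that $\Delta_q(\famt) = \max_{\svect \in K} \Lq(\famt;\svect)$, where $\svect$ ranges over a fixed compact set $K$ (the set of ordered tuples in $[0,1/2]$ of appropriate length, independent of $\famt$). Consequently, for any two parameter values $\famt_1, \famt_2 \in \parball$, choosing an optimizer $\svect^\star = \svect^\star(\famt_1)$ gives
\[
  \Delta_q(\famt_1) - \Delta_q(\famt_2) \le \Lq(\famt_1;\svect^\star) - \Lq(\famt_2;\svect^\star),
\]
and symmetrically. Hence it suffices to bound $|\Lq(\famt_1;\svect) - \Lq(\famt_2;\svect)|$ uniformly in $\svect \in K$ by a constant multiple of $|\famt_1 - \famt_2|$.

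For this, I would go to the level of a single edge. Since $\family\Om$ is a $C^1$ family, $\partial_\famt \gamma$ exists and is continuous on the compact set $\parball \times \S$, so $C := \sup_{\parball \times \S}\|\partial_\famt \gamma\|$ is finite. Using the reverse triangle inequality on the Euclidean norm, for any $s,s' \in \S$,
\[
  |L(\famt_1,s,s') - L(\famt_2,s,s')| \le \|\gamma(\famt_1,s) - \gamma(\famt_2,s)\| + \|\gamma(\famt_1,s') - \gamma(\famt_2,s')\| \le 2C\,|\famt_1 - \famt_2|.
\]
Note this bound is valid even at the diagonal $s=s'$, so no issue arises from the non-smoothness of $L$ there. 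Summing over the (at most $2q$) edges in the definition~\eqref{e_variationalProblems} of $\Lq$, we get $|\Lq(\famt_1;\svect) - \Lq(\famt_2;\svect)| \le 4qC\,|\famt_1 - \famt_2|$ uniformly in $\svect$. Combined with the envelope estimate above, this yields $|\Delta_q(\famt_1) - \Delta_q(\famt_2)| \le 4qC\,|\famt_1 - \famt_2|$, proving the Lipschitz property with constant depending only on $q$ and on the $C^1$-norm of $\gamma$ in $\famt$.

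I do not expect a real obstacle here; the only mild subtlety worth flagging is that $L(\famt,s,s')$ fails to be smooth on the diagonal $\{s=s'\}$, so one might be tempted to restrict to the open region where the maximum is attained. The triangle-inequality bound above, however, sidesteps this entirely by never differentiating $L$: it only uses continuity of $\partial_\famt \gamma$. Note also that nowhere did I need the explicit form of the optimizer $\svect^\star(\famt)$ (established in Lemma~\ref{orbit}), nor its continuity in $\famt$, both of which would be harder to obtain and are not needed for Lipschitz control of the maximum value alone.
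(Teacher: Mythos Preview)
Your proposal is correct and follows essentially the same envelope argument as the paper: fix a maximizer $\bar{\svect}$ at one parameter value, use $\Lq(\famt';\bar{\svect})\le\Delta_q(\famt')$ to get a one-sided bound, then symmetrize. The only cosmetic difference is that the paper controls $|\Lq(\famt;\bar\svect)-\Lq(\famt';\bar\svect)|$ via $K_q:=\max_{\famt,\svect}\|\partial_\famt\Lq(\famt;\svect)\|$, whereas you obtain the same edge-level bound from the reverse triangle inequality applied to $\|\gamma(\famt,s)-\gamma(\famt,s')\|$; your route has the small advantage of making the diagonal issue manifestly irrelevant.
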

\begin{proof}
 Define
  \begin{align*}
    K_q:=\max_{\famt\in \parball}\max_{\svect} \left\|{\p_\famt\Lq(\famt;\svect)}\right\|.
  \end{align*}
  Let us fix arbitrarily $\famt\in \parball$, and let $\bar{\svect}$ realize the maximum of $\Lq(\famt;\svect)$, that is
  $\Lq(\famt,\bar{\svect})=\Delta_q(\famt)$. Then $\Lq(\famt';\bar{\svect})\leq \Delta_q(\famt')$, for any
  $\famt'\in \bar{U}'$, hence
  \begin{align*}
    \Delta_q(\famt)-\Delta_q(\famt')\leq \Lq(\famt;\bar{\svect})-\Lq(\famt';\bar{\svect})\leq K_q|\famt-\famt'|
  \end{align*}
  Exchanging $\famt$ and $\famt'$ in the above inequality, we can thus conclude that:
  \begin{equation*}
    |\Delta_q(\famt)-\Delta_q(\famt')|\leq K_q|\famt-\famt'|.\qedhere
  \end{equation*}
\end{proof}
Observe that, by definition, $\Delta_q(\famt)\in\LS(\Om_\famt)$ and thus, by Corollary~\ref{c_isospectral}, if
$\family\Om$ is isospectral, then $\Delta_q$ is constant.  There is an obvious obstacle that arises when one tries to
apply the strategy that we employed above with $\Delta_0$: a priori $\Delta_q$ is not differentiable.  We therefore need
to consider a slightly more general approach.  Given a continuous function $\Phi:\parball\subset \R\to \R$, one can
define its upper (\resp lower) differential $D^+\Phi(\famt)$ (\resp $D^-\Phi(\famt)$), which is characterized as
follows: $p\in D^+\Phi(\famt)$ (\resp $p\in D^-\Phi(\famt)$) if and only if there exists a function
$G\in C^1(\parball,\R)$, such that $G'(\famt)=p$ and $G\geq \Phi$ (resp. $G\leq \Phi$) in a neighborhood of $\famt$,
with equality at $\famt$. Both $D^+\Phi(\famt)$ and $D^-\Phi(\famt)$ are convex subset of $\R$; they are both non empty
if and only if $\Phi$ is differentiable, and in this case $D^+\Phi(\famt)=D^-\Phi(\famt)=\{\Phi'(\famt)\}$.

\begin{lemma}\label{upp-dif}
  If $\bar{\svect}$ is a point realizing the maximum, i.e.:
  \begin{align*}
    \Lq(\famt;\bar\svect) = \max_{\svect} \Lq(\famt;\svect) =  \Delta_q(\famt),
  \end{align*}
  then we have $\p_{\famt} \Lq(\famt;\bar{\svect})\in D^-\Delta_q(\famt)$.
\end{lemma}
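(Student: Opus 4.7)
The plan is to exhibit an explicit $C^1$ function that touches $\Delta_q$ from below at $\famt$ and whose derivative at $\famt$ equals $\p_\famt L_q(\famt;\bar\svect)$; by the very definition of the lower differential this yields membership in $D^-\Delta_q(\famt)$.

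The natural candidate is to freeze the $\svect$-variable at the maximizer $\bar\svect$ and vary only $\famt'$. Concretely, define
\begin{equation*}
  G(\famt') = L_q(\famt';\bar\svect).
\end{equation*}
Since $\gamma$ is $C^1$ in $\famt$ and smooth in the boundary parameter, the function $L_q(\cdot;\bar\svect)$ is of class $C^1$ in a neighborhood of $\famt$, so $G\in C^1(\parball,\R)$, and $G'(\famt) = \p_\famt L_q(\famt;\bar\svect)$.

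It remains to verify the two inequality/equality conditions. By the assumption that $\bar\svect$ realizes the maximum at time $\famt$ we have
\begin{equation*}
  G(\famt) = L_q(\famt;\bar\svect) = \max_\svect L_q(\famt;\svect) = \Delta_q(\famt),
\end{equation*}
and for any $\famt'$ in $\parball$ (in particular in a neighborhood of $\famt$),
\begin{equation*}
  G(\famt') = L_q(\famt';\bar\svect) \le \max_\svect L_q(\famt';\svect) = \Delta_q(\famt').
\end{equation*}
Thus $G\le \Delta_q$ near $\famt$ with equality at $\famt$, and by definition $G'(\famt) \in D^-\Delta_q(\famt)$, which proves the claim.

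There is essentially no obstacle here: the only thing to check is the smoothness of $\famt\mapsto L_q(\famt;\bar\svect)$, which follows directly from the $C^1$-regularity of the family $\family\Om$ together with the smoothness of the Euclidean distance away from the diagonal (the vertices of the marked symmetric maximal periodic orbit are distinct, so no diagonal issues arise). Note that the argument is completely insensitive to whether $\bar\svect$ is the unique maximizer; in fact, if there are several maximizers, each of them yields an element of $D^-\Delta_q(\famt)$ by the same construction, recovering the expected convexity of $D^-\Delta_q(\famt)$ via convex combinations.
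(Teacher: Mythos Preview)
Your proof is correct and follows essentially the same approach as the paper: freeze $\svect$ at the maximizer $\bar\svect$, observe that $G(\famt')=L_q(\famt';\bar\svect)$ is $C^1$, lies below $\Delta_q$ with equality at $\famt$, and invoke the definition of $D^-\Delta_q(\famt)$. The paper's proof is slightly terser but otherwise identical.
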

\begin{proof}
  Since $\Delta_q(\cdot)$ is the maximum of $\Lq(\cdot,\svect)$, if $\bar\svect$ realizes the maximum at $\famt$ we have
  $\Lq(\famt',\bar{\svect})\leq \Delta_q(\famt')$ for any $\famt'\in\parball$.  Noticing that $\Lq(\cdot;\bar{\svect})$
  is a $C^1$ function, by definition of lower differential, we conclude that
  $p=\p_\famt \Lq(\famt;\bar{\svect})\in D^-\Delta_q(\famt)$.
\end{proof}
\begin{iremark} Indeed, one could show that $\Delta_q(t)$ is a semi-convex function and
  \begin{align*}
    D^-\Delta_q(\famt)=\text{co}\{\p_\famt \Lq(\famt;\bar{\svect})|\Lq(\famt;\bar{\svect})=\Delta_q(\famt)\}.
  \end{align*}
\end{iremark}

Now let $\Om\in\cS^r$ parameterized by $\apar$ and assume we fixed $S^q(\Om) = (\xi_q^k,\varphi_q^k)_{k=0}^{q-1}$ a
maximal marked symmetric periodic orbit of rotation number $1/q$; then we define the functional $\ell_{\Om,q}$ as
follows: for any continuous function $\nu:\S\to\R$ we let
\begin{align}\label{e_definitionEllq}
  \ell_{\Om,q}(\nu) : = \sum_{k=0}^{q-1}\nu(\apar_q^k)\sin\varphi_q^k.
\end{align}
\begin{iremark}
  These functionals can, of course, be defined for any periodic orbit
  (rather than only for marked symmetric maximal orbits).  Since we will not use
  non-symmetric orbits for the proof of our Main Theorem, we find simpler to use the above definition.
\end{iremark}
\begin{proposition}\label{deform}
  Let $\family\Om$ be an isospectral family, then for any $\famt\in\parball$,
  $q \ge 2$ and having fixed arbitrarily $\bar S^q_\famt$ a maximal marked
  symmetric periodic orbit for $\Omega_{\famt}$, we have
  $\ell_{\Om_\famt,q}(n(\famt,\cdot)) = 0$.
\end{proposition}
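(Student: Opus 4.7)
The plan has two steps: first, show that $\Delta_q(\famt)$ is constant in $\famt$ along an isospectral family, and invoke Lemma~\ref{upp-dif} to conclude $\p_\famt\Lq(\famt;\bar\svect)=0$ at any maximizer $\bar\svect$; second, compute this derivative from the first variation of the polygon perimeter and match it to $2\,\ell_{\Om_\famt,q}(n(\famt,\cdot))$ using the reflection law.

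For the first step, $\Delta_q:\parball\to\R$ is Lipschitz (hence a Darboux function) and $\Delta_q(\famt)\in\LS(\Om_\famt)$, so Corollary~\ref{c_isospectral} applied to the isospectral family $\family\Om$ forces $\Delta_q$ to be constant. Consequently $\Delta_q$ is differentiable everywhere with derivative $0$, so $D^-\Delta_q(\famt)=\{0\}$, and Lemma~\ref{upp-dif} yields $\p_\famt\Lq(\famt;\bar\svect)=0$.

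For the second step, I use the $\Z_2$-symmetry of $\Om_\famt$ to extend the symmetric half-orbit of free parameters $\bar\svect$ to a full $q$-gon with vertices $\{\gamma(\famt,s_i)\}_{i=0}^{q-1}$ (with $s_{q-i}=-s_i$ in the even case and analogously in the odd case). Then $\Lq(\famt;\bar\svect)$ equals the cyclic perimeter $\sum_{i=0}^{q-1}L(\famt,s_i,s_{i+1})$ (in the odd-$q$ case the ``bouncing-ball'' term $L(s_k,-s_k)$ in \eqref{var2} is exactly the self-symmetric contribution needed to close the cycle). Differentiating each $L(\famt,s_i,s_{i+1})$ in $\famt$ with the $s_i$'s fixed, using
\[
  \p_\famt\|\gamma(\famt,s')-\gamma(\famt,s)\|
  =\langle u_{s,s'},\,\p_\famt\gamma(\famt,s')-\p_\famt\gamma(\famt,s)\rangle
\]
with $u_{s,s'}$ the unit vector from $\gamma(\famt,s)$ to $\gamma(\famt,s')$, and regrouping by vertex, gives
\[
  \p_\famt\Lq(\famt;\bar\svect)
  =\sum_{i=0}^{q-1}\langle u_{i-1,i}-u_{i,i+1},\,\p_\famt\gamma(\famt,s_i)\rangle.
\]
By Lemma~\ref{orbit} the maximizer $\bar\svect$ corresponds to a genuine billiard orbit, so the reflection law gives $u_{i-1,i}-u_{i,i+1}=2\sin\varphi_i\,N_i$ at each vertex, where $N_i$ is the outward unit normal and $\varphi_i$ the reflection angle. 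Since $\langle N_i,\p_\famt\gamma(\famt,s_i)\rangle = n(\famt,s_i)$ by definition, the above sum equals $2\,\ell_{\Om_\famt,q}(n(\famt,\cdot))$, and combining with the first step proves the claim.

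The only delicate point is the bookkeeping to rewrite the half-sum in \eqref{e_variationalProblems} as the cyclic sum over all $q$ edges; this rests on the $\Z_2$-symmetry of $\Om_\famt$ and the normalization of $\gamma$, which together ensure that $\gamma(\famt,-\apar)$ is the reflection of $\gamma(\famt,\apar)$ so that each reflected edge contributes exactly as its original partner. Beyond this, the argument is a standard first-variation computation for the perimeter of an inscribed polygon.
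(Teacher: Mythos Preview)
Your proof is correct and follows essentially the same approach as the paper: invoke Corollary~\ref{c_isospectral} to get $\Delta_q$ constant, use Lemma~\ref{upp-dif} to conclude $\p_\famt\Lq(\famt;\bar\svect)=0$, then compute this derivative as the first variation of the cyclic perimeter and identify it with $2\,\ell_{\Om_\famt,q}(n(\famt,\cdot))$ via the reflection law. The paper writes $\Lq(\famt;\bar\svect)=\sum_{k=0}^{q-1}L(\famt,\bar s_q^k,\bar s_q^{k+1})$ directly and proceeds just as you do; your explicit remark on the half-sum versus cyclic-sum bookkeeping is a harmless elaboration of what the paper leaves implicit.
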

\begin{proof}
  Let us fix $\famt\in\parball$ arbitrarily.  To ease our notation let us write $S^q = S^q_\famt$.  By assumption we
  have that the point $\bar\svect$ that corresponds to $\bar S^q$ is a maximum, \ie
  $\Delta_q(\famt)=\max_{\svect}\Lq(\famt;\svect)=L^q(\famt,\bar\svect)$.  Then by Lemma~\ref{upp-dif}:
  \begin{align*}
    \p_\famt \Lq(\famt;\bar{\svect})\in D^-\Delta_q(\famt)=\{0\}.
  \end{align*}
  In particular for $\bar S^q = (\bar s_q^k,\bar \varphi_q^k)_{k=0}^{q-1}$, since
  $  \Lq(\famt;\bar{\svect})=\sum_{k=0}^{q-1}L(\famt,\bar s_q^k,\bar s_q^{k+1})$
  and observing that
\begin{align*}
  {\p_\famt}L(\famt,s,s')
  &= {\p_\famt}\|\gamma(\famt,s)-\gamma(\famt,s')\|\\
  &=\frac{\gamma(\famt,s)-\gamma(\famt,s')}{\|\gamma(\famt,s)-\gamma(\famt,s')\|} \cdot%
    [\p_\famt\gamma(\famt,s)-\p_\famt\gamma(\famt,s')]
  \end{align*}
  we get
  \begin{align*}
    0 & = {\p_\famt} \Lq(\famt;\bar{\svect}) %
        = {\p_\famt}\sum_{k=0}^{q-1}L(\famt,\bar s_q^k,\bar s_q^{k+1})\\
      & = \sum_{k = 0}^{q-1}
        \left[\frac{\gamma(\famt,\bar
        s_q^k)-\gamma(\famt,\bar s_q^{k-1})}{\|\gamma(\famt,\bar s_q^k)-\gamma(\famt,\bar s_q^{k-1})\|}-\frac{\gamma(\famt,\bar s_q^{k+1})-\gamma(\famt,\bar s_q^k)}{\|\gamma(\famt,\bar s_q^{k+1})-\gamma(\famt,\bar s_q^k)\|}
        \right]\cdot\p_\famt\gamma(\famt,\bar s_q^k)    \\
      & = \sum_{k=0}^{q-1} 2\sin\bar\varphi_{q}^k\Norm(\famt,\bar s_q^k)\cdot\p_\famt\gamma(\famt,\bar s_q^k)\\
      & = 2 \sum_{k=0}^{q-1} n(\famt,\bar s_q^k)\sin\bar \varphi_q^k,
  \end{align*}
  which concludes the proof.
\end{proof}
\begin{remark}
  Let $S=\{(\famt,{\p_\famt}\Lq(\famt;\svect))| \p_{\svect} \Lq(\famt,\svect)=0\}, U=\parball$, then $S\subset T^*U$ is a
  Lagrangian submanifold.  Let $\pi: T^*U\to U$ denote the natural projection, then $\Lq(t;\cdot)$ is a Morse function
  (critical points are non-degenerate) if and only if $\famt$ is a regular value of the map $\pi|_{S}$. The set $U_1$ of
  such values is an open set, and by Sard's theorem, it has full measure. Furthermore, the set $U_0\subset U_1$ of
  $\famt$ such that $\Lq(\famt;\cdot)$ is an excellent Morse function (Morse function whose critical points have
  pairwise distinct critical values) is also an open subset of full measure. For any $\famt_0\in X_0$, the critical
  points of $\Lq(\famt;\cdot)$ depends smoothly on $\famt$ within a sufficiently small neighborhood of $\famt_0$. Hence
  we have
  \begin{align*}
    \Delta'_q(\famt)=2\ell_{S^q}(n(\famt,\cdot)),\quad \famt\in U_0
  \end{align*}
  for an arbitrary (\ie not necessarily isospectral) $C^2$ deformation $\gamma$.
\end{remark}

We now define conventionally the additional functional $\lfunc{\Om,1}(\nu)$ as the evaluation of the function $\nu$ at the
marked point $s = 0$, that is we simply let
\begin{align*}
  \lfunc{\Om,1}(\nu) = \nu(0).
\end{align*}
Observe that if $\family\Om$ is a normalized family, then the marked point is fixed at the origin and, therefore,
$\lfunc{\Om_\famt,1}(n(\famt,\cdot)) = 0$.  We summarize our findings in the following statement.
\begin{corollary}\label{c_deform}
  Let $\family\Om$ be a normalized isospectral family, then for any $q\ge 0$ we have
  $\lfunc{\Om_\tau,q}(n_\gamma(\famt,\cdot)) = 0$ for any $\famt\in\parball$.
\end{corollary}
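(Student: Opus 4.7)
The plan is to simply collate the three cases that have already been established in the preceding discussion, which together cover all $q \ge 0$. The corollary is a packaging statement rather than a new result, so the proof amounts to quoting the right ingredient for each regime of $q$.

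First I would handle $q = 0$. The perimeter $\Delta_0(\famt)$ lies in $\LS(\Om_\famt)$ and is continuous in $\famt$, hence by Corollary~\ref{c_isospectral} it is constant. Differentiating the arc-length integral for $\Delta_0$ and integrating by parts, as carried out just after the definition of $\lfunc{\Om,0}$ in~\eqref{e_definitionellp}, yields exactly $\lfunc{\Om_\famt,0}(n(\famt,\cdot)) = 0$ for every $\famt \in \parball$.

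Next I would treat $q = 1$. Here there is essentially nothing to prove: the functional $\lfunc{\Om,1}(\nu)$ was defined as $\nu(0)$, and the normalization assumption on $\family\Om$ forces the marked point to remain at the origin of $\R^2$ for every $\famt$. Consequently $\p_\famt \gamma(\famt,0) = 0$, so in particular $n(\famt,0) = \dotProd{\p_\famt\gamma(\famt,0)}{\Norm(\famt,0)} = 0$, which is exactly $\lfunc{\Om_\famt,1}(n(\famt,\cdot)) = 0$.

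Finally, for each $q \ge 2$, Lemma~\ref{orbit} guarantees the existence of a marked symmetric maximal periodic orbit $S^q(\Om_\famt)$, so the functional $\ell_{\Om_\famt,q}$ of~\eqref{e_definitionEllq} is well-defined. Proposition~\ref{deform}, whose proof combined Lemma~\ref{upp-dif} (applied to the Lipschitz function $\Delta_q$) with the standard variational formula for the derivative of the length functional along the orbit, gives $\ell_{\Om_\famt,q}(n(\famt,\cdot)) = 0$ for every $\famt\in\parball$ and every choice of maximal marked symmetric orbit. Assembling the three cases finishes the proof; there is no real obstacle, the only subtle point being the use of the lower differential in the $q \ge 2$ case to bypass the a priori lack of differentiability of $\Delta_q$, which is already dealt with inside Proposition~\ref{deform}.
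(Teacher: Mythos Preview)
Your proposal is correct and matches the paper's approach exactly: the corollary is presented there as a summary (``We summarize our findings in the following statement'') with no separate proof, and the three ingredients you cite for $q=0$, $q=1$, and $q\ge 2$ are precisely the ones established in the preceding discussion.
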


Let us now define the space of $C^r$-smooth even functions
\begin{align*}
  \defspace = \{\nu\in C^{r}(\S)\st \nu(\apar) = \nu(-\apar)\}.
\end{align*}
We then define the \emph{linearized isospectral operator} $\lop:\defspace\to \R^\N$:
\begin{align}\label{eq:LOO-def}
  \lop_\Om\nu = \left(\lfunc{\Om,0}(\nu),\lfunc{\Om,1}(\nu),\cdots,\lfunc{\Om,q}(\nu),\cdots\right).
\end{align}
In fact, $\lop$ has range in $\ell^\infty$, by definition of the functionals $\lfunc{\Om,q}$, since by~\cite[Lemma
8]{ADK}, there exists some $C > 0$ so that for any $q \ge 2$ we have $\sin\varphi_q^k\le C/q$.

We now prove that our Main Theorem is implied by the following statement
\begin{theorem} \label{t_main}%
  Let $r = \rightSmoothness$; there exists $\delta > 0$ so that the operator
  $\lop_\Om:{C^r_{\text{sym}}}\to \ell^\infty$ is injective for any $\Om\in\cS^r_\delta$.
\end{theorem}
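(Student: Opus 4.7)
The plan is to work in the modified Lazutkin coordinates $x$ on $\p\Om$ introduced in Section~\ref{s_proof}; in these coordinates the billiard map is smoothly conjugate, on the invariant region of glancing trajectories, to the translation $(x,y) \mapsto (x+y,\,y)$ modulo a flat remainder. Two consequences drive the argument: the vertices of the marked symmetric $q$-periodic orbit $S^q(\Om)$ satisfy $x_q^k = k/q + O(1/q^2)$ (with the obvious offset for odd $q$), while the reflection sines admit an asymptotic expansion of the form $\sin\varphi_q^k = \pi/q + O(1/q^3)$ uniformly for $\Om\in\cS^r_\delta$. The explicit form of these expansions, obtained in Appendix~\ref{Appendix-with-Hezari}, supplies the quantitative control needed below.

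First I would treat the disk $\D$ as a baseline, to display the linear-algebraic skeleton. Expand $\nu\in C^r_\symm$ as a cosine series $\nu(x) = \sum_{n\ge 0} a_n \cos(2\pi n x)$. Direct computation yields $\ell_{\D,0}(\nu) = 2\pi\,a_0$, $\ell_{\D,1}(\nu) = \sum_{n\ge 0} a_n$, and, for $q\ge 2$,
\[
  \ell_{\D,q}(\nu) \;=\; q\sin(\pi/q)\sum_{m\ge 0} a_{mq},
\]
since $\sum_{k=0}^{q-1}\cos(2\pi n k/q)$ equals $q$ when $q\mid n$ and vanishes otherwise. Vanishing of $\ell_{\D,0}$ forces $a_0 = 0$; vanishing of $b_q := \sum_{m\ge 1} a_{mq}$ for all $q\ge 2$ admits the Möbius inversion $a_q = \sum_{d\ge 1}\mu(d)\,b_{dq}$, which forces $a_q = 0$ for every $q\ge 2$; finally $\ell_{\D,1}(\nu) = b_1 = 0$ combined with $a_q = 0$ for $q\ge 2$ pins down $a_1 = 0$. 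Hence $\lop_\D$ is injective on $C^r_\symm$.

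For a general $\Om\in\cS^r_\delta$ the strategy is to view $\lop_\Om$ as a perturbation of $\lop_\D$ in Fourier space. The sharp Lazutkin asymptotics allow one to write
\[
  \ell_{\Om,q}(\nu) \;=\; \ell_{\D,q}(\tilde\nu) + R_q(\Om,\nu),
\]
where $\tilde\nu = J_\Om\cdot\nu$ for a smooth, $\Z_2$-symmetric multiplier $J_\Om$ built from the Lazutkin density (itself $O(\delta)$-close to a constant), and $R_q$ is a remainder controlled in terms of $\|\nu\|_{C^r}$ and $\delta$, decaying suitably in $q$. Reading $\lop_\Om\nu = 0$ off in Fourier coordinates then becomes a matrix equation $(M_\D + E)\,\hat a = 0$ on a weighted sequence space adapted to $C^r$-smoothness, in which $M_\D$ is the invertible disk operator and $E$ has norm $O(\delta)$. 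A Neumann series / contraction argument then yields $\hat a = 0$, so $\nu\equiv 0$.

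The main obstacle is the quantitative control of the off-diagonal coupling $E$: each equation $\ell_{\Om,q}(\nu) = 0$ now mixes Fourier modes $a_n$ with $n$ not a multiple of $q$, so the clean Möbius inversion of the disk case does not apply directly. The cumulative contribution of these couplings across the inversion sum $a_q = \sum_d \mu(d)\,b_{dq}$ must be shown to stay strictly smaller than the diagonal; this requires pushing the Lazutkin expansion of $\sum_k \nu(x_q^k)\sin\varphi_q^k$ to sufficiently high order in $1/q$ and crucially using the $\Z_2$-symmetry to eliminate the odd-order corrections that would otherwise obstruct the estimates. This quantitative analysis — exactly the role of the explicit computations of the Appendix with Hezari — together with the smoothness threshold $r = \rightSmoothness$, is what makes the perturbative argument close.
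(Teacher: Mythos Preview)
Your strategy --- treat the disk as a baseline where M\"obius inversion gives injectivity, then perturb --- is the same skeleton the paper uses, and your disk computation is correct (the paper's Neumann bound $\|\Delta-\id\|_\gamma<\zeta(\gamma)-1$ on the operator with matrix $\dec qj$ is the quantitative form of your M\"obius inversion).

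The gap is the claim that the perturbation $E$ has operator norm $O(\delta)$ on a single weighted sequence space. The Lazutkin expansion of $\tlfunc q(e_j)$ contains, besides the diagonal piece $(1+\Sfo0q+\beta_0 q^{-2})\dec qj$ and a genuinely small remainder $\mathcal R_{qj}$, a \emph{rank-one} term $q^{-2}\tlfunc\bullet(e_j)$, where $\tlfunc\bullet$ is a bounded functional built from the Fourier coefficients of $\alpha,\beta$ and of $\mu^2$. This term vanishes for the disk and is $O(\eps)$ as a functional, so it looks like it belongs in your $E$; but as an operator $u\mapsto \binf_\bullet\,\tlfunc\bullet(u)$ with $\binf_\bullet=(q^{-2})_q$, it does \emph{not} map $\admX$ into $h_{*,\gamma}$ for any $\gamma>2$, since $\sup_q q^{\gamma}\cdot q^{-2}|\tlfunc\bullet(u)|=\infty$ whenever $\tlfunc\bullet(u)\ne 0$. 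On the other hand the error term $\eps O(j^2 q^{-4})$ in the expansion forces $3<\gamma<4$ (one needs $\sum_j j^{2-\gamma}<\infty$ and $q^{\gamma-4}\to 0$), so you cannot lower $\gamma$ to absorb $\binf_\bullet$ without losing control of the remainder. No choice of weight makes $\|E\|_\gamma$ small, and the same obstruction reappears if you run the M\"obius sum $a_q=\sum_d\mu(d)b_{dq}$ directly: the $b_\bullet$-component of $b_{dq}$ contributes a term of size $q^{-2}\tlfunc\bullet(u)\sum_d\mu(d)d^{-2}$, again with only $q^{-2}$ decay.

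The paper's fix is to peel off this direction \emph{before} attempting invertibility: one writes
\[
\tlop \;=\; \binf_l\,\tlfunc0 \;+\; \bigl[\binf_\bullet\,\tlfunc\bullet + \tlopr\bigr]P_*,
\]
proves that $\tlopr:\admX\to h_{*,\gamma}$ is invertible by a Neumann series with $3<\gamma<4$, and then uses that $\binf_l,\binf_\bullet\in\ell^\infty\setminus h_{*,\gamma}$ are linearly independent of each other and of the range of $\tlopr$. Thus $\tlop u=0$ forces $\tlfunc0(u)=\tlfunc\bullet(u)=0$ and $\tlopr P_*u=0$ separately, whence $u=0$. In your language: you must first argue, by comparing the $q$-decay of the components of the vector equation $\tlop\nu=0$, that $\tlfunc\bullet(\nu)=0$, and only then run the contraction on the hyperplane $\ker\tlfunc\bullet$, where the rank-one obstruction disappears.
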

\begin{proof}[Proof of Theorem~\ref{t_main'}]
  Assume that $\delta$ is sufficiently small so that Theorem~\ref{t_main} holds. Suppose by contradiction that for some
  $\famt\in\parball$, we have $n_\gamma(\famt,\cdot)$ is not identically zero; hence, by Theorem~\ref{t_main} we
  conclude that there exists $q$ so that $\lfunc{\Om,q}(n_\gamma(\famt,\cdot))\ne0$; this contradicts
  Corollary~\ref{c_deform}.
\end{proof}
The rest of this paper is devoted to the proof of Theorem~\ref{t_main}.

\section{Proof of Theorem~\ref{t_main}}\label{s_proof}
Let us introduce some useful notations.
\subsection{Lazutkin coordinates} We first define a convenient parameterization of $\Om$, which is known as the \emph{Lazutkin parameterization}
(see~\cite{Lazutkin}).  Recall that the symbol $s$ denotes parameterization by arc-length; then we define the Lazutkin
parameterization, which will always be denoted by the symbol $x$, as follows:
\begin{align}
  \label{e_lazutkin}%
  x(s) &= C_\El\,\int_0^s
         \,\rho(s')^{-2/3}\ ds', &\text{where }\ C_\El
  &=\left[\int_{\partial\Om}\rho(s')^{-2/3}ds'\right]^{-1}.
\end{align}
We also introduce \emph{Lazutkin weight} as the positive function:
\begin{align} \label{correction-function}%
  \mu(x)=\frac{1}{2C_\El\rho(x)^{1/3}}.
\end{align}
The main advantage of this parametrization is that dynamical quantities related to marked symmetric (maximal) orbits
have a particularly simple form with respect to the variable $x$.
\begin{lemma}\label{l_Lazutkin}
  Assume $r \ge 8$; for any $\eps > 0$ sufficiently small, there exists $\delta > 0$ so that for any
  $\Om\in\cS^r_\delta$ there exist $C^{r-4}$ real-valued functions $\alpha(x)$ and $\beta(x)$ so that $\alpha$ is an odd
  function, $\beta$ is even, $\|\alpha\|\nc{r-4},\|\beta\|\nc{r-4} < \eps$ and, for any marked symmetric (maximal)
  $q$-periodic orbit $(x_q^0,\cdots,x_q^{q-1})$:
  \begin{subequations}\label{e_periodicLazutkinDynamics}
    \begin{align}\label{e_periodicLazutkinDynamics-x}
      x_q^k &= k/q + \frac{\alpha(k/q)}{q^2}+\eps O( q^{-4}).
    \end{align}
    Moreover, if $\varphi_q^k$ denotes the angle of reflection of the trajectory at the $k$-th collision, we have:
  \begin{align} \label{phi_k}
    \varphi_q^k &=\frac{\mu(x_q^k)}{q} \left (1+ \frac{\beta(k/q)}{q^2} + \varepsilon O(q^{-4}) \right )
  \end{align}
\end{subequations}
\end{lemma}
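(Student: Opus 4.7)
The plan is to exploit the near-integrable structure of the billiard map in the modified Lazutkin coordinates whose fine properties are set up in Appendix~\ref{s_Lazutkin}. In these coordinates, after introducing a conjugate variable $w$ related to $\sin\varphi$ through the weight $\mu$ in~\eqref{correction-function}, the billiard map takes the form $(x,w)\mapsto(x+w,w)$ up to a remainder that is $O(w^3)$ uniformly in $x$ and depends continuously on the boundary in $C^{r+1}$, vanishing when $\Om$ is a circle.  Consequently, a $q$-periodic orbit of rotation number $1/q$ must satisfy $w_q^k=1/q+O(q^{-3})$ and $x_q^k=k/q+O(q^{-2})$, and the content of the lemma is the quantitative form of the first non-trivial correction.

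To extract $\alpha$, I would rewrite the variational problem~\eqref{e_variationalProblems} in Lazutkin coordinates, write down its Euler--Lagrange equations at the maximizer, and plug in the ansatz $x_q^k=k/q+u_q^k/q^2$.  Expanding the resulting discrete equations in powers of $1/q$, the leading-order surviving relations form a linear second-order finite-difference equation for $u_q^k$ whose continuum limit as $k/q\to u\in[0,1]$ is an ODE of the form $\alpha''(u)=F_\Om(u)$, where $F_\Om$ is an explicit expression in the Taylor coefficients of the generating function at the diagonal together with $\mu$ and $\mu'$.  For the circle this forcing term vanishes identically, and its dependence on $\gamma$ is continuous in $C^{r+1}$, so $\|F_\Om\|\nc{r-6}=O(\delta)$; integrating twice and imposing the boundary conditions dictated by the marked-symmetric constraints ($x_q^0=0$ and $x_q^{q/2}=1/2$ for even $q$, or the analogue coming from~\eqref{var2} for odd $q$) fixes $\alpha$ uniquely and yields the advertised bound $\|\alpha\|\nc{r-4}<\eps$.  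The expansion for $\varphi_q^k$ then comes by reading off $w_q^k$ from $x_q^{k+1}-x_q^{k-1}=2w_q^k+O((w_q^k)^3)$ and using $\sin\varphi=w/\mu+O(w^3)$; the next correction defines $\beta$ as a specific combination of $\alpha'$ and $\mu$, giving the claimed bound on $\beta$ directly.

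The parity statements follow from $\Z_2$-symmetry: the marked symmetric orbit satisfies $x_q^{q-k}=1-x_q^k$ and $\varphi_q^{q-k}=\varphi_q^k$, which upon substitution into the expansions forces $\alpha(1-u)=-\alpha(u)$ and $\beta(1-u)=\beta(u)$, i.e.\ $\alpha$ is odd and $\beta$ even once extended $1$-periodically with the marked point at the origin.

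The main obstacle is obtaining the remainder estimate $\eps\,O(q^{-4})$ uniformly across all $q\ge 2$.  For large $q$ the argument above delivers the bound directly from the uniform form of the Lazutkin expansion.  For small $q$, where $w=1/q$ is not small, one must argue differently: Lemma~\ref{orbit} produces the marked symmetric orbit, and the implicit function theorem applied to the non-degenerate variational problem~\eqref{var1}--\eqref{var2} expresses $x_q^k$ and $\varphi_q^k$ as $C^1$ functions of $\Om$ in a $C^{r+1}$ neighborhood of the circle, so that closeness to the circle and smoothness of this dependence give the bound for the finite set of exceptional values of $q$.  A secondary technical point is the bookkeeping of derivatives along the coordinate change, the diagonal expansion of the generating function, and the passage to the continuum limit of the finite-difference equation; this bookkeeping is exactly what accounts for the gap from $C^{r+1}$ regularity of $\p\Om$ to the stated $C^{r-4}$ regularity of $\alpha$ and $\beta$, and it is all prepared in Appendix~\ref{s_Lazutkin}.
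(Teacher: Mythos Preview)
Your route is genuinely different from the paper's. The paper does \emph{not} analyze the Euler--Lagrange equations of~\eqref{e_variationalProblems} or pass to a continuum-limit ODE for the correction. Instead, Appendix~\ref{s_Lazutkin} develops an abstract normal-form machinery: starting from the involution structure of the billiard map (Lemma~\ref{l_lemmaX}), one obtains Lazutkin coordinates of order~$1$ with even remainder, then applies Lemma~\ref{l_lazutkinEvenFunctions} twice to reach order~$3$ coordinates $(x,y)$ and then order~$5$ coordinates $(\bar x,\bar y)$. In $(\bar x,\bar y)$ the dynamics is $(\bar x,\bar y)\mapsto(\bar x+\bar y,\bar y)$ up to $\eps\,O(\bar y^6)$, so the marked symmetric orbit is simply $\bar x_q^k=k/q+\eps O(q^{-4})$, $\bar y_q^k=1/q+\eps O(q^{-5})$. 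The functions $\alpha,\beta$ are then read off directly from the coordinate change $x=\bar x+\alpha(\bar x)\bar y^2$, $y=\bar y[1+\beta_2(\bar x)\bar y^2+\cdots]$ between the order-$3$ and order-$5$ charts; their parity comes from evenness of the remainder (preserved by Lemma~\ref{l_lazutkinEvenFunctions}), and the $C^{r-4}$ regularity from the two applications of that lemma, each costing two derivatives. In particular $\alpha,\beta$ are manifestly independent of $q$ and the estimates hold uniformly for all $q\ge2$ with no separate small-$q$ argument.

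Your variational approach is plausible in spirit, but there are gaps you should be aware of. First, the passage from the finite-difference equation for $u_q^k$ to the ODE $\alpha''=F_\Om$ is delicate: you need to show that the discrete solution equals $\alpha(k/q)$ up to an error that is not merely $o(1)$ but actually $\eps\,O(q^{-2})$ (so that the total error in $x_q^k$ is $\eps\,O(q^{-4})$), and this requires controlling the \emph{next} order of the expansion, effectively reproducing the order-$5$ normal form by hand. Second, your $\alpha$ is fixed by boundary conditions coming from the marked-symmetric constraints, which differ between even and odd $q$; you must verify that both yield the \emph{same} $\alpha$ (they do, since both reduce to $\alpha(0)=\alpha(1/2)=0$, consistent with oddness), but as written this is not argued. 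Third, the separate implicit-function-theorem patch for small $q$ is unnecessary once the normal form is global, and in your approach it would also need to produce the specific factor $\eps$ in front of $O(q^{-4})$, not just closeness. Finally, Appendix~\ref{s_Lazutkin} does not in fact prepare your variational bookkeeping; it prepares the normal-form iteration, so your last sentence misattributes the content of that appendix.
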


The proof of the above Lemma is given in Appendix~\ref{s_lazutkin}; it suggests that the Lazutkin parameterization is particularly well suited to study the functionals $\lfunc q$'s.

\subsection{The linearized map modified by the Lazutkin weight} It is more natural to define the auxiliary sequence
  of functionals
  \begin{align*}
    \tlfunc q(u) = \lfunc{q}(\mu\inv u)
  \end{align*}
and correspondingly define
\begin{align}\label{eq:tLOO-def}
  \tilde\lop u = \left(\tlfunc 0(u),\tlfunc{1}(u),\cdots,\tlfunc{q}(u),\cdots\right).
\end{align}
Observe since $\mu$ does not vanish, the injectivity of $\tilde\lop$ is equivalent to the injectivity of $\lop$.
However, the operator $\tilde\lop$ turns out to be more convenient to study.
It is, in fact, immediate to check (using the explicit formula~\eqref{e_lazutkin} and~\eqref{e_definitionellp}) that:
\begin{align*}
  \tlfunc0(u) &= 2\int_0^1 u(x)dx,
\end{align*}
\ie, $\tlfunc0$ is proportional to the averaging functional with respect to Lebesgue measure.  On the other
hand, $\tlfunc1(u) = \mu\inv(0)u(0)$ is the evaluation of $\mu\inv u$ at the marked point.  In
Appendix~\ref{Appendix-with-Hezari} (joint with H. Hezari) we study the properties of the functionals
$\tlfunc{k}$ which will be used in the rest of this section.

\subsection{Mapping structure of the linearized map $\tilde\lop$}  Recall that $C^r_\symm$ denotes the space of even $C^r$-functions of
$\T^1$; define the projector $P_*:C^r_\symm\to C^r_{*,\symm}$, where $C^r_{*,\symm}$ is the space of even, zero average
$C^r$-functions of $\T^1$,i.e.
\begin{align*}
  P_*\nl = \nl-\int_0^1 \nl(x)dx,
\end{align*}
where $dx$ is the Lebesgue measure with respect to the Lazutkin parameter $x$. Let $\Ls$ be the space of even, zero average, $L^1$ functions of $\T^1$, i.e.
\begin{align*}
  \Ls &= \left\{\nl\in L^1(\T^1)\st \nl(x) = \nl(-x), \int \nl(x)dx = 0\right\} \\
      &= \left\{\nl\in L^1(\T^1)\st \nl(x) = \sum_{j \ge 1}\nlf_j e_j \right\},
\end{align*}
where $\nlf_j$ denote its Fourier coefficients in the basis $B$.  We now proceed to define a space of \emph{admissible
  functions}: for $3 < \expo < 4$, define the subspace
\begin{align*}
  \admX&=\{\nl\in \Ls\st \lim_{j\to\infty}j^\gamma|\nlf_j| = 0\};
\end{align*}
equipped with the norm:
\begin{align*}
  \|\nl\|_\expo =\max_{j\ge1} j^\gamma |\nlf_j|.
\end{align*}
The space $(\admX\,,\|\cdot\|_\expo)$ is a (separable) Banach space.
\begin{remark}
  Because of our constraints on $\gamma$, we conclude that $C^3_{*,\symm}\subset \admX \subset C_{*,\symm}^2$,
  whence the functionals
  \begin{align*}
    \tlfunc q(n)=\tlfunc q\left(\sum_{j=1}^{\infty}n_je_j\right)=
    \sum_{j=1}^{\infty}n_j\tlfunc q(e_j).
  \end{align*}
  are well-defined on $\admX$, since the Fourier series converges uniformly.
\end{remark}
Notice that with our choice for the parameter $r$ we have $r > \gamma$.

Let $\ell^\infty_* = \{\binf = (a_i)_{i\ge 0}\in\ell^\infty \st a_0 = 0\}$ and let us introduce the subspace
\begin{align*}
  h_{*,\expo} = \{\binf = (a_i)_{i\ge 0}\in\ell^\infty_*\st \lim_{j\to\infty}j^\gamma a_j = 0\}
\end{align*}
equipped with the norm $|\binf|\nsob = \max_{j \ge0} j^\gamma|a_j|$.

We now state the main technical result of this section; then we show how Theorem~\ref{t_main} follows from this result
and finally provide its proof.
\begin{lemma}\label{l_decomposition}
  There exist linearly independent vectors $\binf_l,\binf_\bullet\in\ell^\infty\setminus h_{*,\expo}$ so that, for any
  $\Om\in\cS^r_\delta$ with $r = \rightSmoothness$, the operator $\tilde\lop:\defspace\to\ell^\infty$ can be
  decomposed as follows:
  \begin{align*}
    \tilde\lop = \binf_l \tlfunc0 + \left[\binf_\bullet \tlfunc\bullet + \tlopr\right] P_*,
  \end{align*}
  where $\tlopr:\admX\to h_{*,\expo}$ is an invertible operator provided that $\delta$ is sufficiently small.
\end{lemma}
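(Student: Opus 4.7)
My plan is to construct the decomposition in three stages. \emph{First}, I isolate the mean-value direction: writing $\nl = \bar\nl + P_*\nl$ with $\bar\nl := \int_0^1 \nl(x)\,dx$, linearity gives $\tilde\lop\nl = \bar\nl\,\tilde\lop(1) + \tilde\lop(P_*\nl)$. Since $\tlfunc{0}(\nl) = 2\bar\nl$, setting $\binf_l := \tfrac{1}{2}\tilde\lop(1)$ realizes the first summand as $\binf_l\,\tlfunc{0}(\nl)$. To verify $\binf_l \in \ell^\infty \setminus h_{*,\expo}$, I evaluate the $q$-th entry $\tfrac{1}{2}\lfunc{\Om,q}(\mu^{-1}) = \tfrac{1}{2}\sum_{k=0}^{q-1}\mu(x_q^k)^{-1}\sin\varphi_q^k$; by Lemma~\ref{l_Lazutkin} and $\sin\varphi_q^k = \varphi_q^k + O((\varphi_q^k)^3)$, this is $\tfrac{1}{2} + O(q^{-2})$, which does not decay in $q$.

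\emph{Second}, I Fourier-expand $P_*\nl = \sum_{j\ge 1}\nlf_j e_j$ and compute the matrix $M_{q,j} := \tlfunc{q}(e_j)$ using the Lazutkin expansion and the explicit formulas of Appendix~\ref{Appendix-with-Hezari}. At leading order the Riemann-sum approximation yields $M_{q,j}\approx\delta_{q\mid j}$, a ``summation-over-multiples'' kernel that is invertible via M\"obius inversion on the Fourier coefficients. The subleading corrections from the $\alpha,\beta$-terms in~\eqref{e_periodicLazutkinDynamics} split into: a rank-one factorable piece $b_q f_j$ whose row vector $(b_q)_q$ fails to lie in $h_{*,\expo}$ (the correction being $q$-uniform up to a weight $f_j$ in $j$), and a genuinely regular remainder $R^{\mathrm{reg}}_{q,j}$ whose associated operator maps $\admX$ into $h_{*,\expo}$ with small norm controlled by $\delta$. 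I then set $\binf_\bullet := (b_q)_q$, $\tlfunc{\bullet}(\nl) := \sum_j f_j \nlf_j$ (which is bounded on $\admX$ since $|f_j|$ grows at most polynomially while $\nlf_j$ decays like $j^{-\expo}$), and $\tlopr := \tilde\lop|_{\admX} - \binf_\bullet\,\tlfunc{\bullet}$. Linear independence of $\binf_l$ and $\binf_\bullet$ is then checked directly from the explicit leading asymptotic profiles of the two vectors.

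\emph{Third}, invertibility of $\tlopr:\admX\to h_{*,\expo}$ follows because in the Fourier representation it equals the M\"obius-invertible principal part plus a perturbation whose operator norm tends to $0$ with $\delta$; a Neumann series then produces a bounded two-sided inverse once $\delta$ is sufficiently small. The main technical obstacle is the second stage: isolating the rank-one slow-decay contribution cleanly and controlling the residue one order beyond the admissibility exponent $\expo\in(3,4)$. This requires expanding $\sum_k \mu(x_q^k)^{-1} e_j(x_q^k)\sin\varphi_q^k$ two orders past the leading Riemann sum and handling the double sum in $k$ and $j$ via a cancellation argument of stationary-phase type. The smoothness assumption $r = \rightSmoothness$ enters exactly here, since Lemma~\ref{l_Lazutkin} then provides $C^{r-4} = C^4$ control on $\alpha$ and $\beta$, which is what is needed to push the expansion deep enough that $R^{\mathrm{reg}}$ indeed lands in $h_{*,\expo}$.
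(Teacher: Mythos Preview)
Your three-stage outline matches the paper's argument: split off the constant direction via $P_*$, Fourier-analyze $\tilde\lop$ on zero-mean functions using Lemma~\ref{lem:l(e)-expansion}, peel off a rank-one piece with row vector $\binf_\bullet=(0,0,1/4,\ldots,1/q^2,\ldots)$, and show that the remaining operator $\tlopr$ is invertible by a Neumann series. Your M\"obius-inversion remark for the principal kernel $\delta_{q\mid j}$ is a valid alternative to the paper's direct bound $\|\Delta-\id\|_\gamma<\zeta(3)-1$, though either way one must control the norm of the inverse explicitly.

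There is, however, a genuine gap in your third stage. You assert that, after removing the rank-one piece, $\tlopr$ equals the M\"obius-invertible principal part plus a perturbation whose operator norm \emph{tends to zero with $\delta$}. This is false. In passing from $\varphi_q^k$ to $\sin\varphi_q^k$ one picks up the correction
\[
S_q(x)=\frac{\sin(\mu(x)/q)}{\mu(x)/q}-1\sim -\frac{\mu(x)^2}{6q^2},
\]
and since $\mu\approx\pi$ even for the circle, its zeroth Fourier coefficient $\sigma_{q,0}$ satisfies $|\sigma_{q,0}|\le(\pi+\eps)^2/(6q^2)$ and is \emph{not} small in $\delta$. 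In the matrix representation this shows up as a diagonal multiplier, so the actual principal part is $(1+\sigma_{q,0}+\beta_0/q^2)\,\delta_{q\mid j}$, not $\delta_{q\mid j}$ alone. In your second stage you only track ``the $\alpha,\beta$-terms'' from~\eqref{e_periodicLazutkinDynamics}, and in your first stage you discard the cubic $O((\varphi_q^k)^3)$ as a harmless $O(q^{-2})$---fine there, but not here, where you need control down to $O(q^{-\gamma})$ with $\gamma>3$.

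The repair is precisely what the paper does (and, as noted at the start of Appendix~\ref{Appendix-with-Hezari}, what was missing from its first arXiv version): bound the additional diagonal operator $\Delta'$ with entries $(\sigma_{q,0}+\beta_0/q^2)\delta_{q\mid j}$ explicitly and check the \emph{numerical} inequality $\|\Delta-\id\|_\gamma+\|\Delta'\|_\gamma<1$. The paper obtains $\|\Delta-\id\|_\gamma<\zeta(3)-1<0.21$ and $\|\Delta'\|_\gamma\le(\pi^2/24+O(\eps))\zeta(3)<0.51$, for a total below $0.8$; only then does the genuinely $\delta$-small remainder $\mathcal R$ finish the Neumann series. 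Without this quantitative step your argument does not close, because the perturbation you must absorb carries a $\delta$-independent piece of size roughly $1/2$.
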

\begin{proof}[{\bf Proof of Theorem~\ref{t_main}}]
  By assumptions, the vectors $\binf_l$, $\binf_\bullet$ and $\tlopr P_*\nl$ are linearly independent for any
  $\nl\in\defspace$.  Hence, if $\nl\in\ker\tlop$, then necessarily $\tlfunc0(\nl) = \tlfunc\bullet(\nl) = 0$ and
  $\tlopr P_*(\nl) = 0$.  Now, by definition, if $\tlfunc 0(\nl) = 0$, then $\nl = P_*\nl \in C^r_{*,\symm}$.  Since
  $\tlopr$ is injective and $\tlopr\nl = 0$ we thus conclude that $\nl = 0$.
\end{proof}
\begin{proof}[{\bf Proof of Lemma~\ref{l_decomposition}}]
  Let us first decompose
  \begin{align*}
    \tlop = \tlop((1-P_*)+P_*) = \tlop(1-P_*)+ \tlop_*P_*
  \end{align*}
  where $\tlop_*$ is the restriction of $\tlop$ on $C^r_{*,\symm}$.  Observe that, by definition $(1-P_*)\nl$ is the
  constant function equal to $\tlfunc0(\nl)$; we can thus set
  \begin{align*}
    \binf_l := \tlop(1) = (1,1,\cdots,1,\cdots) + \eps(0,0,O(1),\cdots, O(q^{-2}),\cdots)\in\ell^\infty.
  \end{align*}
  We thus conclude that $\tlop = \binf_l\tlfunc0+\tlop_*P_*$.  Let us now define
  \begin{align*}
    \binf_\bullet = (0,0,1/4,\cdots, 1/q^2,\cdots);
  \end{align*}
  and let $\tlopr = \tlop_* - \binf_\bullet\tlfunc\bullet$, where $\tlfunc\bullet$ is defined
  in~\eqref{e_definitionEllBullet} so that
  \begin{align*}
    \tlfunc\bullet(u)=\tlfunc\bullet\left(\sum_{j\geq 1}\hat{u}_je_j\right)=
    \sum_{j\geq 1}\hat{u}_j(\tilde\sigma_j+\beta_j-2\pi j\alpha_j),
  \end{align*}
  where $\alpha_j$ and $\beta_j$ are the Fourier coefficients of $\alpha$ and $\beta$, $\tilde\sigma_j$ is defined
  in~\eqref{e_definitionEllBullet} and $|\alpha_j|,|\beta_j|, |\tilde\sigma_j| = \eps O(j^{-r'})$, where $r' = r-4$ is
  the smoothness of $\alpha$ and $\beta$, provided $\delta$ is sufficiently small.  Clearly, $\binf_\bullet$ and
  $\binf_l$ are linearly independent and neither of them belongs to $h_{*,\gamma}$ since $\gamma > 3$.  We now consider
  the operator $\tlopr$; let $(\tilde T_{qj})_{q,j} = (\tlfunc q(e_j))_{q,j}$ denote the matrix representation of
  $\tlopr$ in the canonical basis.  We will in fact show that
  \begin{align}\label{e_invertibility}
    \|\tlopr-\id\|_\expo &< 1 &\text{if $\eps$ is sufficiently small}
  \end{align}
  where $\|\cdot\|_\expo$ is the operator norm from $(\admX,\|\cdot\|_\expo)$ to
  $(h_{*,\expo},|\cdot|_\expo)$. For any linear operator $\mathcal L:\admX\to h_{*,\expo}$
  identified by the matrix $(L_{qj})_{q,j}$, we have
  \begin{align*}
    \|\mathcal{L}\|_\expo = \sup_q\sum_{j > 0}q^\expo j^{-\expo}|L_{qj}|.
  \end{align*}

  For $q = 1$, we have by definition $\tilde T_{1j} = \tlfunc1(e_j) = 1$; on the other hand, for $q\ge2$,
  Lemma~\ref{lem:l(e)-expansion} yields the expression:
  \begin{align}
        (\tilde{\mathcal T}_{*, R})_{qj} = \left (1+\Sfo{0}{q}+ \frac{\beta_0}{q^2} \right ) \dec qj + \mathcal R_{qj},
        \label{remainder-inv}
  \end{align}
  where $\mathcal R_{qj}$ is the (matrix representation of the) remainder term.  First, we claim that the operator
  $\Delta:\admX\to h_{*,\expo}$, identified by the matrix $\dec qj$, satisfies the following bound:
  \begin{align}\label{e_firstInvertibility}
    \|\Delta- \id\|_\expo < \zeta(3)-1<0.21,
  \end{align}
  where $\zeta$ is the Riemann zeta function.  In particular,\footnote{ The value $\zeta(3)$ is also known as the
    \emph{Ap\'ery's constant}.} $\|\Delta\|_\expo<\zeta(3) < 1.21$ and has bounded inverse.  In fact, by definition, the
  norm $\|\Delta-\id\|_\expo$ is given\footnote{ Here $\delta_{qj}$ is the usual Kronecker delta notation.}  by:
  \begin{align*}
    \|\Delta-\id\|_\expo &= \sup_{q} q^\expo\sum_{j > 0}j^{-\expo}(\dec qj-\delta_{qj})
    = \sup_{q} \left[q^{\expo}\sum_{j > 0}j^{-\expo}\dec qj\right] -1\\
                         &= \sup_{q} q^{\expo}\sum_{s > 0}(sq)^{-\expo}-1 \le \sum_{s > 0}s^{-3} -1= \zeta(3)-1
  \end{align*}
  since $\expo > 2$.  In particular $\|\Delta\| < \zeta(3)$. This shows that if $\Delta'$ is the operator defined by
  \begin{align*}
    (\Delta')_{1j} &=0\\
    q \geq 2: \quad (\Delta')_{qj} &=\left (\Sfo{0}{q}+ \frac{\beta_0}{q^2} \right ) \Delta_{qj},
  \end{align*}
  then by~\eqref{e_SfoEstimates} we conclude
  \begin{align*}
    \| \Delta'\|_\expo \leq \left ( \frac{(\pi +\eps)^2}{24} + \frac{\eps}{4} \right ) \zeta(3).
  \end{align*}
  By choosing $\eps>0$ small enough we can make sure that
  \begin{align*}
    \| \Delta' \|_\expo \leq 0.51.
  \end{align*}
  We thus conclude that
  \begin{align*}
    \|\Delta+\Delta'-\id\|_\expo < \|\Delta+\Delta'-\id\|_\expo < 0.8.
  \end{align*}
  Now
  using the above expression, we prove~\eqref{e_invertibility} by showing that if $\eps$ is sufficiently small,
  $\|\mathcal R\|_\expo < C\eps$.  Recall that
  \begin{align*}
    \mathcal R_{qj} = \frac1{q^2}\sum_{\substack{s\in\Z\setminus\{0\}\\ sq\ne j}}({q^2\Sfo{sq-j}{q}+\beta_{sq-j}+2\pi ij\alpha_{sq-j}})+\eps O(j^2q^{-4}).
  \end{align*}
  Let us first check the contribution to the norm of the term $\eps O(j^2q^{-4})$:
  \begin{align*}
    \eps\sup_q \sum_{j > 0}\cO(j^{2-\expo}q^{\expo-4}).
  \end{align*}
  Since $\expo > 3$, the sum on $j$ converges, and since $\expo < 4$, the sequence to converges to $0$ as $q\to\infty$.
  We conclude that this term can be made as small as needed by choosing $\eps$ sufficiently small.
  Next, we deal with the sum: since $\alpha,\ \beta$ are $C^{r'}$-smooth functions where $r' = r-4 = 4$ and
  by~\eqref{e_SfoEstimates} we gather:
  \begin{align*}
    q^{\expo}j^{-\expo}\frac1{q^2}\left| \sum_{\substack{s\in\Z\setminus\{0\}\\sq\ne j}} ({q^2\Sfo{sq-j}{q}+\beta_{sq-j}+2\pi ij\alpha_{sq-j}})
    \right| %
    \\ < \eps O(q^{\expo-2})\sum_{\substack{s\in\Z\setminus\{0\}\\ sq\ne j}}\frac 1{|sq-j|^{r'}j^{\expo-1}}.
  \end{align*}
  Let us now estimate the sum over $j$ of the above expression.
  \begin{align*}
    q^{\expo-2}\sum_s \sum_{j > 0} &\frac 1{|sq-j|^{r'}j^{\expo-1}} \\
                                   &= q^{\expo-2}\sum_s\left[ \sum_{0 < j < [sq]^+} + \sum_{j > [sq]^+}\right] \frac 1{|sq-j|^{r'}j^{\expo-1}} = \I + \II,
  \end{align*}
  where $[sq]^+ = \max\{0,sq\}$.  Let us first consider the term $\II$; we have:
  \begin{align*}
    \II \le q^{\expo-2} \sum_{s}\frac1{|sq|^{\expo-1}}\sum_{j > sq}\frac1{|sq-j|^{r'}} =
    \frac Cq\sum_s |s|^{1-\expo} < \frac Cq.
  \end{align*}
  Then we consider term $\I$; let us write:
  \begin{align*}
    \I = q^{\expo-2}\sum_s\left[\sum_{0 < j < sq/2}+\sum_{sq/2\le j < sq}\right] \frac 1{|sq-j|^{r'}j^{\expo-1}}= \I'+\I''.
  \end{align*}
  Then
  \begin{align*}
    \I' &\le q^{\expo-2}\sum_{s > 0}\sum_{0 < j < sq/2}\frac{2^{r'}}{|sq|^{r'}}\frac1{j^{\expo-1}} < \frac C{q^{r'-\expo}}\sum_{s >
          0}\sum_{j > 0}\frac1{s^{r'}j^{1+\expo}} < \frac C{q^{r'-\expo}},\\
    \I''&\le q^{\expo-2}\sum_{s > 0}\sum_{sq/2 < j < sq}\frac1{|sq-j|^{r'}}\frac{2^{\expo-1}}{(sq)^{\expo-1}} < \frac Cq\sum_{s >
          0}\sum_{p > 0}\frac1{p^{r'}s^{\expo-1}} < \frac Cq.
  \end{align*}
  which allows to conclude since $\expo <  r' = 4$.
\end{proof}
\section{A finite dimensional reduction for arbitrary symmetric domains}\label{s_finiteDim}

In this section, we outline the proof of a more general result, which holds for $\Z_2$-symmetric domains that
are not necessarily close to a circle.  For any $\nu\in C^{r}_\symm$, let
 \begin{align*}
   \nu(x)&=\sum_{k\ge 0}\hat{\nu}_k \cos (2\pi kx)
 \end{align*}
 be its Fourier expansion.  Define the finite co-dimension space
 \begin{align*}
   C^{r}_{q_0,\symm} = \{\nu\in C^{r}_\symm  \st
   \hat{\nu}_0=\dots=\hat{\nu}_{q_0-1}=0\}.
 \end{align*}
 Moreover, define the space of admissible function
 \begin{align*}
   L_{q_0,\text{sym}}^1=\{\nu\in L^1(\T^1),\,\nu(x)=\sum_{j\geq q_0}\hat{\nu}_j \cos 2\pi jx\}
 \end{align*}
 and
 \begin{align*}
 X_{q_0,\gamma}=\{\nu\in L_{q_0,\text{sym}}^1, \text{s.t.} \lim_{j\to\infty}j^{\gamma}|\hat{\nu}_j|=0\}
 \end{align*}
 equipped with the norm $\|\nu\|_{q_0,\gamma}=\max_{j\geq q_0}j^{\gamma}|\hat{\nu}_j|$.

Define the operator $\tilde{\lop}_{q_0}:C^{r}_{q_0,\symm}\to\ell^\infty$ as
 \begin{align*}
   \tilde{\lop}_{q_0}(u) = (\tilde{\ell}_{q_0}(u),\tilde{\ell}_{q_0+1}(u),\cdots,\tilde{\ell}_{q}(u),\cdots).
 \end{align*}
 Then we can show the following
 \begin{theorem} \label{t_main_q}%
   Let $r = \rightSmoothness$, then for any domain $\Om \in S^{r}$ there exists $q_0=q_0(\Om)$ such that the operator
   $\tilde{\lop}_{q_0}:C^{r}_{\text{sym}}(q_0)\to \ell^\infty$ is injective.
 \end{theorem}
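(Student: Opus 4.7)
The plan is to adapt the proof of Theorem~\ref{t_main} by replacing the closeness-to-circle smallness parameter $\delta$ with the inverse of $q_0$. The key structural observation is that, in the Lazutkin asymptotics of Lemma~\ref{l_Lazutkin} and in the matrix representation of $\tlop$ obtained in~\eqref{remainder-inv}, the correction functions $\alpha,\beta$ enter multiplied by negative powers of $q$: $\alpha/q^2,\,\beta/q^2,\,\Sfo{0}{q} = O(1/q^2)$, etc. Hence, upon restricting attention to Fourier modes of index $\ge q_0$, every such correction carries an effective small factor $C_\Omega/q_0^a$ for some $a\ge 1$, regardless of how large the $C^{r-4}$-norms of $\alpha_\Omega,\beta_\Omega$ are individually.

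The first step is to generalize Lemma~\ref{l_Lazutkin} to arbitrary $\Omega\in\cS^r$: the same asymptotic formulas for marked symmetric maximal periodic orbits should hold, with $\alpha_\Omega,\beta_\Omega\in C^{r-4}(\T^1)$ of $\Omega$-dependent (but finite) size; inspection of the proof in Appendix~\ref{s_Lazutkin} should confirm that it uses only the positivity of the Lazutkin weight $\mu$ and the smoothness of $\p\Om$. With this, for $q,j\ge q_0$, the matrix element $\tilde T_{qj} = \tlfunc{q}(e_j)$ will continue to admit the decomposition
\[
\tilde T_{qj} = \left(1+\Sfo{0}{q}+\frac{\beta_0}{q^2}\right)\dec{q}{j}+\mathcal{R}_{qj},
\]
now with coefficient bounds depending on $\Omega$ rather than on $\eps$. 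The diagonal part $D_{qj} := (1+\Sfo{0}{q}+\beta_0/q^2)\dec{q}{j}$ should then satisfy $\|D-\id\|_{q_0,\gamma}\le (\zeta(\gamma)-1)+C_\Omega/q_0^2$, via essentially the same $\zeta$-function estimate as in Lemma~\ref{l_decomposition}; for $q_0$ large this stays strictly less than~$1$.

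Second, I would show that the remainder operator $\mathcal{R}$, restricted to rows and columns of index $\ge q_0$, satisfies $\|\mathcal{R}\|_{q_0,\gamma}\le C_\Omega/q_0$. This will proceed by reusing the three-region splitting of the proof of Lemma~\ref{l_decomposition} ($0<j<sq/2$, $sq/2\le j<sq$, $j>sq$) together with the smoothness $r' = r-4 = 4$ of $\alpha_\Omega,\beta_\Omega$; the crucial point is that each region produces a bound of the form $C_\Omega/q$, which after taking $\sup_{q\ge q_0}$ becomes $C_\Omega/q_0$. Choosing $q_0 = q_0(\Omega)$ large enough then makes the total deviation from the identity on $X_{q_0,\gamma}$ strictly less than one, so $\tlop_{q_0}$ is invertible on $X_{q_0,\gamma}$ by a Neumann series. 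Injectivity on the larger space $C^r_{q_0,\symm}$ follows from the inclusion $C^r_{q_0,\symm}\subset X_{q_0,\gamma}$, which holds since $r = \rightSmoothness > \gamma$.

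The main technical obstacle lies in this second step: carefully tracking the $q_0$-dependence of every error term so that the supremum $\sup_{q\ge q_0} q^\gamma \sum_{j\ge q_0} j^{-\gamma}|\mathcal{R}_{qj}|$ stays small despite the growing factor $q^\gamma$. The rescue, already present in the original proof, is that $\mathcal{R}_{qj}$ carries enough additional decay in $q$ to compensate: the $j < sq/2$ region produces $q^{\gamma-r'}$ with $r' = 4 > \gamma$, while the remaining regions contribute $C/q$. One must verify that all hidden constants depend on $\Omega$ only through $\|\alpha_\Omega\|_{C^{r-4}}$ and $\|\beta_\Omega\|_{C^{r-4}}$, and not on $q_0$, so that $q_0$ can genuinely be chosen as a function of $\Omega$ alone.
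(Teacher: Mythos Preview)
Your overall strategy matches the paper's, but there is a genuine gap: you have omitted the rank-one piece $\binf_\bullet\tlfunc\bullet$ from the decomposition. You write that $\tilde T_{qj}=\tlfunc q(e_j)$ admits the splitting in~\eqref{remainder-inv}; but~\eqref{remainder-inv} is the matrix of $\tlopr$, \emph{after} the subtraction $\tlopr=\tlop_*-\binf_\bullet\tlfunc\bullet$. From Lemma~\ref{lem:l(e)-expansion} one has
\[
\tlfunc q(e_j)=\left(1+\Sfo{0}{q}+\frac{\beta_0}{q^2}\right)\dec qj+\frac{\tlfunc\bullet(e_j)}{q^2}+\mathcal R_{qj},
\]
and the middle term cannot be absorbed into $\mathcal R$: its contribution to the operator norm is
\[
\sup_{q\ge q_0}q^{\gamma}\sum_{j\ge q_0}j^{-\gamma}\frac{|\tlfunc\bullet(e_j)|}{q^2}
=\Big(\sum_{j\ge q_0}j^{-\gamma}|\tlfunc\bullet(e_j)|\Big)\cdot\sup_{q\ge q_0}q^{\gamma-2}=+\infty,
\]
since $\gamma>3$. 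Equivalently, $\binf_\bullet=(1/q^2)_q\notin h_{*,\gamma}$, so $\tlop_{q_0}$ is \emph{not} a perturbation of the identity in the $\|\cdot\|_{q_0,\gamma}$ norm, and no Neumann series for $\tlop_{q_0}$ itself is available. The paper fixes this exactly as in Lemma~\ref{l_decomposition}: write $\tilde\lop_{q_0}=b_{q_0}\tlfunc\bullet+\tilde\lop_{q_0,\mathrm R}$ with $b_{q_0}=(1/q^2)_{q\ge q_0}$, prove $\tilde\lop_{q_0,\mathrm R}$ is invertible on $X_{q_0,\gamma}\to h_\gamma$ via your remainder estimate $\|\mathcal R\|_{q_0,\gamma}<C_\Omega/q_0$, and deduce injectivity of $\tilde\lop_{q_0}$ from the linear independence of $b_{q_0}$ and the range $h_\gamma$.

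A second, smaller point: your justification for extending Lemma~\ref{l_Lazutkin} to arbitrary $\Omega\in\cS^r$ is not quite right. Lemma~\ref{l_lazutkinEvenFunctions} requires $\|\rema_*\|_{C^s}$ to be \emph{small}, not merely finite; for a general domain this holds only on a strip $\{|y|<y_0(\Omega)\}$, equivalently only for $q$ larger than some $q_0(\Omega)$. The paper says exactly this: ``one can only find them in a neighborhood of $y=0$, that is, for sufficiently large $q$.'' The conclusion you need (the asymptotics for $q\ge q_0$) still follows, but the reason is the localization near $y=0$, not a global statement about the Lazutkin construction.
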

 \begin{proof}
   The proof follows the one of Theorem~\ref{t_main}: first we need to establish the existence of good Lazutkin
   coordinates; in the main case, since $\delta$ was sufficiently small, we could find Lazutkin coordinates of order $5$
   on the whole phase space; in our present context this is not guaranteed, and one can only find them in a neighborhood
   of $y = 0$, that is, for sufficiently large $q$.  Then, similar to Lemma~\ref{l_decomposition}, we have a
   decomposition for the operator $\tilde{T}_{q_0}$ as follows:
   \begin{align*}
     \tilde{T}_{q_0}=b_{q_0}\tilde{\ell}_*+\tilde{T}_{q_0,R}
   \end{align*}
   with $b_{q_0}=(1/q^2)_{q\geq q_0}$, and $\tilde{T}_{q_0,R}:X_{q_0,\gamma}\to h_{\gamma}$ is an invertible operator,
   where $h_{\gamma}=\{a=(a_i)_{i\geq 1}\in \ell^{\infty},\, \lim_{j\to\infty} j^{\gamma}|a_j|=0\}$ equipped with the
   norm $\|a\|_{\gamma}=\max_{j\geq 1}j^{\gamma}|a_j|$. The proof of the invertibility of $\tilde{T}_{q_0,R}$ follows
   that of $\tilde{T}_{*,R}$ where, without the condition of being close to a circle, $O(\eps)$ is replaced by $O(1)$
   everywhere, as defined in~\eqref{remainder-inv}, we have instead
   \begin{align*}
     \|(R_{q,j})_{q,j\geq q_0}\|_{\gamma}<C/q_0
   \end{align*}
   which hence can be made arbitrarily small when $q_0$ is large enough.
 \end{proof}

 In Definition~\ref{def:DSR} we define dynamically spectrally rigid domains. In our setting the space $\cM=\cS^r$
 consists of $\Z_2$-symmetric domains.

\begin{corollary}
  For $r\geq 8$ and any non-dynamically spectrally rigid domain $\Om\in \cS^r$ there is a linear subspace
  $\mathcal N(\Om) \ni \Om $ of dimension at most $q_0(\Om)$ such that any isospectral family
  $(\Om_\tau)_{|\tau|\le 1},\ \Om_0=\Om$ is tangent to $\mathcal N(\Om)$ for $\tau=0$.
\end{corollary}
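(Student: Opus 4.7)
The plan is to deduce this finite-dimensional reduction directly from Theorem~\ref{t_main_q} and Corollary~\ref{c_deform}, following the same pattern used in the deduction of Theorem~\ref{t_main'} from Theorem~\ref{t_main}. First I replace the given family $(\Om_\famt)_{|\famt|\le 1}$ by its normalization $(\tilde\Om_\famt)_{|\famt|\le 1}$, as in Section~\ref{s_normalized}; this does not change the length spectrum, and differs from the original family only through a $C^1$-family of isometries of $\R^2$ (a finite, in fact at most three-dimensional, source of ambiguity that can be absorbed into $\mathcal N(\Om)$ if desired). Writing $\nl := n_\gamma(0,\cdot)$ for the infinitesimal normal deformation of the normalized family at $\famt = 0$, the normalization conditions place $\nl \in C^r_\symm$, and Corollary~\ref{c_deform} yields $\lfunc{\Om,q}(\nl) = 0$ for every $q \geq 0$. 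Since $\mu$ is positive everywhere, this is equivalent to $\nl \in \ker \tilde\lop_\Om$.

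Let $q_0 = q_0(\Om)$ be the integer furnished by Theorem~\ref{t_main_q}. The key remaining step is to show that the intersection $\ker \tilde\lop_\Om \cap C^r_\symm$ is a finite-dimensional subspace of dimension at most $q_0$. To this end, consider the linear map
\begin{align*}
\Phi : \ker \tilde\lop_\Om \cap C^r_\symm \longrightarrow \R^{q_0}, \qquad \Phi(\nl) = (\hat\nl_0, \hat\nl_1, \ldots, \hat\nl_{q_0-1}),
\end{align*}
which reads off the first $q_0$ Fourier cosine coefficients. If $\Phi(\nl) = 0$ then by definition $\nl \in C^r_{q_0,\symm}$, so Theorem~\ref{t_main_q} forces $\nl = 0$. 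Hence $\Phi$ is injective and $\dim(\ker \tilde\lop_\Om \cap C^r_\symm) \le q_0$.

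Finally, I define $\mathcal N(\Om)$ as the finite-dimensional linear subspace of normal infinitesimal deformations of $\partial\Om$ corresponding to this kernel, understood through the normal-deformation parameterization near $\Om$ so that the zero deformation coincides with $\Om$ itself. The argument above shows that for every dynamically isospectral $C^1$-family through $\Om$ the tangent vector $n_\gamma(0,\cdot)$ — after normalizing and quotienting out the trivial isometric directions — lies in $\mathcal N(\Om)$, which is precisely the asserted tangency property.

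The substantive input is Theorem~\ref{t_main_q}; once it is available, the finite-dimensional reduction reduces to the soft linear-algebraic observation above. The only real obstacle is thus buried in Theorem~\ref{t_main_q} itself: away from the close-to-a-circle regime, Lemma~\ref{l_Lazutkin} only yields good Lazutkin coordinates in a neighborhood of the boundary $\{y = 0\}$ of phase space — i.e.\ only for rotation numbers $1/q$ with $q$ sufficiently large — so the global injectivity of $\tilde\lop$ degrades to injectivity of $\tilde\lop_{q_0}$ on functions whose first $q_0$ Fourier modes vanish, and exactly these $q_0$ modes span the ambient space of $\mathcal N(\Om)$.
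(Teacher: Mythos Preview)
Your argument is correct and follows essentially the same route as the paper: both use Corollary~\ref{c_deform} to place the infinitesimal deformation in the kernel of $\tilde\lop_{q_0}$, decompose along the first $q_0$ Fourier modes, and invoke the injectivity of Theorem~\ref{t_main_q} on $C^r_{q_0,\symm}$ to conclude that the low modes determine the high modes uniquely. Your phrasing via the injective map $\Phi$ is just a clean repackaging of the paper's statement that ``for each $\nu_{q_0}$ there is at most one $\nu_{q_0}^\perp$''.

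One small slip worth noting: from $\lfunc{\Om,q}(\nl)=0$ and $\tlfunc q(u)=\lfunc q(\mu^{-1}u)$ you get $\mu\nl\in\ker\tilde\lop_\Om$, not $\nl\in\ker\tilde\lop_\Om$; but since multiplication by $\mu$ is an isomorphism of $C^r_\symm$, this does not affect the dimension count or the conclusion. (The paper is equally casual on this point.)
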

\begin{proof}
Consider a family of isospectral deformations $(\Om_\tau)_{|\tau|\le 1}$. Let
$\nu \in C^r_{\symm}$ be the associated function at $\tau=0$. By Corollary \ref{c_deform}
we have that $\tilde{\lop}_{q_0}(\nu)=0$. Decompose $\nu$ into
$\nu=\nu_{q_0}+\nu_{q_0}^\perp$, where $\nu_{q_0}^\perp \in C^r_{q_0, \symm}$
is the natural projection of $\nu$ in $C^r_{\symm}$ onto this subspace
and  $\nu_{q_0}$ is the complement given by a trigonometric polynomial
of degree $<q_0$. Since $\tilde{\lop}_{q_0}(\nu)=0$, we have
\begin{align*}
\tilde{\lop}_{q_0}(\nu_{q_0}^\perp)=-\tilde{\lop}_{q_0}(\nu_{q_0}).
\end{align*}
Therefore, by Theorem \ref{t_main_q} for each $\nu_{q_0}$ there is at most one $\nu_{q_0}^\perp$ solving this equation.
If the linear spaces $\tilde{\lop}_{q_0}(C^r_{q_0, \symm})$ and the image of its orthogonal complement under
$\tilde{\lop}_{q_0}$ intersect, then any isospectral family $(\Om_\tau)_{|\tau|\le 1}$ is tangent to it.
\end{proof}


\section{Concluding remarks}\label{s_conclusions}
In this paper we proved dynamical spectral rigidity of convex domains which are $\Z_2$-symmetric and close to a circle;
it is indeed natural to ask if the same result holds if one drops some of our assumptions

\textsc{$\Z_2$-symmetry:} the main challenge in removing the symmetry assumption with our strategy is that one would
need to find another sequence of periodic orbit which generate linear functionals that are linearly independent of the
ones corresponding to maximal periodic orbits.  This appears to be a non-trivial task, since, as $q$ increases (by the
results of Appendix~\ref{s_Lazutkin}), the dynamics is closer and closer (to any order) to the dynamics of a billiard in
a disk.  On the other hand, due to the symmetries of the disk, all linear functionals corresponding to orbits of the
same rotation number would be linearly dependent.

\textsc{Closeness to a circle:} in Section~\ref{s_finiteDim} we showed that to prove our result for domains that are not
  necessarily close to a circle it would suffice to find a finite number of linearly independent functionals for small
  periods.  However, we do not have a priori any control on such orbits, and the general strategy is unclear.

  \textsc{Convexity:} as we mentioned in the introduction, our result is an analog of Guillemin-Kazhdan (see
  ~\cite{GKaz}) for $\Z_2$-symmetric domains close to the circle.  However, from the dynamical point of view,
  more natural analogs of geodesic flows on surfaces of negative curvature are dispersing billiards:
  ``\textit{Are dispersing billiards spectrally rigid?}''.  One possible approach to prove this statement
  would be to introduce and study the linearized isospectral operator analogous to~\eqref{eq:LOO-def}.


\appendix

\section{Lazutkin coordinates}\label{s_Lazutkin}
\newcommand{\smo}{s}
\newcommand{\sml}{s}
\subsection{An abstract setting}%
Let $\A = \S\times I$, where $I\subset\R$ is a compact interval; let us assume without loss of generality that
$I = [0,1]$.  We denote by $(x,y)$ the natural coordinates in $\S\times I$.  Let $F\in C^\smo(\A,\A)$ be a monotone
orientation preserving twist diffeomorphism which leaves invariant both boundary components of $\A$.  We further assume
that the circle $\{y = 0\}$ is the union of \emph{fixed} points, \ie
\begin{align*}
  F(x,0) &= (x,0)&\text{for all } x&\in\T.
\end{align*}
To avoid complications in the exposition let us assume that the other boundary
component $\{y = 1\}$ is also fixed by $F$.  Furthermore, denote with $\hat F$
a lift of $F$ to $\hat\A = \R\times I$ so that
\begin{align*}
  \hat F(X,0) &= (X,0)&\text{for all } X&\in\R.
\end{align*}
To fix ideas, we further assume that for any $X\in\R$, we have $\hat F(X,1) = (X+1,1)$.

\begin{definition}
  Let $N\in\N$; a function $\Lazf\in C^\sml(\hat\A,\R)$ is called a \emph{$C^{\sml}$-Lazutkin function of order $N$} if
  \begin{enumerate}
  \item for any $y\in I$ the map $\Lazf(\cdot,y):\R\to\R$ is a lift of an orientation preserving diffeomorphism of
    $\S$ so that $\Lazf(0,y) = 0$.
  \item there exists $\rema\in C^\sml(\hat\A,\R)$ with $\rema(x,y) = \rema_*(x,y)y^{N+1}$ and $\rema_*\in C^\sml(\hat\A,\R)$ so
    that:
    \begin{align}\label{e_lazutkinFunction}
      \Lazf\circ \hat F - 2 \Lazf + \Lazf\circ \hat F\inv = \rema.
    \end{align}
  \end{enumerate}
\end{definition}
Observe that, by definition $\rema(x,0) = 0$ and our assumptions on $\hat F$ and $\Lazf$ imply that
$\rema(x,1) = 0$ for any $x\in\T^1$.

Given a Lazutkin function $\Lazf$, define the real function $y_\Lazf:\hat A\to\R$:
\begin{align}\label{e_definitionYll}
  \yll = \frac12\left[\Lazf\circ \hat F - \Lazf\circ \hat F\inv\right].
\end{align}
Since $F$ is a twist map, $\yll(x,\cdot)$ is strictly increasing for any fixed $x$.  Moreover, by our assumptions on $F$
we gather that $\yll(x,0) = 0$ and $\yll(x,1) = 1$.  We conclude that, for any $x\in\R$, the function $y_\Lazf(x,\cdot)$
is a diffeomorphism of $I$ onto itself.  Hence, $(X_\Lazf,Y_\Lazf)$ (where we set $X_\Lazf = \Lazf(x,y)$) are good
coordinates on $\hat\A = \R\times I$, which factor to $\A$ as $(\xll,\yll)$.  Let us denote with $\Coc_{\Lazf}$ the
change of variables $\Coc_\Lazf:(x,y)\mapsto (\xll,\yll)$; notice that by design, $\Coc_\Lazf$ leaves invariant the
boundary components of $\A$.  A simple computation shows that $\Coc_\Lazf$ conjugates the map $F$ to
$\tilde F:(\xi,\eta)\mapsto(\xi^+,\eta^+)$ where:
\begin{align*}
    &\begin{cases}
      \xi^+ &= \xi+\eta+ \rema_\Lazf(\xi,\eta) \text{ mod }1\\
      \eta^+ &= \eta+ \rema_\Lazf(\xi,\eta)+ \rema_\Lazf(\xi^+,\eta^+),
  \end{cases}
            &\text{where }\rema_\Lazf &= \frac{1}{2}\rema\circ\Coc_\Lazf\inv.
\end{align*}
In particular, we can write $ \rema_{\Lazf}(\xi,\eta) = \rema_{\Lazf,*}(\xi,\eta)\eta^{N+1}$ and
$\rema_{\Lazf}(\xi,0) = \rema_{\Lazf}(\xi,1) = 0$) for any $\xi\in\T$.


\begin{lemma}[Properties of the Normal Form]\label{l_normalFormProperties}
  Assume $s\ge2$ and let $\rema\in C^\smo(\A,\R)$
be so that $\rema(\xi,0) = \rema(\xi,1) = 0$ with $\|\rema\|\nc\smo$
  sufficiently small.  Then there is a unique map $F_\rema\in C^\smo(\A,\A)$ that fixes $\{\eta = 0\}$ and so that
  $F_\rema(\xi,\eta) = (\xi^+,\eta^+)$ where:
  \begin{subequations}
    \begin{align}\label{e_LazutkinNormalForm-x}
      \begin{cases}
        \xi^+(\xi,\eta) = \xi+\eta+\rema(\xi,\eta) \pmod1\\
        \eta^+(\xi,\eta) = \eta+\rema(\xi,\eta)+\rema(\xi^+,\eta^+).
      \end{cases}
    \end{align}
    Moreover, $F_\rema(\xi,1) = (\xi,1)$ and $F_\rema\inv(\xi,\eta) = (\xi^-,\eta^-)$, where
    \begin{align}\label{e_LazutkinNormalForm-inv}
      \begin{cases}
        \xi^-(\xi,\eta)= \xi-\eta+\rema(\xi,\eta) \pmod1\\
        \eta^-(\xi,\eta)= \eta-\rema(\xi,\eta)-\rema(\xi^-,\eta^-).
      \end{cases}
    \end{align}
  \end{subequations}
\end{lemma}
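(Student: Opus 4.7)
The first equation in \eqref{e_LazutkinNormalForm-x} gives $\xi^+$ explicitly as a $C^\smo$ function of $(\xi,\eta)$. The plan is to solve the second equation as an implicit equation in $\eta^+$, and to derive everything else from the formulas \eqref{e_LazutkinNormalForm-x} and \eqref{e_LazutkinNormalForm-inv} by direct algebraic manipulation.

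First I would apply the implicit function theorem to the auxiliary function
\begin{align*}
G(\xi,\eta,\eta^+) := \eta^+ - \eta - \rema(\xi,\eta) - \rema\bigl(\xi^+(\xi,\eta),\eta^+\bigr),
\end{align*}
defined on $\A\times\R$. We have $\partial_{\eta^+}G = 1-\partial_\eta\rema(\xi^+,\eta^+)$, which is bounded away from zero whenever $\|\rema\|\nc1$ is sufficiently small; hence $G=0$ can be solved uniquely for $\eta^+ = \eta^+(\xi,\eta)$ as a $C^\smo$ function. (Equivalently, and more concretely, the map $\zeta\mapsto \eta+\rema(\xi,\eta)+\rema(\xi^+,\zeta)$ is a contraction in $\zeta$ for each fixed $(\xi,\eta)$, so the fixed point exists and is unique.) This proves both existence and uniqueness of $F_\rema$, along with $C^\smo$ regularity.

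Next I would verify the boundary identities. At $\eta=0$, the hypothesis $\rema(\xi,0)=0$ gives $\xi^+=\xi$, and the equation for $\eta^+$ reduces to $\eta^+=\rema(\xi,\eta^+)$; uniqueness near $0$ forces $\eta^+=0$, so $\{\eta=0\}$ is pointwise fixed. At $\eta=1$, using $\rema(\xi,1)=0$ we get $\xi^+\equiv \xi\pmod 1$ and $\eta^+=1+\rema(\xi,\eta^+)$; again, $\eta^+=1$ is the unique solution near $1$. Combined with the monotonicity $\partial_\eta\eta^+ > 0$ (which follows by implicit differentiation and the smallness of $\|\rema\|\nc1$), this shows that $\eta\mapsto\eta^+$ sends $[0,1]$ to $[0,1]$, so $F_\rema$ really maps $\A$ into $\A$.

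Finally, for the inverse formula, I would argue purely algebraically: given $(\xi,\eta)=F_\rema(\xi',\eta')$, the defining equations read $\xi=\xi'+\eta'+\rema(\xi',\eta')$ and $\eta=\eta'+\rema(\xi',\eta')+\rema(\xi,\eta)$. Subtracting and rearranging one obtains
\begin{align*}
\xi' &= \xi-\eta+\rema(\xi,\eta), &
\eta' &= \eta-\rema(\xi,\eta)-\rema(\xi',\eta'),
\end{align*}
which are exactly \eqref{e_LazutkinNormalForm-inv} with $(\xi^-,\eta^-)=(\xi',\eta')$. The second equation is again implicit in $\eta^-$ and is solved by the same contraction/implicit-function argument applied to $G^-(\xi,\eta,\eta^-):=\eta^- - \eta + \rema(\xi,\eta) + \rema(\xi^-,\eta^-)$. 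The main (and essentially only) obstacle is obtaining uniform control of the implicit-function step on all of $\A$; this is handled by the $C^\smo$ smallness hypothesis on $\rema$, which also guarantees that the analogous step for the inverse is solvable on all of $\A$ simultaneously.
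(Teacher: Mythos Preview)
Your proposal is correct and follows essentially the same approach as the paper: apply the Implicit Function Theorem to the relation $\eta+\rema(\xi,\eta)+\rema(\xi^+,\eta^+)-\eta^+=0$ using that $\partial_2\rema-1\ne0$, check the boundary values by uniqueness, and obtain the inverse formula by algebraic manipulation. In fact you supply more detail than the paper does (the paper leaves the inverse computation to the reader and does not spell out the monotonicity step ensuring $F_\rema$ maps $\A$ into $\A$).
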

\begin{proof}
  Since $\rema$ is fixed, observe that $\xi^+$ is an explicit well-defined function; we thus only need to show that
  there exists a unique $\eta^+(\xi,\eta)$ satisfying the required relation with initial condition  $\eta^+(\xi,\eta = 0) = 0$. It follows from the Implicit Function Theorem applying to the relation
  \[\eta + \rema(\xi,\eta)+\rema (\xi^+(\xi,\eta),\eta^+)-\eta^+=0\]
  since $\p_{2}\rema-1$ is not zero.
Then observe that $\eta^+(\xi,1) = 1$ satisfies the functional equation,
and thus, by uniqueness, we conclude that $F_\rema$ fixes
the boundaries $\{\eta=0\}$ and $\{\eta=1\}$.

  The expression~\eqref{e_LazutkinNormalForm-inv} follows from simple algebraic manipulations
  of~\eqref{e_LazutkinNormalForm-x} and it is left to the reader.
\end{proof}
We call~\eqref{e_LazutkinNormalForm-x} the \emph{Lazutkin normal form with remainder $\rema$ of order $N$ and class
  $C^\smo$} if $\rema(x,y) = \rema_*(x,y)y^{N+1}$ and $\rema_*\in C^s(\xA,\R)$.  Coordinates $(\xi,\eta)$ are said to be
\emph{Lazutkin coordinates of order $N$ for $F$} if they conjugate $F$ to a Lazutkin normal form of order $N$.

The next lemma constitutes the main result of this section: it gives sufficient conditions to find Lazutkin coordinates
of any order.
\begin{lemma}\label{l_lazutkinFunctions}
  Let $\smo\ge 3$ and assume that for some $N \ge 1$ the dynamics $F$ is described in the coordinates $(x,y)$ by the
  Lazutkin Normal Form~\eqref{e_LazutkinNormalForm-x} with remainder $\rema$ of order $N$ and class $C^s$, \ie
  $\rema(x,y) = \rema_*(x,y)y^{N+1}$ with $\rema_*\in C^s(\hat \A,\R)$.  Then, if $\|\rema_*\|\nc\smo$ is sufficiently
  small, there exists a $C^\smo$ change of variables $\Psi:(x,y)\mapsto(\bar x,\bar y)$ so that $(\bar x,\bar y)$ are
  Lazutkin coordinates of order $N+1$ with remainder $\bar \rema(\bar x,\bar y) = \bar\rema_*(x,y)y^{N+2}$ with
  $\bar\rema_*\in C^{\smo-1}(\hat\A,\R)$ and, moreover, for some universal $C_*$ we have
  \begin{enumerate}
  \item $\Psi(x,y) = (x + \Psi_0(x,y)y^{N-1},y + \Psi_1(x,y)y^{N}),$ where $\|\Psi_i\|\nc\smo < C_*\|\rema_*\|_{C^\smo}$
    for $i = 0,1$;
  \item $\|\bar \rema_*\|\nc{\smo-1}\le C_* \|\rema_*\|\nc\smo$
  \end{enumerate}
\end{lemma}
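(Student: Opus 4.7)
My plan is to prove this by a single step of a KAM-type normal-form iteration, using the explicit ansatz given in the statement. Substituting $\Psi(x,y) = \bigl(x + \Psi_0(x,y)\,y^{N-1},\, y + \Psi_1(x,y)\,y^N\bigr)$ into the conjugacy relation $\Psi \circ F = \bar F \circ \Psi$ and using the forms $x^+ - x = y + \rema_*(x,y)\,y^{N+1}$, $y^+ - y = O(y^{N+1})$, I would expand the $x$-component of the conjugacy equation in powers of $y$ to obtain
\begin{equation*}
\bar x^+ - \bar x - \bar y = \bigl[\partial_x\Psi_0(x,0) - \Psi_1(x,0)\bigr] y^N + \bigl[\rema_*(x,0) + \tfrac{1}{2}\partial_{xx}\Psi_0(x,0)\bigr] y^{N+1} + O(y^{N+2}).
\end{equation*}
For the conjugated remainder to vanish to order $y^{N+2}$, one must impose
\begin{equation*}
\Psi_1(x,0) = \partial_x \Psi_0(x,0), \qquad \partial_{xx}\Psi_0(x,0) = -2\,\rema_*(x,0),
\end{equation*}
and I would take $\Psi_0, \Psi_1$ independent of $y$ (with a smooth cutoff near $y=1$ to respect the boundary fixation), absorbing the higher-order discrepancy into the new remainder $\bar\rema$.

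The second relation above is a cohomological equation on $\S$ that I would solve by double integration, provided $\int_0^1 \rema_*(x,0)\,dx = 0$. Verifying this zero-mean condition is the central structural step: it is forced by the fact that $\rema$ arises as the second symmetric difference of a Lazutkin function along orbits of $F$ as in~\eqref{e_lazutkinFunction} — equivalently, by the exact-twist (area-preserving) nature of $F$; expanded to the relevant order in $y$, this symmetry forces $\rema_*(\cdot, 0)$ to integrate to zero around the circle of fixed points. The two integration constants are fixed by requiring $\Psi_0(\cdot, 0)$ to be periodic and to have zero average, which yields $\|\Psi_0(\cdot,0)\|_{C^s} \le C_* \|\rema_*\|_{C^s}$ (no loss of derivatives, since integration gains them), and $\Psi_1$ inherits the same estimate via its defining algebraic relation.

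With $\Psi_0, \Psi_1$ constructed, the new remainder $\bar\rema$ is defined implicitly by $\bar F = \Psi \circ F \circ \Psi^{-1}$ and, by construction, factors as $\bar\rema_*(\bar x, \bar y)\,\bar y^{N+2}$ with $\bar\rema_*$ an algebraic combination of $\rema_*$ and derivatives of $\Psi$ composed with $F$; the estimate $\|\bar\rema_*\|_{C^{s-1}} \le C_* \|\rema_*\|_{C^s}$ follows, with the loss of one derivative coming from evaluating $D\Psi$ at $F$ and from extracting the $\bar y^{N+2}$ factor. The companion equation $\bar y^+ - \bar y = \bar\rema + \bar\rema^+$ then holds automatically for $\bar F$ by the uniqueness asserted in Lemma~\ref{l_normalFormProperties}, using that $\bar F$ is conjugate to the area-preserving map $F$ and fixes the relevant boundary components. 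The main obstacle, as noted, is the verification of the zero-mean condition on $\rema_*(\cdot, 0)$ — without it the cohomological equation cannot be solved — and this is the essential place where the Lazutkin-function / area-preservation structure of $F$ enters the argument; the remaining work (estimates, extension off $\{y=0\}$, compatibility with the boundary at $\{y=1\}$) is a matter of careful but standard bookkeeping.
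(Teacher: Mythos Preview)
Your approach is essentially the paper's: both construct the coordinate change by solving a second-order equation in $x$ for the leading coefficient of the remainder, then define the new $y$-coordinate by the first-difference relation (the paper packages this via the ``Lazutkin function'' $\Lazf$ and the formula $y_\Lazf = \tfrac12(\Lazf\circ\hat F - \Lazf\circ\hat F^{-1})$, which gives exactly your $\Psi_1 = \partial_x\Psi_0$ at leading order). For $N\ge 2$ your cohomological equation $\partial_{xx}\Psi_0(\cdot,0) = -2\,\rema_*(\cdot,0)$ coincides verbatim with the paper's $\lazf'' = -2\rema_0$.

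There is, however, a genuine error in your expansion when $N=1$. In that case $y^{N-1}=1$, so $\bar x = x+\Psi_0(x)$ is \emph{not} a small-in-$y$ perturbation, and the Taylor expansion of $\Psi_0(x^+)-\Psi_0(x)$ contributes a term $\Psi_0'(x)\,\rema_0(x)\,y^2$ at the very order you are trying to kill. The correct coefficient at $y^{N+1}=y^2$ is $(1+\Psi_0')\rema_0 + \tfrac12\Psi_0''$, not $\rema_0+\tfrac12\Psi_0''$. Setting $\lazf = x+\Psi_0$, this is precisely the paper's \emph{nonlinear} ODE $2\lazf'\rema_0 + \lazf'' = 0$, which it solves explicitly via $\lazf'(x) = C\exp(-2\int_0^x\rema_0)$. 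Your linear equation would leave a residual $\Psi_0'\rema_0\,y^2$ in the new remainder, so the order would not improve. This is exactly why the paper separates the cases $N=1$ and $N>1$.

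On the zero-mean condition: your instinct is right that periodicity of $\Psi_0$ on $\S$ needs $\int_0^1\rema_*(x,0)\,dx=0$, but your justification via the second-symmetric-difference structure or area preservation is not available from the hypotheses as stated (the abstract lemma does not assume $F$ is symplectic, and the identity $\rema = \tfrac12(x^+-2x+x^-)$ does not by itself force a vanishing $x$-average of $\rema_*(\cdot,0)$). The paper likewise does not address this point explicitly; it simply imposes boundary conditions $\lazf(0)=\lazf(1)=0$ (resp.\ $\lazf(0)=0,\ \lazf(1)=1$) and asserts $\lazf$ lifts a circle diffeomorphism. In the paper's actual application (billiards, with the even/involution structure of Lemmas~\ref{l_involution}--\ref{l_lemmaX}) the issue does not arise, but as an abstract statement neither argument closes this gap.
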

\begin{proof}
  The key to the proof is to find suitable Lazutkin functions; to simplify the exposition it is convenient to treat
  separately the case $N = 1$ and $N > 1$.  First, let us set some convenient notation: let
  \begin{align*}
    \rema(x,y) =  \rema_*(x,y)y^{N+1} &=  \rema_0(x)y^{N+1}+\tilde \rema(x,y)y^{N+2}.
  \end{align*}
  where $ \rema_0(x) = \rema_*(x,0)$ is $C^\smo$ and, by definition, $\tilde \rema$ is a $C^{\smo-1}$ function so that
  $\|\tilde \rema\|\nc{\smo-1}\le\|\rema_*\|\nc\smo$.

  \textbf{Case $N = 1$:} we proceed to construct a Lazutkin function $\Lazf$ of order $2$: let us make the \emph{ansatz}
  $\Lazf(x,y) = \lazf(x)$, where $\lazf$ solves the differential equation
  \begin{align}\label{e_ansatz1}
    2\lazf'(x) \rema_0(x)+\lazf''(x) = 0
  \end{align}
  with boundary conditions $\lazf(0) = 0$ and $\lazf(1) = 1$.  Let us prove that $\Lazf$ is a Lazutkin function: first
  of all, elementary ODE considerations imply that $\lazf$ is $C^{s+2}$ and that $\lazf' > 0$, which, together with the
  boundary conditions, implies that $x\mapsto\lazf(x)$ is the lift of a circle diffeomorphism.  Moreover, by
  construction we have that $\|\lazf- \id\|\nc\smo\le\|\rema_*\|\nc\smo$.  We thus need to show that $\Lazf$
  satisfies~\eqref{e_lazutkinFunction} with a remainder of order $2$.  Fix $x\in\T$; then for any $x'\in\T$ we write
  \begin{align*}
    \lazf(x') = \lazf(x)+\lazf'(x)(x'-x)+\lazf''(x)(x'-x)^2 + \tilde\lazf(x,x')(x'-x)^3,
  \end{align*}
  where $\tilde\lazf$ is a $C^{\smo-1}$ function so that $\|\tilde\lazf\|_{C^{\smo-1}}\le\|\lazf'''\|_{C^{\smo-1}}$
  and by definition of $\lazf$, we have
$\|\lazf'''\|_{C^{\smo-1}} < C_*\|\rema_*\|_{C^\smo}$.
First, we apply the above
  expansion to $x' = x^+$; notice that:
  \begin{align*}
    (x^+-x)\phantom{^1} &= y+ \rema_0(x)y^2+\tilde  \rema_1(x,y)y^3\\
    (x^+-x)^2 &= y^2 + \tilde  \rema_2(x,y)y^3\\
    (x^+-x)^3 &= \tilde  \rema_3(x,y)y^3,
  \end{align*}
  where $\tilde \rema_1,\tilde \rema_2,\tilde \rema_3$ are $C^{s-1}$ functions so that
  $\|\tilde \rema_i\|_{C^{s-1}}\le C_*\|\rema_*\|_{C^s}$, provided that $C_*$ is sufficiently large.  Then we conclude
  that
  \begin{align*}
    \lazf(x^+) &= \lazf(x)+\lazf'(x)[y+ \rema_0(x)y^2]+\frac{\lazf''(x)}2y^2+\bar\lazf^+(x,y)y^ 3\\
               &= \lazf(x) + \lazf'(x) y + \left(\lazf'(x) \rema_0(x)+\frac{\lazf''(x)}2\right)y^2+\bar\lazf^+(x,y)y^3
                 \intertext{and by~\eqref{e_ansatz1} we conclude}
               &= \lazf(x) + \lazf'(x) y + \bar\lazf^+(x,y)y^3,
  \end{align*}
  where $\bar\lazf^+$ is $C^{\smo-1}$ and $\|\bar\lazf^+\|_{C^{s-1}}\le\|\rema_*\|_{C^{s}}$.
  Applying the same argument to $x' = x^-$ we obtain similarly
  \begin{align*}
    \lazf(x^-) &= \lazf(x) -\lazf'(x) y + \bar\lazf^-(x,y)y^3,
  \end{align*}
  where $\bar\lazf^-$ has the same properties as $\bar\lazf^+$.  We thus found that $\Lazf(x,y) = \lazf(x)$ satisfies~\eqref{e_lazutkinFunction} with
the $C^{\smo-1}$ remainder
 $$
\bar \rema(x,y) = (\bar\lazf^+(x,y)+\bar\lazf^-(x,y))y^{3}
$$
of order $2$, which concludes the proof of the case $N = 1$.

  \textbf{Case $N > 1$:} this case is similar to the previous one, but simpler.  In this case we make the ansatz
  $\Lazf(x,y) = x+\lazf(x)y^{N-1}$, where we assume that
  \begin{align}\label{e_anzatzK}
    \lazf''(x) = -2 \rema_0(x)
  \end{align}
  with boundary conditions $\lazf(0) = \lazf(1) = 0$.  Then, if $\|\rema_*\|_{C^0}$ is sufficiently small
  $\p_x\Lazf(x,y) = 1+\lazf'(x)y^{N-1} > 0$, thus $\Lazf(\cdot,y)$ is the lift of a circle diffeomorphism.  Moreover, it
  is immediate to observe that $\|\Lazf-\id\|_{C^s} = \|\lazf\|_{C^s}y^{N -1}\le\|\rema_*\|\nc\smo y^{N-1}$.  Moreover,
  we can write:
  \begin{align*}
    \Lazf(x^+,y^+) &= x+y+ \rema_0(x)y^{N+1}+\lazf(x)y^{N-1}+\lazf'(x)y^N\\
                   &\phantom = +\frac{\lazf''(x)}2y^{N+1}+\bar\lazf^+(x,y)y^{N+2} \\
                   &=  x+y+\lazf(x)y^{N-1}+ \lazf'(x)y^{N}\\
                   &\phantom = +\left( \rema_0(x)+\frac{\lazf''(x)}2\right)y^{N+1}+\bar\lazf^+(x,y)y^{N+2},\\
    \intertext{and again using our ansatz}
                   &=  x+y+\lazf(x)y^{N-1}+ \lazf'(x)y^{N} +\bar\lazf^+(x,y)y^{N+2}
  \end{align*}
  Correspondingly:
  \begin{align}
    \Lazf(x^-,y^-)  &=  x-y+\lazf(x)y^{N-1}- \lazf'(x)y^{N} +\bar\lazf^-(x,y)y^{N+2}.
  \end{align}
  We thus conclude that $\Lazf(x,y) = x+\lazf(x)y^{N-1}$ is a Lazutkin function of order $N+1$ with $C^{\smo-1}$
  remainder which we call $\bar  \rema(x,y)$.  The estimates on the norms follow
  from arguments that are similar to the case $N = 1$ and are left to the reader.
\end{proof}

 Let us define the \emph{symmetrized annulus}
$\xA = \T\times[-1,1]$ together with its universal cover $\xhA = \R\times [-1,1]$ and define the idempotent map
$\invo:(x,y)\mapsto(x,-y)$.

We now proceed to state a refinement of the above lemma, which holds under some additional assumptions.  With a little
abuse of terminology, let us say that a function $h\in C^\smo([0,1],\R)$ is even (\resp odd) if it has even (\resp odd)
extension $\eex h$ to $[-1,1]$ is of class $C^\smo$.  We say that the remainder $\rema$ is \emph{even} if for any fixed
$x\in\T$ the function $\rema(x,\cdot):I\to\R$ is even.  As we will see in the following section, this assumption will be
satisfied in our setting.    The following lemma is an immediate consequence of the definitions and of
Lemma~\ref{l_normalFormProperties}.
\begin{lemma}\label{l_involution}
  Assume $s\ge 2$ and let $\rema\in C^s(\A,\R)$ be an even function satisfying
  the hypotheses of
  Lemma~\ref{l_normalFormProperties}; then there exists a diffeomorphism
  $\xF_\rema:\xA\to\xA$ so that
  $\left.\xF_\rema\right|_\A = F_\rema$ and $\xF\circ\invo = \invo\circ(\xF)\inv$.\qed
\end{lemma}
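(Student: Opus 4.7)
The plan is to extend $\rema$ by evenness to a $C^s$ function $\xrema$ on $\xA=\T\times[-1,1]$ and then invoke (the analog of) Lemma~\ref{l_normalFormProperties} directly on the symmetrized annulus. By hypothesis $\rema(x,\cdot)$ has a $C^s$ even extension $\xrema(x,\cdot)$ to $[-1,1]$; moreover $\xrema(x,0)=\rema(x,0)=0$ and $\xrema(x,\pm1)=\rema(x,1)=0$, and the $C^s$ norm of $\xrema$ coincides with that of $\rema$, hence is small. The Implicit Function Theorem argument in the proof of Lemma~\ref{l_normalFormProperties} therefore goes through verbatim on $\xA$ (the non-degeneracy condition $\partial_\eta\xrema-1\ne0$ is preserved), producing a unique diffeomorphism $\xF_{\rema}\in C^s(\xA,\xA)$ that fixes $\{\eta=0\}$ and satisfies~\eqref{e_LazutkinNormalForm-x} with $\rema$ replaced by $\xrema$; it automatically fixes $\{\eta=\pm1\}$ and its inverse is given by the analog of~\eqref{e_LazutkinNormalForm-inv}.

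Next, I would check that $\left.\xF_{\rema}\right|_\A=F_{\rema}$. Indeed, $\xrema|_\A=\rema$, and the restriction $\left.\xF_{\rema}\right|_\A$ is a $C^s$ map fixing $\{\eta=0\}$ that satisfies the very functional equation that defines $F_{\rema}$; by the uniqueness assertion in Lemma~\ref{l_normalFormProperties} the two must coincide.

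Finally, I would verify the intertwining identity $\xF_{\rema}\circ\invo=\invo\circ\xF_{\rema}^{-1}$ by direct computation, which is where evenness is used. Writing $\xF_{\rema}(\xi,\eta)=(\xi^+,\eta^+)$ and $\xF_{\rema}^{-1}(\xi,\eta)=(\xi^-,\eta^-)$ as in~\eqref{e_LazutkinNormalForm-x}--\eqref{e_LazutkinNormalForm-inv} (with $\xrema$ in place of $\rema$), the relation $\xrema(\cdot,-\eta)=\xrema(\cdot,\eta)$ gives
\[
\xi^+(\xi,-\eta)=\xi-\eta+\xrema(\xi,\eta)=\xi^-(\xi,\eta),
\]
and then the implicit equation for $\eta^+$ at $(\xi,-\eta)$ becomes, after substituting $-\eta^-(\xi,\eta)$ for $\eta^+(\xi,-\eta)$ and again using evenness, exactly the implicit equation defining $\eta^-(\xi,\eta)$. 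By uniqueness of the solution in the Implicit Function Theorem one concludes $\eta^+(\xi,-\eta)=-\eta^-(\xi,\eta)$, which together with the previous identity is precisely $\xF_{\rema}(\invo(\xi,\eta))=\invo(\xF_{\rema}^{-1}(\xi,\eta))$.

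There is no real obstacle here: the only technical point is ensuring that the normal form construction of Lemma~\ref{l_normalFormProperties} applies on $\xA$ with the same smallness threshold, which follows because the even extension of a $C^s$ function vanishing at $y=0$ retains the same $C^s$ norm. Everything else is an algebraic verification exploiting the parity of $\xrema$.
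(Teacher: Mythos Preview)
Your argument is correct and is exactly the route the paper has in mind: the statement is marked \qed\ without proof and is described as ``an immediate consequence of the definitions and of Lemma~\ref{l_normalFormProperties}'', i.e.\ extend $\rema$ evenly to $\xA$, rerun the implicit-function construction there, and read off the intertwining relation from the explicit formulae~\eqref{e_LazutkinNormalForm-x}--\eqref{e_LazutkinNormalForm-inv} together with $\xrema(\xi,-\eta)=\xrema(\xi,\eta)$. One small wording issue: the phrase ``the even extension of a $C^s$ function vanishing at $y=0$ retains the same $C^s$ norm'' suggests that vanishing at $0$ is what guarantees a $C^s$ even extension, which is false in general; what you actually use (and what suffices) is the paper's convention that ``$\rema$ is even'' \emph{means} its even extension to $[-1,1]$ is already $C^s$, and then the sup-norms on $[0,1]$ and $[-1,1]$ agree trivially by symmetry.
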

In other terms, the map $\xF$ admits an involution, which is given by $\invo$.  Observe moreover that if $\rema$ is an
even function and $\rema(x,y) = \rema_*(x,y)y^{N+1}$, then we can always assume that $N$ is odd.  This leads to the
following version of Lemma~\ref{l_lazutkinFunctions}.
\begin{lemma}\label{l_lazutkinEvenFunctions}
  Let $\smo\ge 4$ and assume that for some $N =2K+1$ with $K\ge 0$, the dynamics $F$ is described in the coordinates
  $(x,y)$ by the Lazutkin Normal Form~\eqref{e_LazutkinNormalForm-x} with \emph{even} remainder $\rema$ of order $N$ and
  class $C^s$.  Then, if $\|\rema_*\|\nc\smo$ is sufficiently small, there exists a $C^\smo$ change of variables
  $\Psi:(x,y)\mapsto(\bar x,\bar y)$  so that $(\bar x,\bar y)$ are Lazutkin coordinates of order $N+2$ with \emph{even}
  remainder $\bar \rema$ of class $C^{\smo-2}$ so that, for some universal
  $C_*$:
  \begin{enumerate}
  \item $\Psi(x,y) = (x + \Psi_0(x,y)y^{N-1},y + \Psi_1(x,y)y^{N})$ where $\|\Psi_i\|\nc\smo < C_*\|\rema_*\|_{C^\smo}$
    for $i = 0,1$;
  \item $\|\bar \rema_*\|\nc{\smo-2}\le C_* \|\rema_*\|\nc\smo$
  \end{enumerate}
\end{lemma}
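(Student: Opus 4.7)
The plan is to reduce to Lemma~\ref{l_lazutkinFunctions} and then exploit the involutive symmetry $\xF\circ\invo = \invo\circ\xF\inv$ provided by Lemma~\ref{l_involution} to gain one extra order of vanishing for free. More precisely, I would first apply Lemma~\ref{l_lazutkinFunctions} verbatim to produce a $C^s$ change of variables $\Psi:(x,y)\mapsto(\bar x,\bar y)$ yielding Lazutkin coordinates of order $N+1$ with remainder $\bar\rema(\bar x,\bar y) = \bar\rema_*(\bar x,\bar y)\bar y^{N+2}$ of class $C^{s-1}$; the estimates $\|\Psi_0\|\nc s,\|\Psi_1\|\nc s \le C_*\|\rema_*\|\nc s$ then transfer directly from item (a) of that lemma.

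The key observation is that since $N-1$ is even, the Lazutkin function produced by the proof of Lemma~\ref{l_lazutkinFunctions}, namely $\Lazf(x,y) = x + \lazf(x)y^{N-1}$ (or $\Lazf(x,y) = \lazf(x)$ when $N=1$), is itself even in $y$. Combined with the symmetry relation $\xF\circ\invo=\invo\circ\xF\inv$ and the definition $\yll = \tfrac12[\Lazf\circ\xF - \Lazf\circ\xF\inv]$, a short direct computation shows that $\bar x = \Lazf(x,y)$ is even in $y$ while $\bar y = \yll(x,y)$ is odd in $y$; equivalently $\Psi\circ\invo = \invo\circ\Psi$. Consequently the conjugated map $\tilde F = \Psi\circ\xF\circ\Psi\inv$ still obeys $\tilde F\circ\invo = \invo\circ\tilde F\inv$. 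Reading off the algebraic form~\eqref{e_LazutkinNormalForm-x} of the Lazutkin normal form, this involutive identity is equivalent to $\bar\rema(\bar x,-\bar y) = \bar\rema(\bar x,\bar y)$, i.e.\ to the evenness of the new remainder in $\bar y$.

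A parity count then does the rest. Since $N = 2K+1$, the exponent $N+2$ is odd, so $\bar y^{N+2}$ is odd in $\bar y$; the only way the product $\bar\rema_*(\bar x,\bar y)\bar y^{N+2}$ can be even in $\bar y$ is if $\bar\rema_*(\bar x,0)\equiv 0$. Hadamard's lemma then yields $\bar\rema_*(\bar x,\bar y) = \bar y\cdot\tilde\rema_*(\bar x,\bar y)$ with $\tilde\rema_*\in C^{s-2}$, and therefore $\bar\rema(\bar x,\bar y) = \tilde\rema_*(\bar x,\bar y)\bar y^{N+3}$, exhibiting $(\bar x,\bar y)$ as Lazutkin coordinates of order $N+2$ with even remainder. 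The cost of this Hadamard division by $\bar y$ accounts for the extra derivative loss that distinguishes the present conclusion ($C^{s-2}$) from that of Lemma~\ref{l_lazutkinFunctions} ($C^{s-1}$); the norm bound $\|\bar\rema_*\|\nc{s-2}\le C_*\|\rema_*\|\nc s$ follows by combining the corresponding estimate from Lemma~\ref{l_lazutkinFunctions} with the standard Hadamard estimate.

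The only genuinely new ingredient compared to Lemma~\ref{l_lazutkinFunctions} is the parity bookkeeping in the second and third steps; this is purely algebraic and rests entirely on the involution provided by Lemma~\ref{l_involution}, so I do not expect any serious obstacle. The one point that requires care is the verification that evenness of $\Lazf$ in $y$ really propagates through the implicit definition of $\bar y$ as odd in $y$ (and hence that $\Psi$ commutes with $\invo$); this is a one-line consequence of the uniqueness statement in Lemma~\ref{l_normalFormProperties} applied to the even extension $\xF$ of $F$, but it is where the argument is most easily botched.
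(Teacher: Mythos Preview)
Your proposal is correct and follows essentially the same route as the paper's proof: apply Lemma~\ref{l_lazutkinFunctions} to reach order $N+1$, use the involution from Lemma~\ref{l_involution} together with the evenness of $\Lazf$ in $y$ to show that the coordinate change commutes with $\invo$ (hence the new remainder is even in the new variables), and then invoke the parity observation preceding the lemma to gain the extra order. The paper is terser---it leaves the parity count and the Hadamard division (which explains the extra derivative loss from $C^{s-1}$ to $C^{s-2}$) implicit---but your explicit accounting of these points is exactly what is needed to fill in the details.
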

\begin{proof}
  The proof is analogous to the one given for Lemma~\ref{l_lazutkinFunctions}; in fact following the same steps we
  obtain the needed result, except the fact that we only know that the new remainder $\bar\rema$ is an even function
  only when expressed in the old coordinates $(x,y)$.  Hence, the only thing that we need to show is that $\bar\rema$ is
  an even function also when expressed in the new coordinates $(\bar x,\bar y) = \Coc(x,y)$.  This however, follows by
  Lemma~\ref{l_involution} and the definition~\eqref{e_definitionYll}, which in turn imply that the coordinate change
  $\Coc$ commutes with the involution $\invo$, \ie $\Coc\circ\invo = \invo\circ\Coc$.  This concludes the proof.
\end{proof}
Let us summarize in words the results of this section: for any $N > 1$, if we can find Lazutkin coordinates of order $1$
with remainder that is both sufficiently smooth and sufficiently small, then we can find Lazutkin coordinates of order
$N$ and we have good control on the change of variables.
The following lemma gives sufficient conditions for existence of Lazutkin coordinates of order $1$ with even remainder.
\begin{lemma}\label{l_lemmaX}
  Assume there exists $\xF:\xA\to\xA$ of class $C^r$ so that $\xF\circ\invo = \invo\circ(\xF)\inv$ and
  $F = \left.\xF\right|_\A$.  Then there exist Lazutkin coordinates of order $1$ with even remainder of class $C^{r-2}$.
\end{lemma}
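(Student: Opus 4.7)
The plan is to exhibit a single, very simple Lazutkin function of order $1$ and let the hypothesized involution symmetry automatically produce the evenness of the remainder. The candidate is the trivial ansatz $\Lazf(x,y) := x$, viewed as a function on $\xhA$; it visibly satisfies the regularity and boundary conditions (it is a lift of $\id_{\S}$ with $\Lazf(0,y)=0$). Writing the extension in coordinates $\xF(x,y) = (X(x,y),Y(x,y))$, the hypothesis $\xF\circ\invo = \invo\circ\xF\inv$ rearranges to $\xF\inv(x,y) = (X(x,-y),-Y(x,-y))$, and since $\xF(x,0)=(x,0)$ together with the twist condition yields $X(x,y) = x + a(x)y + O(y^2)$ with $a(x)>0$, the Lazutkin equation~\eqref{e_lazutkinFunction} evaluates to
\begin{align*}
\Lazf\circ\xF - 2\Lazf + \Lazf\circ\xF\inv = X(x,y) + X(x,-y) - 2x =: \rema(x,y),
\end{align*}
which is manifestly even in $y$ and vanishes to order $y^2$. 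By Hadamard's lemma, applied twice to peel off two factors of $y$, we obtain $\rema(x,y) = \rema_*(x,y)\,y^2$ with $\rema_*\in C^{r-2}$, so $\Lazf$ is a Lazutkin function of order $1$ with even remainder of class $C^{r-2}$.

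The next step is to verify that the associated change of coordinates $\Coc_\Lazf(x,y) = (x,\yll(x,y))$, with $\yll(x,y) = \tfrac12[X(x,y)-X(x,-y)]$ as in~\eqref{e_definitionYll}, is a bona fide diffeomorphism of $\xA$ which intertwines the involution. The boundary conventions $\hat\xF(X,\pm 1) = (X\pm1,\pm1)$ yield $\yll(x,\pm 1) = \pm 1$, while $\p_y\yll = \tfrac12[\p_y X(x,y) + \p_y X(x,-y)] > 0$ by the twist condition on $\xF$, so $\yll(x,\cdot)$ is a strict $C^r$ diffeomorphism of $[-1,1]$. Finally, $\Lazf\circ\invo = \Lazf$ and $\yll\circ\invo = -\yll$ (the latter directly from the definition of $\yll$ and the identity for $\xF\inv$), so $\Coc_\Lazf\circ\invo = \invo\circ\Coc_\Lazf$.

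Putting the pieces together, the conjugated map $\Coc_\Lazf\circ\xF\circ\Coc_\Lazf\inv$ is in Lazutkin normal form with remainder $\rema_\Lazf := \tfrac12\rema\circ\Coc_\Lazf\inv$, which inherits class $C^{r-2}$ and evenness in $\eta$ from $\rema$ and from the commutation $\Coc_\Lazf\circ\invo = \invo\circ\Coc_\Lazf$. Writing $\eta = a(x)y + O(y^3)$ and inverting, one rewrites $\rema_\Lazf(\xi,\eta) = \rema_{\Lazf,*}(\xi,\eta)\,\eta^2$ with $\rema_{\Lazf,*}\in C^{r-2}$, yielding the claimed Lazutkin coordinates of order $1$ with even $C^{r-2}$ remainder. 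The only real obstacle in this argument is bookkeeping the unavoidable two-derivative loss coming from Taylor expansion to second order in $y$; the involution hypothesis is strong enough that no quantitative smallness estimate is required.
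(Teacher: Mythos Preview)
Your proposal is correct and follows the same approach as the paper: both take the trivial ansatz $\Lazf(x,y)=x$, use the involution hypothesis to identify $x^-(x,y)$ with $X(x,-y)$, and observe that $\rema(x,y)=X(x,y)+X(x,-y)-2x$ is even in $y$ and vanishes at $y=0$, hence is of order $1$ with $\rema_*\in C^{r-2}$. You add some useful bookkeeping the paper leaves implicit (the Hadamard factorization, the verification that $\Coc_\Lazf$ commutes with $\invo$ so evenness persists in the new coordinates), but this is elaboration rather than a different argument.
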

\begin{proof}
  Let us write $\xF(x,y) = (\eex x,\eex y)$ so that $x^{+}(x,y) = \eex x(x,y)$ and $x^-(x,y) = \eex x(x,-y)$.  Let us
  then choose $\Lazf(x,y) = x$; hence:
  \begin{align*}
    \Lazf\circ F(x,y)&-2\Lazf(x,y)+\Lazf\circ F\inv(x,y)\\
                     &= \eex x(x,y)-2x+\eex x(x,-y) = \rema(x,y)
  \end{align*}
  which is by construction an even function with $\rema(x,0) = 0$.  We conclude that $\rema$ is an even remainder of
  order $1$, which allows to construct Lazutkin coordinates of order $1$ with even remainder.
\end{proof}
\subsection{Application to billiard dynamics}\label{s_lazutkin}
In this section we apply the results of the previous section to the billiard map $f$ corresponding to some domain
$\Om\in\cD^r$ and prove Lemma~\ref{l_Lazutkin}

 Let us assume $\Om$ to be of perimeter $1$.  As we
mentioned in Section~\ref{s_billiard}, if $s\in\T$ denotes the arc-length parameterization of the boundary and
$y = \cos\varphi\in[-1,1]$, where $\varphi\in[0,\pi]$ is the angle of the outgoing trajectory with the positively
oriented tangent vector, the map $f_\Om = f:\T\times[-1,1]\to\T\times[-1,1]\to$ is a monotone twist map of class $C^r$.
Moreover, it is clear by the definition that $f$ fixes the boundary components $y = -1$ and $y = 1$ and that
$f(s,[-1,1])$ twists only once around the annulus.  In summary, we can apply to the map $f$ the results described in an
abstract setting in the previous section; the first step is to find Lazutkin coordinates of order $1$.

Let $s$ identify a point on the boundary of $\Om$, and let $\Line_\varphi$ be the oriented line passing through the
point $s$ with angle $\varphi\in[-\pi,\pi]$ measured counterclockwise from the positively oriented tangent to $\Om$.
Since $\Om$ is strictly convex, for any $\varphi\in(-\pi,\pi)\setminus\{0\}$ there exists a unique other point of
intersection of $\Line_\varphi$ with $\Om$; let us denote this point by $\eex s(s,\varphi)$ (and extend by continuity
$\eex s(s,0) = \eex s(s,\pi) = \eex s(s,-\pi) = s)$).  Let us moreover denote with $\eex\varphi(s,\varphi)$ the angle
between $\Line_\varphi$ and the positively oriented tangent vector to $\Om$ at $s'$, also measured counterclockwise.
Notice that if $\varphi\in[0,\pi]$, by our construction, we have $\eex s(s,\varphi) = s^+(s,\varphi)$ and
$\eex\varphi(s,\varphi) = \varphi^+(s,\varphi)$.  Observe moreover that, since $\Line_\varphi = \Line_{\pi+\varphi}$
(with reversed orientation) we also have $s^-(s,\varphi) = \eex s(s,-\varphi)$ and
$\varphi^-(s,\varphi) = -\eex\varphi(s,-\varphi)$.  In other words, if we define
$\xF(s,\varphi) = (\eex s,\eex \varphi)$, this map satisfies the hypotheses of Lemma~\ref{l_lemmaX}; hence we conclude
that there exist Lazutkin coordinates of order $1$ for the billiard with even remainder $\rema$ of class $C^{r-2}$.

As noticed earlier, if two domains $\Om$ and $\Om'$ of length $1$ are $C^{r+1}$-close, then the corresponding billiard
maps $f_\Om$ and $f_{\Om'}$ are $C^r$-close; since $\rema(s,\varphi) = \eex s(s,\varphi)-2s+\eex s(s,-\varphi)$, we
gather that the corresponding remainders will be $C^r$-close.  Since for the circle we have that $\rema = 0$, we
conclude that $\rema$ can be made arbitrarily small in the $C^r$ topology provided that $\Om$ is sufficiently
$C^{r+1}$-close to a circle.

We are now in the position to give the main result of this appendix.

\begin{proof}[\textbf{Proof of Lemma \ref{l_Lazutkin}}]

  An explicit computation (but see also~\cite[(1.1--2)]{Lazutkin}) allows to write
  \begin{align*}
    \eex s (s,\varphi) = s+ 2\rho(s)\varphi + a(s,\varphi)\varphi^2
  \end{align*}
  where $a(s,\varphi)$ is a $C^{r-2}$-smooth function.  As pointed out in Lemma \ref{l_lemmaX}, we can always find Lazutkin coordinates of
  order $1$ with even remainder;
 moreover, for any $\eps > 0$ we can choose $\delta > 0$ so that if
  $\Om\in\cD^r_\delta$, then $\|\rema(s,\varphi)\|\nc{r-2} < \eps$.

  We can then apply Lemma~\ref{l_lazutkinEvenFunctions} and obtain Lazutkin coordinates of order $3$, which we denote
  with $(x,y)$.  It is easy to check that $x$ is (unsurprisingly) given by the Lazutkin parameterization defined
  in~\eqref{e_lazutkin}.

   Using the explicit formula~\eqref{e_definitionYll} we conclude that:\footnote{ Notice that
    although the $x$ coordinate is given by the standard Lazutkin parameterization, the $y$ variable is not the
    usual Lazutkin coordinate $y$ defined in~\cite[(1.3)]{Lazutkin}}
  \begin{align*}
    y(s,\varphi) &= 2C_\Om\rho^{1/3}(s)\varphi\left[ 1 + \beta_0(s)\varphi^2+c\right],
  \end{align*}
  where $\beta_0\in C^{r-2}$ with $\|\beta_0\|\nc{r-2} < \eps$ and $O\nc{r-4}(\varphi^k)$ denotes a $C^{r-4}$ function
  of $(s,\varphi)$ whose $C^{r-4}$ norm is bounded by $C \varphi^k$ for some $C > 0$.  Inverting the above expression we
  obtain
  \begin{align}\label{e_varphiFunctionOfY}
    \varphi(x,y) = \mu(x)y\left[1+\beta_1(x) y^2+\eps O\nc{r-4}(y^4)\right]
  \end{align}

  where recall $\mu$ was defined in~\eqref{correction-function} and $\beta_1$ satisfies the same estimates as $\beta_0$.
  Finally, observe that the remainder in the coordinates $(x,y)$ is $O\nc{r-4}(\eps)$.
  By applying once again Lemma~\ref{l_lazutkinEvenFunctions} we can now obtain Lazutkin coordinates of order $5$, which
  we denote with $(\bar x,\bar y)$.  By construction such coordinates conjugate the dynamics to
  \begin{align}\label{e_dynamicsO6}
    \begin{cases}
      \bar x^+ = \bar x +\bar y + \eps\cO\nc{r-6}(y^6) \\
      \bar y^+ = \bar y + \eps\cO\nc{r-6}( y^6).
    \end{cases}
  \end{align}
  and moreover there exist $C^{r-4}$ functions $\alpha(\bar x)$ and $\beta_2(\bar x)$ so that
  \begin{align}\label{e_alphaBeta}
    x &= \bar x+\alpha(\bar x)\bar y^2
    & y &= \bar y\left[1+\beta_2(\bar x)\bar y^2 + \eps O\nc{r-6}(\bar y^4)\right].
  \end{align}
  Moreover, since the remainder in the coordinates $(x,y)$ is $O\nc{r-4}(\eps)$, we conclude that $\alpha$ and $\beta_2$
  are $O\nc{r-4}(\eps)$.   Let us now consider a marked symmetric periodic
  orbit of rotation number $1/q$ which we denote with $((\bar x_q^k,\bar y_q^k))_{k \in\{0,\cdots,q-1\}}$. Recall that
  by convention $\bar x_q^0 = 0$.  Then~\eqref{e_dynamicsO6} yields:
  \begin{align*}
    \bar x_q^k &=  \frac kq+\eps\cO(q^{-4})
    & \bar y_q^k &=  \frac 1q+\eps\cO(q^{-5}).
  \end{align*}
  Combining the above with~\eqref{e_alphaBeta} and~\eqref{e_dynamicsO6} yields
  \begin{align*}
    x_q^k = \frac kq + \frac{\alpha(k/q)}{q^2}+\eps O(q^{-4})
  \end{align*}
  which is~\eqref{e_periodicLazutkinDynamics-x}.
  We also see that by~\eqref{e_periodicLazutkinDynamics-x} and
  estimates~\eqref{e_varphiFunctionOfY} and~\eqref{e_alphaBeta} we gather that for any $\eps >0$ there exists
  $\delta >0$ so that for any $\Omega \in \cS_\delta^r$ there exists $\alpha \in C^{r-4}$ odd and $\beta_3 \in C^{r-4}$
  even such that
  \begin{align*}
    x_q^k &=\frac{k}{q}+ \frac{\alpha(k/q)}{q^2} + \varepsilon O(q^{-4}),\\
    \varphi_q^k &=\frac{\mu(x_q^k)}{q} \left (1+ \frac{\beta_3(k/q)}{q^2} + \varepsilon O(q^{-4}) \right ),
  \end{align*}
  where $\alpha=O_{r-4}(\eps)$ and $\beta_3 =O_{r-4}(\eps)$.

\end{proof}

\newpage

\section{Representation of the linearized problem in the Fourier basis}\label{Appendix-with-Hezari}
\begin{center} \small By Jacopo De Simoi, Hamid Hezari, Vadim Kaloshin, and Qiaoling Wei \end{center}
\vspace{0.1in}

The original version of this paper (arXiv:1606.00230v1) contained an error which was found and corrected by
H. Hezari (see also~\cite{He}).  This resulted in modifying the form of the operator $\tilde {\mathcal T}$,
statement and proof of Lemma \ref{lem:l(e)-expansion}, estimates in Section \ref{s_proof} in the proof of
Theorem \ref{t_main}.  These modifications naturally introduce a new term $S_q(x)$ \eqref{e_S-function} that
one has to keep track of in the proof of the main theorem.  In this joint appendix, the corrections are
combined with the original argument to produce a complete proof of Lemma \ref{lem:l(e)-expansion}.  This lemma
is used in the proof of Theorem \ref{t_main}.
\\

To study the linear functionals $\tilde{\ell_q}$ we first need to understand the expressions
$\frac{\sin\varphi_q^k}{\mu(x_q^k)}$. We recall that $\mu(x)=\frac{1}{2C_\El\rho(x)^{1/3}}$.  One can easily
check that if $\partial\Om$ is a circle, then $\mu(x)$ is a constant equal to $\pi$.  Observe moreover that
for any $\eps >0$, there exists $\delta >0$ so that for any $\Omega \in \cS_\delta^r$ we have
$\|\mu(x) -\pi \|_{C^{r-1}} < \eps$. First, using Lemma \ref{l_Lazutkin}, we will show that
\begin{align}\label{e_periodicLazutkinDynamics-phi}
    \frac{\sin\varphi_q^k}{\mu(x_q^k)}
    &=\frac1q\left[1+\frac{\beta(k/q)}{q^2} +S_q\left(\frac kq \right) + \eps O( q^{-4})\right],
  \end{align}
where
\begin{align}\label{e_S-function}
S_q(x)=
\frac{\sin \left( \mu(x)/q\right)}{\mu(x)/q}-1.
    \end{align}

To do this, we first simplify $\mu(x^k_q)$ using the asymptotic of
  $x^k_q$ provided in \eqref{e_periodicLazutkinDynamics-x}, the mean value theorem, an the fact that $\mu(x)$ is uniformly bounded from below, to obtain
\begin{equation} \label{mu}
\mu(x_q^k) = \mu \left (\frac{k}{q}+ \frac{\alpha(k/q)}{q^2} \right  ) \left ( 1 +   \varepsilon O(q^{-4})\right ).
\end{equation}
Plugging the above expression into the equation of $\varphi^k_q$  provided in \eqref{phi_k}, we get
\begin{align*}
  \varphi_q^k &= \frac{1}{q} \mu \left (\frac{k}{q}+ \frac{\alpha(k/q)}{q^2} \right  )\left (1+ \frac{\beta(k/q)}{q^2} + \varepsilon O(q^{-4}) \right ).
\end{align*}
Next, we take $\sin$ of both sides and use the mean value theorem again and also the lower bound $\sin( \mu(x) /q) \geq C/q$ to get
\begin{align*}
  \sin \varphi_q^k &=  \sin \left ( \frac{1}{q} \mu \left (\frac{k}{q}+ \frac{\alpha(k/q)}{q^2} \right  )\left (1+ \frac{\beta(k/q)}{q^2} \right ) \right ) \left(1+ \varepsilon O(q^{-4}) \right).
\end{align*}
 Dividing by $\mu(x^k_q)$, using
\eqref{mu}, and also multiplying and dividing by $1+\frac{\beta(k/q)}{q^2}$, we obtain
\begin{align*}
  \frac{ \sin \varphi_q^k}{\mu(x^k_q)} =  \frac{1}{q} \frac{\sin \left ( \frac{1}{q} \mu \left (\frac{k}{q}+ \frac{\alpha(k/q)}{q^2} \right  )\left (1+ \frac{\beta(k/q)}{q^2} \right ) \right )}{\frac{1}{q}\mu \left (\frac{k}{q}+ \frac{\alpha(k/q)}{q^2} \right  ) \left (1+\frac{\beta(k/q)}{q^2} \right ) } \left (1 + \frac{\beta(k/q)}{q^2}+  \varepsilon O(q^{-4}) \right ),
\end{align*}
On the other hand by the mean value theorem and the properties $\| \alpha \|_{C^0}, \| \beta \|_{C^0}, \| \mu' \|_{C^0} \leq C \eps$, we have
$$ \frac{1}{q} \mu \left (\frac{k}{q}+ \frac{\alpha(k/q)}{q^2} \right  )\left (1+ \frac{\beta(k/q)}{q^2} \right ) = \frac{1}{q} \mu \left (\frac{k}{q} \right) + \eps O(q^{-3})$$

Using this and the mean value theorem once more and also using $\frac{d}{dy} \left ( \frac{\sin y}{y} \right )=O(y)$,
we finally get
\begin{align*}
  \frac{ \sin \varphi_q^k}{\mu(x^k_q)} &=  \frac{1}{q} \left (\frac{ \sin \left ( \frac{\mu \left ({k}/{q} \right  )}{q}  \right )}{ \frac{\mu \left ({k}/{q} \right  )}{q} }+ \frac{\beta(k/q)}{q^2} + \varepsilon O(q^{-4}) \right )
\end{align*}
We now conclude by writing this expression in the form
\begin{equation}\label{Sq}
  \frac{ \sin \varphi_q^k}{\mu(x^k_q)} =  \frac{1}{q} \left (1+ \frac{\beta(k/q)}{q^2} + \varepsilon O(q^{-4}) \right ) + \frac{1}{q} S_q(\ifrac{k}{q}),
\end{equation}
where $S_q(x)$ is defined in~\eqref{e_S-function}; this implies~\eqref{e_periodicLazutkinDynamics-phi} as claimed.

Next, in order to obtain a useful
expression for $\tlfunc q$ for $q\ge2$, recall the definition
\begin{align*}
  \lfunc{q}(\nu) &= \frac1q\sum_{k = 0}^{q-1}\nu(x_q^k)\sin(\varphi_q^k),
\end{align*}
where $(x_q^k)_{k\in\{0,\cdots,q-1\}}$ is a marked symmetric maximal periodic orbit and
$(\varphi_q^k)_{k\in\{0,\cdots,q-1\}}$ are the corresponding reflection angles.
Using~\eqref{e_periodicLazutkinDynamics} we conclude:
\begin{align}\notag
  \tlfunc{q}(\nl) &= \lfunc{q}\left(\mu\inv\nl\right) = \frac1q\sum_{k = 0}^{q-1}u(x_q^k)\frac{\sin(\varphi_q^k)}{\mu(x_q^k)}, \\
                  &= \frac1q\sum_{k = 0}^{q-1}\nl\left(k/q +
                    \frac{\alpha(k/q)}{q^2}\right)\left(1+\frac{\beta(k/q)}{q^2}\right)+\Sop_q(\nl)+\eps O( q^{-4})\label{e_tlfuncExpression},
\end{align}
where $\Sop_q$ is the following functional
\begin{align*}
  \Sop _q (u) = \frac{1}{q} \sum_{k=0}^{q-1} S_q\left (\ifrac{k}{q} \right ) u \left (\ifrac{k}{q} + \frac{\alpha(\ifrac{k}{q})}{q^2} \right ).
\end{align*}
that is obtained by substituting in the expression for $\tlfunc{q}(\nl)$ the
expression~\eqref{e_periodicLazutkinDynamics-phi} and summing over $k$.

Let us now introduce the Fourier basis $B = (e_j)_{j\ge 0}$ of even real functions of the circle in the Lazutkin
parameterization $(e_j=\cos 2\pi jx)_{j\ge0}$ together with the convenient notation
\begin{align*}
  \dt_{q|j}=\begin{cases} 1& \textup{if $q\,|\,j$}\\ 0 &\text{otherwise.}
  \end{cases}
\end{align*}
Finally, let us write $\alpha(x) = \sum_{k\in\Z}\alpha_k\exp(ikx)$
(and similarly for $\beta$); by the parity properties of  $\alpha$ and $\beta$ we conclude:
\begin{align*}
  \alpha(x)&=\sum_{k\geq 1} 2i\alpha_k\sin 2\pi kx,
& \beta(x) &=\sum_{k\geq 0} 2\beta_k\cos 2\pi kx
\end{align*}
where $\alpha_k{ = -\alpha_{-k}}$ is purely imaginary and $\beta_k{ = \beta_{-k}}$ is real.

We now analyze the functional $\Sop_q(e_j)$; let us first record the following properties of the function $S_q$.  For
any $x$ we have
\begin{subequations}
  \begin{align}
    | S_q(x) | &\le \frac{\mu^2(x)}{6q^2}   \leq \frac{(\pi +\eps)^2}{6q^2},\label{e_firstPropertyS}\\
    | S^{(r)}_q(x) | &= \eps O( q^{-2} ), \quad r \geq 1.\label{e_secondPropertyS}
  \end{align}
\end{subequations}
  Next, by definition:
    \begin{align*}
      \Sop_q (e_j) &= \frac{1}{q} \sum_{k=0}^{q-1} S_q \left ( \ifrac{k}{q} \right)\cos \left ( 2 \pi j \left ( \ifrac{k}{q} + \frac{\alpha(k/q)}{q^2} \right )\right ) .
    \end{align*}
By the mean value theorem and~\eqref{e_firstPropertyS}, we can write:
  \begin{align*}
    \Sop_q (e_j) = \frac{1}{q} \sum_{k=0}^{q-1} \cos \left ( \frac{2 \pi j k}{q}\right ) S_q \left ( \frac{k}{q} \right) + \eps O\left(\frac{j}{q^4} \right).
  \end{align*}
  We then plug in the Fourier series of $S_q(x)$, given by
  \begin{align*}
    S_q(x) = \sum_{p \in \mathbb Z} \Sfo{p}{q} e^{2 \pi i p x},
  \end{align*}
 and obtain the expression
 \begin{align}\label{e_definitionSigmaFunctional}
   \Sop_q (e_j) &= \sum_{s \in \Z} \Sfo{sq-j}{q} + \eps O\left(\frac{j^2}{q^4}\right).
 \end{align}
 Notice that, since $S_q$ is even, we have $\Sfo pq = \Sfo{-p}q$; moreover, by the properties of $\Sop_q(x)$ we have
 \begin{subequations}\label{e_SfoEstimates}
   \begin{align}
     |\Sfo{0}{q}| & \le  \frac{(\pi +\eps)^2}{6q^2},\\
     | \sigma_p(q)  | &= \eps O\left( \frac{1}{p^{r}q^2} \right)\quad \text {for $p \neq 0$,}
   \end{align}
 \end{subequations}
where the second equation follows from integration by parts.

We can now prove the following convenient expansion
\begin{lemma}\label{lem:l(e)-expansion}
For all $q \geq 2$ and $j \geq 1$, one has
\begin{align*}
  \tilde {\ell}_q(e_j) = \left(1+ \Sfo{0}{q} + \frac{\beta_0}{q^2}\right) \delta_{q|j} + \frac{\tlfunc\bullet(e_j)}{q^2} +  \mathcal R _q(e_j),
\end{align*}
where
\begin{align}\label{e_definitionEllBullet}
  \tlfunc\bullet(e_j)  =  \tilde \sigma_j + \beta_j -2 \pi j \alpha_j ,
\text{ with }
 \tilde {\sigma}_j  = -\int_0^1 \frac{ \mu^2(x)}{6}  e^{2 \pi i j x} dx
\end{align}
and
\begin{align*}
 \mathcal R _q(e_j)=  \frac{1}{q^2}  \sum_{\substack{s\in\Z\setminus\{0\}\\ sq\ne j}} q^2 \Sfo{sq-j}{q} +  \beta_{sq-j} + 2\pi ij  \alpha_{sq-j} + \eps O\left(\frac{j^2}{q^4} \right).
\end{align*}
\end{lemma}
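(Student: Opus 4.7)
The plan is to substitute $u = e_j(x) = \cos(2\pi j x)$ into the expression \eqref{e_tlfuncExpression} for $\tilde{\ell}_q(u)$ and expand everything in Fourier series until the three-term decomposition in the lemma emerges. First, I Taylor-expand in the small perturbation $\alpha(k/q)/q^2$: using $e_j'(x) = -2\pi j \sin(2\pi j x)$, $\|e_j''\|_{C^0}\lesssim j^2$, and $\|\alpha\|_{C^0},\|\beta\|_{C^0}=O(\varepsilon)$, the mean-value theorem yields
\begin{align*}
e_j\!\left(\tfrac{k}{q} + \tfrac{\alpha(k/q)}{q^2}\right)\!\left(1 + \tfrac{\beta(k/q)}{q^2}\right)
&= e_j(k/q) + \frac{e_j(k/q)\beta(k/q)}{q^2} \\
&\quad - \frac{2\pi j\sin(2\pi j k/q)\alpha(k/q)}{q^2} + \varepsilon O\!\left(\tfrac{j^2}{q^4}\right).
\end{align*}

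Next I average each term over $k=0,\dots,q-1$ by the discrete Fourier orthogonality $\tfrac{1}{q}\sum_{k=0}^{q-1} e^{2\pi i\ell k/q}=\delta_{q|\ell}$. Expanding $\alpha(x)=\sum_p\alpha_p e^{2\pi ipx}$ and $\beta(x)=\sum_p\beta_p e^{2\pi ipx}$ and exploiting the parities $\alpha_{-p}=-\alpha_p$, $\beta_{-p}=\beta_p$, the three averages collapse to $\delta_{q|j}$, $\sum_s\beta_{sq-j}$, and $-i\sum_s\alpha_{sq-j}$ respectively. Combining with the expansion $\Sigma_q(e_j)=\sum_s\sigma_{sq-j,q}+\varepsilon O(j^2/q^4)$ from \eqref{e_definitionSigmaFunctional}, I obtain the preliminary identity
\begin{align*}
\tilde{\ell}_q(e_j) = \delta_{q|j} + \sum_{s\in\mathbb{Z}} \sigma_{sq-j,q} + \frac{1}{q^2}\sum_{s\in\mathbb{Z}}\bigl(\beta_{sq-j} + 2\pi i j\,\alpha_{sq-j}\bigr) + \varepsilon O(j^2/q^4).
\end{align*}

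The final step splits the three $s$-sums into three disjoint regimes. The index $s = j/q$ (possible only when $q\mid j$) contributes $(\sigma_{0,q} + \beta_0/q^2)\delta_{q|j}$, since $\alpha_0 = 0$; combined with the bare $\delta_{q|j}$ this yields the diagonal term $(1+\sigma_{0,q}+\beta_0/q^2)\delta_{q|j}$. For the index $s = 0$, the Taylor expansion $S_q(x) = -\mu^2(x)/(6q^2) + O(1/q^4)$ gives $\sigma_{-j,q} = \tilde{\sigma}_j/q^2 + \varepsilon O(1/q^4)$; together with $\beta_{-j}=\beta_j$ and $\alpha_{-j}=-\alpha_j$, this contribution becomes $\tilde{\ell}_\bullet(e_j)/q^2 + \varepsilon O(1/q^4)$. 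All remaining indices with $s\ne 0$ and $sq\ne j$ assemble precisely into the explicit sum defining $\mathcal{R}_q(e_j)$.

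The main technical obstacle is ensuring uniformity of the error estimates in $j$, so that the $O(1/q^4)$ correction from the higher-order Taylor remainder of $S_q$ and the $\varepsilon O(j^2/q^4)$ term from the Taylor expansion of $e_j$ can both be absorbed into the $\varepsilon O(j^2/q^4)$ tail of $\mathcal{R}_q(e_j)$. This requires the Fourier decay $|\alpha_p|,|\beta_p|,|\tilde{\sigma}_p|=\varepsilon O(|p|^{-r'})$ with $r'=r-4$, obtained by integration by parts using the $C^{r'}$ smoothness of $\alpha,\beta$ and the $C^{r-1}$ smoothness of $\mu$ guaranteed by Lemma~\ref{l_Lazutkin}; this decay is what legitimizes the interchange of sums and the rearrangement into the three regimes above.
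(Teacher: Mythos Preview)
Your proposal follows essentially the same route as the paper: Taylor-expand $e_j$ in the small shift $\alpha(k/q)/q^2$, apply discrete Fourier orthogonality to collapse the $k$-sum into a sum over $s\in\Z$, add the contribution of $\Sigma_q(e_j)$ from~\eqref{e_definitionSigmaFunctional}, and then split off the index $sq=j$ (diagonal term) and the index $s=0$ (the $\tilde\ell_\bullet$ term). The structure and the bookkeeping are correct.

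One step needs more care. You assert $\sigma_{-j,q}=\tilde\sigma_j/q^2+\varepsilon\,O(q^{-4})$, but the bare Taylor expansion $S_q(x)=-\mu^2(x)/(6q^2)+R(\mu(x)/q)$ with $R(y)=O(y^4)$ only yields a remainder $O(q^{-4})$ with \emph{no} $\varepsilon$; and later you yourself write ``the $O(1/q^4)$ correction'' and try to absorb it into $\varepsilon\,O(j^2/q^4)$, which is not automatic. The paper closes this gap by integrating $\int_0^1 R(\mu(x)/q)e^{2\pi i j x}\,dx$ by parts once: the derivative produces a factor $\mu'(x)/q$, and since $\|\mu'\|_{C^0}=O(\varepsilon)$ (because $\Omega$ is $\delta$-close to a circle) this gives the bound $\varepsilon\,O(q^{-4})$ with the required $\varepsilon$. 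Your last paragraph invokes integration by parts, but only for the decay of $\alpha_p,\beta_p,\tilde\sigma_p$ in $p$, not for this remainder; once you make that single integration by parts explicit, the argument is complete and coincides with the paper's.
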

\begin{proof}
First, we claim that:
\begin{align}\label{e_firstStep}
  \tlfunc q(e_j) &= \left(1+\Sfo{0}{q}+\frac{\beta_0}{q^2}\right)\dec qj + \frac{q^2\Sfo{j}{q}+\beta_j-2\pi ij\alpha_j}{q^2}\\
                 &\phantom = +\frac1{q^2} \sum_{\substack{s\in\Z\setminus\{0\}\\ sq\ne j}}({q^2\Sfo{sq-j}{q} +\beta_{sq-j}+2\pi ij\alpha_{sq-j}})+
\eps\cO\left(\frac{ j^2}{q^{4}}\right).\notag
\end{align}
The expression~\eqref{e_firstStep} is the result of the following simple computation: by plugging in $u = e_j$
in~\eqref{e_tlfuncExpression}, we obtain:
  \begin{align*}
    \tlfunc q(e_j) &= \frac1q\sum_{k = 0}^{q-1}\cos\left(2\pi j\left(k/q +
                            \frac{\alpha(k/q)}{q^2}\right)\right)\left(1+\frac{\beta(k/q)}{q^2}\right)+\Sop_q(e_j)+
 O\left( \frac{\eps}{q^{4}}\right)\\
                   &= \frac1q\sum_{k = 0}^{q-1}\left(\cos(2\pi jk/q)-2\pi j\sin(2\pi
                            jk/q)\frac{\alpha(k/q)}{q^2}\right)\left(1+\frac{\beta(k/q)}{q^2}\right)\\
                   &\phantom = +\Sop_q(e_j)+
 O\left( \frac{\eps j^2}{q^{4}}\right)\\
                   &= \dec qj  \\
                   &\phantom = +  \frac1{2q^3}\sum_{k = 0}^{q-1}
\sum_{p\in\Z}\Bigg[2\pi ij(\exp(2\pi ijk/q)-\exp(-2\pi
                     ijk/q))\alpha_p\exp(2\pi ipk/q)+\\
                   &\phantom{= \dec qj +  \frac1{2q^3}\sum_{k = 0}^{q-1}\sum_{p\in\Z}\Bigg[}+(\exp(2\pi ijk/q)+\exp(-2\pi
                     ijk/q))\beta_p\exp(2\pi ipk/q)\Bigg]\\
                   &\phantom = +\Sop_q(e_j)+
O\left( \frac{\eps j^2}{q^{4}}\right)& \\
                   &= \dec qj + \frac1{2q^2}\sum_{s\in\Z}\left[\beta_{sq-j}+2\pi ij\alpha_{sq-j}+\beta_{sq+j}-2\pi
                     ij\alpha_{sq+j}\right]+\Sop_q(e_j)+
 O\left( \frac{\eps j^2}{q^{4}}\right)
                     \intertext{and using the fact that $\alpha_p = -\alpha_{-p}$,
                     $\beta_p = \beta_{-p}$ and by~\eqref{e_definitionSigmaFunctional}}
                   &= \dec qj + \frac1{q^2}\sum_{s\in\Z}\left[q^2 \Sfo{sq-j}{q}+\beta_{sq-j}+2\pi ij\alpha_{sq-j}\right]+\cO\left(\frac{\eps j^2}{q^{4}}\right)\\
  \end{align*}
  which immediately implies~\eqref{e_firstStep}, In order to conclude the proof, we need to control the term $\Sfo jq$
  appearing on the second term in~\eqref{e_firstStep} in a way that is independent of $q$.  To perform this task
    we note that
    \begin{align*}
      \Sfo jq = \int_0^1 S_q(x) e^{2 \pi i j x} dx &
                                                     = \int_0^1 \left ( \frac{ \sin \left ( \frac{1}{q} \mu \left (x \right  ) \right )}{ \frac{1}{q} \mu \left ( x \right  )} -1 \right )  e^{2 \pi i j x} dx  \\
                                                   & = - \int_0^1 \frac{ \mu^2(x)}{6q^2} e^{2 \pi i j x} dx
                                                     + \int_0^1 R \left ( \frac{\mu(x)}{q} \right ) e^{2
                                                     \pi i j x}dx,
\end{align*}
where $R$ is defined by $ \frac{\sin(y)}{y} - 1 = -\frac{y^2}{6} +R(y).$ Since $R(y)=O(y^4)$ and $R'(y)=O(y^3)$, by
performing integration by parts once to the the second integral and the fact $| \mu'(x) | =\cO(\eps)$, we get
$$ \int_0^1 R \left ( \frac{\mu(x)}{q} \right )  e^{2 \pi i j x} dx =
O\left( \frac{\eps j}{q^4}\right). $$ Therefore, we can absorb this term in the remainder term $\mathcal R_q(e_j)$.  We
conclude that we can write
\begin{align*}
 \tilde {\ell}_q(e_j) = \left(1+ \Sfo{0}{q} + \frac{\beta_0}{q^2}\right) \delta_{q|j} + \frac{\tlfunc\bullet(e_j)}{q^2} +  \mathcal R _q(e_j),
\end{align*}

with
$\tlfunc\bullet(e_j) $ and $\tilde {\sigma}_j$
as in the statement of the lemma.
\end{proof}

{\it Acknowledgments.}
We thank H. Eliasson, B. Fayad, A. Figalli, J. Mather, I. Polterovich, P. Sarnak, S. Zeldich for
their most useful comments, which allowed us to vastly improve the exposition of our result.  JDS acknowledges partial
NSERC support. VK acknowledges partial support of the NSF grant DMS-1402164. QW acknowledges support of University of Maryland during her stay in College park where part of the work was done.

  \bibliographystyle{abbrv} \bibliography{dsr}

\begin{thebibliography}{10}

\bibitem{AM}
K.~G. Andersson and R.~B. Melrose.
\newblock The propagation of singularities along gliding rays.
\newblock {\em Invent. Math.}, 41(3):197--232, 1977.

\bibitem{ADK}
A.~Avila, J.~De~Simoi, and V.~Kaloshin.
\newblock An integrable deformation of an ellipse of small eccentricity is an
  ellipse.
\newblock {\em Ann. of Math. (2)}, 184(2):527--558, 2016.

\bibitem{Chaza}
J.~Chazarain.
\newblock Formule de {P}oisson pour les vari\'et\'es riemanniennes.
\newblock {\em Invent. Math.}, 24:65--82, 1974.

\bibitem{Cr}
C.~B. Croke.
\newblock Rigidity for surfaces of nonpositive curvature.
\newblock {\em Comment. Math. Helv.}, 65(1):150--169, 1990.

\bibitem{CS}
C.~B. Croke and V.~A. Sharafutdinov.
\newblock Spectral rigidity of a compact negatively curved manifold.
\newblock {\em Topology}, 37(6):1265--1273, 1998.

\bibitem{dMM}
R.~de~la Llave, J.~M. Marco, and R.~Moriy{\'o}n.
\newblock Canonical perturbation theory of {A}nosov systems and regularity
  results for the {L}iv\v sic cohomology equation.
\newblock {\em Ann. of Math. (2)}, 123(3):537--611, 1986.

\bibitem{Dui}
J.~J. Duistermaat and V.~W. Guillemin.
\newblock The spectrum of positive elliptic operators and periodic
  bicharacteristics.
\newblock {\em Invent. Math.}, 29(1):39--79, 1975.

\bibitem{GWW}
C.~Gordon, D.~L. Webb, and S.~Wolpert.
\newblock One cannot hear the shape of a drum.
\newblock {\em Bull. Amer. Math. Soc. (N.S.)}, 27(1):134--138, 1992.

\bibitem{GKaz}
V.~Guillemin and D.~Kazhdan.
\newblock Some inverse spectral results for negatively curved {$2$}-manifolds.
\newblock {\em Topology}, 19(3):301--312, 1980.

\bibitem{Halpern}
B.~Halpern.
\newblock Strange billiard tables.
\newblock {\em Trans. Amer. Math. Soc.}, 232:297--305, 1977.

\bibitem{He}
H.~Hezari.
\newblock Robin spectral rigidity of strictly convex domains with a
  reflectional symmetry.
\newblock {\em ArXiv :1609.01011}, (5):14pp, 2016.

\bibitem{HZ}
H.~Hezari and S.~Zelditch.
\newblock {$C\sp \infty$} spectral rigidity of the ellipse.
\newblock {\em Anal. PDE}, 5(5):1105--1132, 2012.

\bibitem{Ka}
M.~Kac.
\newblock Can one hear the shape of a drum?
\newblock {\em Amer. Math. Monthly}, 73(4, part II):1--23, 1966.

\bibitem{Lazutkin}
V.~F. Lazutkin.
\newblock Existence of caustics for the billiard problem in a convex domain.
\newblock {\em Izv. Akad. Nauk SSSR Ser. Mat.}, 37:186--216, 1973.

\bibitem{OPS2}
B.~Osgood, R.~Phillips, and P.~Sarnak.
\newblock Compact isospectral sets of surfaces.
\newblock {\em J. Funct. Anal.}, 80(1):212--234, 1988.

\bibitem{OPS1}
B.~Osgood, R.~Phillips, and P.~Sarnak.
\newblock Extremals of determinants of {L}aplacians.
\newblock {\em J. Funct. Anal.}, 80(1):148--211, 1988.

\bibitem{OPS3}
B.~Osgood, R.~Phillips, and P.~Sarnak.
\newblock Moduli space, heights and isospectral sets of plane domains.
\newblock {\em Ann. of Math. (2)}, 129(2):293--362, 1989.

\bibitem{Ot}
J.-P. Otal.
\newblock Le spectre marqu\'e des longueurs des surfaces \`a courbure
  n\'egative.
\newblock {\em Ann. of Math. (2)}, 131(1):151--162, 1990.

\bibitem{PS}
V.~M. Petkov and L.~N. Stoyanov.
\newblock {\em Geometry of reflecting rays and inverse spectral problems}.
\newblock Pure and Applied Mathematics (New York). John Wiley \& Sons, Ltd.,
  Chichester, 1992.

\bibitem{PT1}
G.~{Popov} and P.~{Topalov}.
\newblock {Invariants of Isospectral Deformations and Spectral Rigidity}.
\newblock {\em Communications in Partial Differential Equations},
  37(3):369--446, 2012.

\bibitem{PT}
G.~{Popov} and P.~{Topalov}.
\newblock {From K.A.M. Tori to Isospectral Invariants and Spectral Rigidity of
  Billiard Tables}.
\newblock {\em ArXiv e-prints}, Feb. 2016.

\bibitem{Sa}
P.~Sarnak.
\newblock Determinants of {L}aplacians; heights and finiteness.
\newblock In {\em Analysis, et cetera}, pages 601--622. Academic Press, Boston,
  MA, 1990.

\bibitem{Sunada}
T.~Sunada.
\newblock Riemannian coverings and isospectral manifolds.
\newblock {\em Ann. of Math. (2)}, 121(1):169--186, 1985.

\bibitem{Vi}
M.-F. Vign{\'e}ras.
\newblock Vari\'et\'es riemanniennes isospectrales et non isom\'etriques.
\newblock {\em Ann. of Math. (2)}, 112(1):21--32, 1980.

\bibitem{ZelditchSurvey04}
S.~{Zelditch}.
\newblock {Survey of the inverse spectral problem}.
\newblock {\em ArXiv Mathematics e-prints}, Feb. 2004.

\bibitem{ZelditchAnalytic}
S.~Zelditch.
\newblock Inverse spectral problem for analytic domains. {II}. {$\Bbb Z\sb
  2$}-symmetric domains.
\newblock {\em Ann. of Math. (2)}, 170(1):205--269, 2009.

\bibitem{ZelditchSurvey15}
S.~Zelditch.
\newblock Survey on the inverse spectral problem.
\newblock {\em ICCM Not.}, 2(2):1--20, 2014.

\end{thebibliography}

\end{document}